%
%
%
%
%
%
%
%
\documentclass[10pt,twoside]{article}
\usepackage{amsmath, amssymb, amsthm}
%
%
%
%
\def\YEAR{\year}\newcount\VOL\VOL=\YEAR\advance\VOL by-1995
\def\firstpage{831}\def\lastpage{866}
\def\received{}\def\revised{}
\def\communicated{}

\makeatletter
\def\magnification{\afterassignment\m@g\count@}
\def\m@g{\mag=\count@\hsize6.5truein\vsize8.9truein\dimen\footins8truein}
\makeatother

\oddsidemargin1.91cm\evensidemargin1.91cm\voffset1.4cm

\textwidth12.0cm\textheight19.0cm

\font\eightrm=cmr8
\font\caps=cmcsc10                    
\font\Caps=cmcsc10 scaled \magstep1   

%


\pagestyle{myheadings}
\pagenumbering{arabic}
\setcounter{page}{\firstpage}

\makeatletter
\setlength\topmargin {14\p@}
\setlength\headsep   {15\p@}
\setlength\footskip  {25\p@}
\setlength\parindent {20\p@}
\@specialpagefalse\headheight=8.5pt

\renewcommand{\@evenhead}{%
    \ifnum\thepage>\lastpage\rlap{\thepage}\hfill%
    \else\rlap{\thepage}\slshape\leftmark\hfill{\caps\SAuthor}\hfill\fi}%
\renewcommand{\@oddhead}{%
    \ifnum\thepage=\firstpage{\hfill\llap{\thepage}}%
    \else{\slshape\rightmark}\hfill{\caps\STitle}\hfill\llap{\thepage}\fi}%
\makeatother

\def\TSkip{\bigskip}
\newbox\TheTitle{\obeylines\gdef\GetTitle #1
\ShortTitle  #2
\SubTitle    #3
\Author      #4
\ShortAuthor #5
\EndTitle
{\setbox\TheTitle=\vbox{\baselineskip=20pt\let\par=\cr\obeylines%
\halign{\centerline{\Caps##}\cr\noalign{\medskip}\cr#1\cr}}%
	\copy\TheTitle\TSkip\TSkip%
\def\next{#2}\ifx\next\empty\gdef\STitle{#1}\else\gdef\STitle{#2}\fi%
\def\next{#3}\ifx\next\empty%
    \else\setbox\TheTitle=\vbox{\baselineskip=20pt\let\par=\cr\obeylines%
    \halign{\centerline{\caps##} #3\cr}}\copy\TheTitle\TSkip\TSkip\fi%
\centerline{\caps #4}\TSkip\TSkip%
\def\next{#5}\ifx\next\empty\gdef\SAuthor{#4}\else\gdef\SAuthor{#5}\fi%
\ifx\received\empty\relax
    \else\centerline{\eightrm Received: \received}\fi%
\ifx\revised\empty\TSkip%
    \else\centerline{\eightrm Revised: \revised}\TSkip\fi%
\ifx\communicated\empty\relax
    \else\centerline{\eightrm Communicated by \communicated}\fi\TSkip\TSkip%
\catcode'015=5}}\def\Title{\obeylines\GetTitle}
\def\Abstract{\begingroup\narrower
    \parskip=\medskipamount\parindent=0pt{\caps Abstract. }}
\def\EndAbstract{\par\endgroup\TSkip}

\long\def\MSC#1\EndMSC{\def\arg{#1}\ifx\arg\empty\relax\else
     {\par\narrower\noindent%
     2010 Mathematics Subject Classification: #1\par}\fi}

\long\def\KEY#1\EndKEY{\def\arg{#1}\ifx\arg\empty\relax\else
	{\par\narrower\noindent Keywords and Phrases: #1\par}\fi\TSkip}

\long\def\THANKS#1\EndTHANKS{\def\arg{#1}\ifx\arg\empty\relax\else
	{\par\narrower\noindent#1\par}\fi\TSkip}

\newbox\TheAdd\def\Addresses{\vfill\copy\TheAdd\vfill
    \ifodd\number\lastpage\vfill\eject\phantom{.}\vfill\eject\fi}
{\obeylines\gdef\GetAddress #1
\Address #2
\Address #3
\Address #4
\EndAddress
{\def\xs{4.7truecm}\parindent=0pt
\setbox0=\vtop{{\obeylines\hsize=\xs#1\par}}\def\next{#2}
\ifx\next\empty 
     \setbox\TheAdd=\hbox to\hsize{\hfill\copy0\hfill}
\else\setbox1=\vtop{{\obeylines\hsize=\xs#2\par}}\def\next{#3}
\ifx\next\empty 
     \setbox\TheAdd=\hbox to\hsize{\hfill\copy0\hfill\copy1\hfill}
\else\setbox2=\vtop{{\obeylines\hsize=\xs#3\par}}\def\next{#4}
\ifx\next\empty\ 
     \setbox\TheAdd=\vtop{\hbox to\hsize{\hfill\copy0\hfill\copy1\hfill}
                \vskip20pt\hbox to\hsize{\hfill\copy2\hfill}}
\else\setbox3=\vtop{{\obeylines\hsize=\xs#4\par}}
     \setbox\TheAdd=\vtop{\hbox to\hsize{\hfill\copy0\hfill\copy1\hfill}
	        \vskip20pt\hbox to\hsize{\hfill\copy2\hfill\copy3\hfill}}
\fi\fi\fi\catcode'015=5}}\gdef\Address{\obeylines\GetAddress}

\hfuzz=0.1pt\tolerance=2000\emergencystretch=20pt\overfullrule=5pt
\newtheorem{thm}{Theorem}[section]
\newtheorem{cor}[thm]{Corollary}
\newtheorem{lem}[thm]{Lemma}
\newtheorem{prop}[thm]{Proposition}

\theoremstyle{definition}
\newtheorem{dfn}[thm]{Definition}

\newtheorem{ntn}[thm]{Notation}

\theoremstyle{remark}
\newtheorem{rmk}[thm]{Remark}

\numberwithin{equation}{section}

\newcommand{\CC}{\mathbb{C}}
\newcommand{\NN}{\mathbb{N}}

\newcommand{\TT}{\mathbb{T}}
\newcommand{\ZZ}{\mathbb{Z}}
\newcommand{\RR}{\mathbb{R}}

\newcommand{\Bb}{\mathcal{B}}
\newcommand{\Ee}{\mathcal{E}}
\newcommand{\Ff}{\mathcal{F}}
\newcommand{\Hh}{\mathcal{H}}
\newcommand{\Kk}{\mathcal{K}}
\newcommand{\Mm}{\mathcal{M}}
\newcommand{\Tt}{\mathcal{T}}
\newcommand{\Zz}{\mathcal{Z}}

\newcommand{\Ad}{\operatorname{Ad}}
\newcommand{\Aut}{\operatorname{Aut}}
\newcommand{\clsp}{\overline{\lsp}}
\newcommand{\Ext}{\operatorname{Ext}}
\newcommand{\FE}{\operatorname{FE}}
\newcommand{\Hper}{H_{\Per}}
\newcommand{\id}{\operatorname{id}}

\newcommand{\lsp}{\operatorname{span}}
\newcommand{\MCE}{\operatorname{MCE}}
\newcommand{\Obj}{\operatorname{Obj}}
\newcommand{\Per}{\operatorname{Per}}
\newcommand{\SHS}[1]{\operatorname{\mbox{$\mathrm{SH}_{#1}{\times}S$}}}
\newcommand{\tE}{\tilde{E}}
\newcommand{\tF}{\tilde{F}}

\newcommand{\Zcat}[1]{\underline{Z}^{#1}}
\newcommand{\dcat}[1]{\underline{\delta}^{#1}}


\begin{document}
\Title Twisted $C^*$-Algebras Associated
to Finitely Aligned Higher-Rank Graphs
\ShortTitle Twisted $C^*$-Algebras of Finitely Aligned $k$-Graphs
\SubTitle
\Author Aidan Sims, Benjamin Whitehead, and Michael F. Whittaker
\ShortAuthor Sims, Whitehead, and Whittaker
\EndTitle
\Abstract We introduce twisted relative Cuntz-Krieger algebras associated to finitely
aligned higher-rank graphs and give a comprehensive treatment of their fundamental
structural properties. We establish versions of the usual uniqueness theorems and the
classification of gauge-invariant ideals. We show that all twisted relative Cuntz-Krieger
algebras associated to finitely aligned higher-rank graphs are nuclear and satisfy the
UCT, and that for twists that lift to real-valued cocycles, the $K$-theory of a twisted
relative Cuntz-Krieger algebra is independent of the twist. In the final section, we
identify a sufficient condition for simplicity of twisted Cuntz-Krieger algebras
associated to higher-rank graphs which are not aperiodic. Our results indicate that this
question is significantly more complicated than in the untwisted setting.%
\EndAbstract
\MSC 46L05%
\EndMSC
\KEY $C^*$-algebra; graph algebra; Cuntz-Krieger algebra
\EndKEY
\THANKS This research was supported by the Australian Research Council.
\EndTHANKS
\Address Aidan Sims,
Benjamin Whitehead,
Michael F. Whittaker
School of Mathematics
\ and Applied Statistics
The University of Wollongong
NSW  2522
AUSTRALIA
\Address
\Address
\Address
\EndAddress 

\section{Introduction}

In \cite{KumjianPask:NYJM00}, Kumjian and Pask introduced higher-rank graphs (or
$k$-graphs) and their $C^*$-algebras. They considered only higher-rank graphs which are
\emph{row-finite} and have \emph{no sources}, the same simplifying assumptions that were
made in the first papers on graph $C^*$-algebras \cite{KumjianPaskEtAl:JFA97,
KumjianPaskEtAl:PJM98, BatesPaskEtAl:NYJM00}. The theory was later expanded
\cite{RaeburnSimsEtAl:JFA04} to include the more general finitely aligned $k$-graphs; in
this case the $C^*$-algebraic relations that describe the Cuntz-Krieger algebra were
determined by first analysing an associated Toeplitz algebra \cite{RaeburnSims:JOT05}.
Interpolating between the Toeplitz algebra and the Cuntz-Krieger algebra are the relative
Cuntz-Krieger algebras, which were introduced in \cite{Sims:IUMJ06}, and then used in
\cite{Sims:CJM06} to determine the gauge-invariant-ideal structure of the Cuntz-Krieger
algebras of finitely aligned $k$-graphs. Simplicity of the Cuntz-Krieger algebra of a
finitely aligned higher-rank graph was completely characterised in
\cite{LewinSims:MPCPS10}.

In \cite{KumjianPaskEtAl:TAMSxx}, Kumjian, Pask and Sims studied the structure theory of
the twisted Cuntz-Krieger algebra $C^*(\Lambda, c)$ associated to a row-finite
higher-rank graph $\Lambda$ with no sources and a $\TT$-valued categorical $2$-cocycle
$c$ on $\Lambda$. They subsequently proved \cite{KumjianPaskEtAl:JMAA13} that for
cocycles $c$ that lift to $\RR$-valued cocycles, the $K$-theory of $C^*(\Lambda, c)$ is
the same as that of $C^*(\Lambda)$. In this paper, we extend these results to finitely
aligned $k$-graphs, and identify the gauge-invariant ideal structure of twisted relative
Cuntz-Krieger algebras of higher-rank graphs (the Cuntz-Krieger algebra and the Toeplitz
algebra are special cases). We also establish a sufficient condition for simplicity of
$C^*(\Lambda, c)$ when $\Lambda$ is row-finite with no sources and $c$ is induced by the
degree map from a cocycle on $\ZZ^k$. The sufficient condition for simplicity is new and
requires an intricate analysis of the subalgebra generated by spanning elements in
$C^*(\Lambda, c)$ whose initial and final projections coincide, and relies on a local
decomposition of this subalgebra as a direct sum of noncommutative tori.

We have organised this paper as follows. Section~\ref{sec:background} introduces the
necessary background about $k$-graphs and their cohomology. In Section
\ref{sec:uniqueness thms}, we introduce the twisted Toeplitz algebra, the twisted
relative Cuntz-Krieger algebras and the twisted Cuntz-Krieger algebra of a finitely
aligned $k$-graph with respect to a $\TT$-valued cocycle $c$. Following the program of
\cite{RaeburnSims:JOT05, Sims:CJM06}  we prove a version of Coburn's theorem for the
twisted Toeplitz algebra, and versions of an Huef and Raeburn's gauge-invariant
uniqueness theorem and the Cuntz-Krieger uniqueness theorem for the twisted relative
Cuntz-Krieger algebras. We give a sufficient condition for the twisted Cuntz-Krieger
algebra to be simple and purely infinite. In Section~\ref{sec:ideals}, we adapt the
analysis of \cite{Sims:CJM06} to give a complete listing of the gauge-invariant ideals in
a twisted relative Cuntz-Krieger algebra. This is new even in the untwisted setting,
since the results of \cite{Sims:CJM06} only apply to the Cuntz-Krieger algebra. In
Section~\ref{sec:nuclear}, we modify arguments of \cite[\S 8]{Sims:CJM06} to show that
every twisted relative Cuntz-Krieger algebra is Morita equivalent to a crossed product of
an AF algebra by $\ZZ^k$, and is therefore nuclear and satisfies the UCT. In
Section~\ref{sec:K-theory}, we combine ideas from \cite{RaeburnSimsEtAl:JFA04} and
\cite{KumjianPaskEtAl:JMAA13} to show that if the twisting cocycle arises by
exponentiation of a real-valued $2$-cocycle, then the $K$-groups of the twisted relative
Cuntz-Krieger algebra are isomorphic to those of the corresponding untwisted algebra.

Since our proofs of the results discussed in the preceding paragraph follow the lines of
proofs of earlier results, our treatment is mostly quite brief, relying heavily on
reference to the existing arguments, and we provide additional detail only where a
nontrivial change is needed. One (perhaps surprising) example of the latter is the matter
of ascertaining which diagonal projections in a twisted relative Cuntz-Krieger algebra
are nonzero; it turns out that neither the arguments used for the row-finite case in
\cite{KumjianPaskEtAl:TAMSxx} nor those used for the untwisted case in \cite{Sims:IUMJ06}
can easily be adapted to our setting, so we use a different approach employing filters
(see \cite{Exel:SF09} and \cite{BrownloweSimsEtAl:IJM13}) in a $k$-graph $\Lambda$ and
the path-space representation. This approach simplifies and streamlines even the
untwisted setting \cite{Sims:IUMJ06}, and substantially improves upon the argument used
for twisted $C^*$-algebras of row-finite $k$-graphs with no sources in
\cite{KumjianPaskEtAl:TAMSxx}.

In the final section, we consider simplicity of twisted $k$-graph $C^*$-algebras. For
untwisted $C^*$-algebras it is proved in \cite{LewinSims:MPCPS10} that the Cuntz-Krieger
algebra of a finitely aligned $k$-graph $\Lambda$ is simple if and only if $\Lambda$ is
cofinal and aperiodic (other relative Cuntz-Krieger algebras are never simple). In the
twisted situation, the ``if" implication in this result follows from more or less the
same argument (see Corollary~\ref{cor:CKUT}); and necessity of cofinality also persists,
although the argument of \cite{LewinSims:MPCPS10} requires some modification. However,
for twisted $C^*$-algebras, aperiodicity of $\Lambda$ is not necessary for simplicity of
$C^*(\Lambda, c)$: when $\NN^2$ is regarded as a $2$-graph it is certainly not aperiodic,
but its twisted $C^*$-algebras are the rotation algebras (see
\cite[Example~7.7]{KumjianPaskEtAl:JFA12}), whose simplicity or otherwise depends on the
twisting cocycle \cite{Slawny:CMP72}. Recent work on primitive ideals in $k$-graph
$C^*$-algebras \cite{CarlsenKangEtAl:xx13} shows that if $\Lambda$ is row-finite with no
sources and is cofinal, then the primitive ideals of $C^*(\Lambda)$ are indexed by
characters of a subgroup $\Per(\Lambda)$ of $\ZZ^k$. We consider cofinal row-finite
$k$-graphs with no sources, and cocycles which are pulled back along the degree map from
cocycles $c$ of $\ZZ^k$. We show that if the associated skew-symmetric bicharacter $cc^*$
restricts to a nondegenerate bicharacter of $\Per(\Lambda)$ (see \cite{OPT,
Phillips:xx06, Slawny:CMP72}) --- the condition which characterises simplicity of the
noncommutative torus $C^*(\Per(\Lambda), c)$ --- then the associated twisted $k$-graph
$C^*$-algebra is simple.

\smallskip

We thank the anonymous referee for detailed and helpful comments; in particular the
referee's suggestions have substantially improved the presentation of the results in
Section~\ref{sec:simplicity}.

\section{Background}\label{sec:background}

Throughout this paper, $\NN$ denotes the natural numbers including $0$, and $\NN^k$ is
the monoid of $k$-tuples of natural numbers under coordinatewise addition. We denote the
generators of $\NN^k$ by $e_1, \dots, e_k$ and we write $n_i$ for the
$i$\textsuperscript{th} coordinate of $n \in \NN^k$, so that $n = \sum_i n_i \cdot e_i$.
For $m,n \in \NN^k$, we write $m \vee n$ and $m \wedge n$ for their coordinatewise
maximum and minimum. We regard $\NN^k$ as a partially ordered set with $m \le n$ if and
only if $m_i \le n_i$ for all $i$. When convenient we will regard $\NN^k$ as a category
with one object and composition given by addition.

A \emph{$k$-graph} is a countable category $\Lambda$ endowed with a functor $d : \Lambda
\to \NN^k$ that satisfies the following factorisation property: whenever $d(\lambda) =
m+n$ there are unique $\mu \in d^{-1}(m)$ and $\nu \in d^{-1}(n)$ such that $\lambda =
\mu\nu$. We write $\Lambda^n$ for $d^{-1}(n)$. Since the identity morphisms of $\Lambda$
are idempotents and $d$ is a functor, the identity morphisms belong to $\Lambda^0$. Hence
the codomain and domain maps in the category $\Lambda$ determine maps $r,s : \Lambda \to
\Lambda^0$. We then have $r(\lambda)\lambda = \lambda = \lambda s(\lambda)$ for all
$\lambda$, and the factorisation property implies that $\Lambda^0 = \{\id_o : o \in
\Obj(\Lambda)\}$. Since we are thinking of $\Lambda$ as a kind of graph, we call its
morphisms \emph{paths}; the paths $\Lambda^0$ of degree 0 are called \emph{vertices}.

Given $\lambda \in \Lambda$ and $E \subseteq \Lambda$ we write $\lambda E := \{\lambda\mu
: \mu \in E, r(\mu) = s(\lambda)\}$, and $E\lambda$ is defined similarly. In particular,
if $v \in \Lambda^0$ and $n \in \NN^k$ then $v\Lambda^n = \{\lambda \in \Lambda^n :
r(\lambda) = v\}$.

For $\mu,\nu \in \Lambda$, we define
\begin{align*}
\MCE(\mu,\nu)
	&:= \{\lambda \in \Lambda^{d(\mu) \vee d(\nu)} : \lambda = \mu\alpha = \nu\beta\text{ for some } \alpha,\beta\}\\
	&= \mu\Lambda \cap \nu\Lambda \cap \Lambda^{d(\mu) \vee d(\nu)}.
\end{align*}
We say that $\Lambda$ is \emph{finitely aligned} if $\MCE(\mu,\nu)$ is always finite
(possibly empty).

Let $v \in \Lambda^0$ and $E \subseteq v\Lambda$. We say that $E$ is \emph{exhaustive} if
for every $\lambda \in v\Lambda$ there exists $\mu \in E$ such that $\MCE(\lambda,\mu)
\not= \emptyset$. Equivalently, $E$ is exhaustive if $E\Lambda \cap \lambda\Lambda \not=
\emptyset$ for all $\lambda \in v\Lambda$. Following \cite{Sims:IUMJ06}, we write
$\FE(\Lambda)$ for the collection of all finite exhaustive sets $E$ in $\Lambda$ such
that $E \cap \Lambda^0 = \emptyset$. Each $E \in \FE(\Lambda)$ is a subset of $v\Lambda$
for some $v$. Given any $v \in \Lambda^0$ and $E \subseteq v\Lambda$, we write $r(E) :=
v$. If $E \in \FE(\Lambda)$ with $r(E) = v$, we say $E \in v\FE(\Lambda)$. If $\Ee
\subseteq \FE(\Lambda)$ and $v \in \Lambda^0$, then $v\Ee = \Ee \cap v\FE(\Lambda)$.

Following \cite{KumjianPaskEtAl:TAMSxx}, if $A$ is an abelian group, we say that $c :
\{(\mu,\nu) \in \Lambda \times \Lambda : s(\mu) = r(\nu)\} \to A$ is a \emph{normalised
cocycle} if $c(\lambda,\mu) + c(\lambda\mu, \nu) = c(\mu,\nu) + c(\lambda, \mu\nu)$ for
all composable $\lambda,\mu,\nu$, and $c(r(\lambda),\lambda) = 0 = c(\lambda,
s(\lambda))$ for all $\lambda$. (In the special case $A = \TT$, we will write the group
operation multiplicatively and the identity element as $1$, and write $\overline{c}$ for
the inverse $\overline{c}(\lambda,\mu) = \overline{c(\lambda,\mu)}$ of $c \in
\Zcat2(\Lambda, \TT)$.) We write $\Zcat2 (\Lambda, A)$ for the collection of all
normalised cocycles, which forms a group under pointwise addition in $A$. In the
cohomology introduced in \cite{KumjianPaskEtAl:TAMSxx}, the coboundary map $\dcat1$
carries a function $b : \Lambda \to A$ to the map $\dcat1b \in \Zcat2(\Lambda, A)$ given
by $(\dcat1b)(\mu,\nu) = b(\mu) + b(\nu) - b(\mu\nu)$. The elements of the range of
$\dcat1$ are called \emph{coboundaries}. Cocycles $c, c' \in \Zcat2(\Lambda, A)$ are
\emph{cohomologous} if $c' - c$ is a coboundary.

\section{The core and the uniqueness theorems}\label{sec:uniqueness thms}

In this section we prove uniqueness theorems for twisted relative $k$-graph
$C^*$-algebras: a version of Coburn's theorem for the twisted Toeplitz algebra, and
versions of the gauge-invariant uniqueness theorem and Cuntz-Krieger uniqueness theorem
for twisted relative Cuntz-Krieger algebras. We finish with a sufficient condition for
the twisted Cuntz-Krieger algebra of a $k$-graph to be simple and purely infinite.

The following definition parallels \cite[Definition~7.1]{RaeburnSims:JOT05}.

\begin{dfn}\label{dfn:TCKfamily}
Let $\Lambda$ be a finitely aligned $k$-graph and let $c \in \Zcat2(\Lambda, \TT)$. A
\emph{Toeplitz-Cuntz-Krieger $(\Lambda, c)$-family} in a $C^*$-algebra $A$ is a
collection $t = \{t_\lambda : \lambda \in \Lambda\} \subseteq A$ satisfying
\begin{itemize}
\item[(TCK1)] $\{t_v : v \in \Lambda^0\}$ is a set of mutually orthogonal
    projections;
\item[(TCK2)] $t_\mu t_\nu = c(\mu,\nu) t_{\mu\nu}$ whenever $s(\mu) = r(\nu)$;
\item[(TCK3)] $t^*_\lambda t_\lambda = t_{s(\lambda)}$ for all $\lambda \in \Lambda$;
    and
\item[(TCK4)] $t_\mu t^*_\mu t_\nu t^*_\nu = \sum_{\lambda \in \MCE(\mu,\nu)}
    t_\lambda t^*_\lambda$ for all $\mu,\nu \in \Lambda$, where empty sums are
    interpreted as zero.
\end{itemize}
We write $C^*(t)$ for $C^*(\{t_\lambda : \lambda \in \Lambda\}) \subseteq A$.
\end{dfn}

By \cite[Proposition~A.4]{Raeburn:Graphalgebras05}, relation~(TCK3) ensures that the
$t_\lambda$ are partial isometries, so that $t_\lambda t_\lambda^* t_\lambda = t_\lambda$
for all $\lambda$.

\begin{lem}\label{lem:TCK consequences}
Let $\Lambda$ be a finitely aligned $k$-graph, $c \in \Zcat2(\Lambda, \TT)$ and $t$ a
Toeplitz-Cuntz-Krieger $(\Lambda, c)$-family. The projections $\{t_\mu t^*_\mu : \mu \in
\Lambda\}$ pairwise commute, and $\{t_\mu t^*_\mu : \mu \in \Lambda^n\}$ is a collection
of mutually orthogonal projections for each $n \in \NN^k$. For $\mu,\nu,\eta,\zeta \in
\Lambda$ we have
\begin{align*}
t^*_\nu t_\eta
	&= \sum_{\nu\alpha = \eta\beta \in \MCE(\nu,\eta)} \overline{c(\nu,\alpha)}c(\eta,\beta) t_\alpha t^*_\beta\quad\text{ and } \\
t_\mu t^*_\nu t_\eta t^*_\zeta
	&= \sum_{\nu\alpha = \eta\beta \in \MCE(\nu,\eta)}
c(\mu,\alpha)\overline{c(\nu,\alpha)}c(\eta,\beta)\overline{c(\zeta,\beta)} t_{\mu\alpha}
t^*_{\zeta\beta}.
\end{align*}
In particular $C^*(t) = \clsp\{t_\mu t^*_\nu : s(\mu) = s(\nu)\}$.
\end{lem}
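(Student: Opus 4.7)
The plan is to handle the four assertions in sequence, leaning on (TCK1)--(TCK4) and the partial-isometry property $t_\lambda t_\lambda^* t_\lambda = t_\lambda$ noted after Definition~\ref{dfn:TCKfamily}. For pairwise commutativity of $\{t_\mu t^*_\mu\}$, I would simply observe that the right-hand side of (TCK4) is symmetric in $\mu$ and $\nu$ since $\MCE(\mu,\nu) = \MCE(\nu,\mu)$, which forces $t_\mu t^*_\mu t_\nu t^*_\nu = t_\nu t^*_\nu t_\mu t^*_\mu$. For mutual orthogonality at fixed degree $n$: if $\mu,\nu \in \Lambda^n$ with $\mu \neq \nu$ and $\lambda \in \MCE(\mu,\nu)$, then $d(\lambda) = n = d(\mu)$ forces $\lambda = \mu\alpha$ with $d(\alpha) = 0$, hence $\alpha = s(\mu)$ and $\lambda = \mu$; likewise $\lambda = \nu$, contradicting $\mu \neq \nu$. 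So $\MCE(\mu,\nu) = \emptyset$ and (TCK4) gives $t_\mu t^*_\mu t_\nu t^*_\nu = 0$.

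For the formula for $t^*_\nu t_\eta$, the plan is to sandwich (TCK4) between $t_\nu^*$ on the left and $t_\eta$ on the right, using the identity $t_\nu^* t_\eta = t_\nu^*(t_\nu t_\nu^*)(t_\eta t_\eta^*) t_\eta$ that follows from the partial-isometry property. For each $\lambda = \nu\alpha = \eta\beta \in \MCE(\nu,\eta)$, (TCK2) gives $t_\lambda = \overline{c(\nu,\alpha)}\, t_\nu t_\alpha = \overline{c(\eta,\beta)}\, t_\eta t_\beta$. Combining with (TCK3) and the normalisation identities $c(r(\cdot),\cdot) = 1 = c(\cdot, s(\cdot))$ (which imply $t_{s(\nu)} t_\alpha = t_\alpha$ and $t_\beta t_{s(\beta)} = t_\beta$), one computes
\begin{align*}
t_\nu^* t_\lambda &= \overline{c(\nu,\alpha)}\, t_\nu^* t_\nu t_\alpha = \overline{c(\nu,\alpha)}\, t_{s(\nu)} t_\alpha = \overline{c(\nu,\alpha)}\, t_\alpha, \\
t_\lambda^* t_\eta &= c(\eta,\beta)\, t_\beta^* t_\eta^* t_\eta = c(\eta,\beta)\, t_\beta^* t_{s(\eta)} = c(\eta,\beta)\, t_\beta^*.
\end{align*}
Multiplying these summand-by-summand and inserting into the sandwiched (TCK4) delivers the first displayed formula.

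The four-term formula then follows by multiplying the $t^*_\nu t_\eta$ identity by $t_\mu$ on the left and $t^*_\zeta$ on the right and applying (TCK2) to $t_\mu t_\alpha = c(\mu,\alpha) t_{\mu\alpha}$ together with $(t_\zeta t_\beta)^* = \overline{c(\zeta,\beta)}\, t^*_{\zeta\beta}$. For the final spanning claim, note that $t_\lambda = t_\lambda t_{s(\lambda)} = t_\lambda t^*_{s(\lambda)}$ (the projection $t_{s(\lambda)}$ is self-adjoint and $c(\lambda,s(\lambda)) = 1$), so each generator lies in $\lsp\{t_\mu t^*_\nu : s(\mu) = s(\nu)\}$. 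The four-term formula shows this span is closed under products (cross-terms with $s(\mu) \neq s(\nu)$ or $s(\eta) \neq s(\zeta)$ vanish by (TCK1) and the orthogonality of distinct vertex projections), and it is visibly closed under the adjoint, so its closure is a $C^*$-subalgebra containing all generators and therefore equals $C^*(t)$.

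The main obstacle is book-keeping the cocycle scalars through the reduction: one must repeatedly invoke the normalisation conditions on $c$ to absorb the vertex projections $t_{r(\cdot)}$ and $t_{s(\cdot)}$ and carefully track the conjugations introduced by taking adjoints, so that the four factors $c(\mu,\alpha)$, $\overline{c(\nu,\alpha)}$, $c(\eta,\beta)$, $\overline{c(\zeta,\beta)}$ emerge in exactly the positions stated.
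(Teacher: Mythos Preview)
Your proposal is correct and follows essentially the same approach as the paper: symmetry of $\MCE$ for commutativity, emptiness of $\MCE(\mu,\nu)$ for distinct $\mu,\nu$ of equal degree, sandwiching~(TCK4) between $t_\nu^*$ and $t_\eta$ to obtain the first displayed formula, and left/right multiplication by $t_\mu$ and $t_\zeta^*$ for the second. The only cosmetic difference is that the paper handles the spanning assertion by first noting $\lsp\{t_\mu t^*_\nu : \mu,\nu \in \Lambda\}$ is closed under multiplication and then observing that $s(\mu)\neq s(\nu)$ forces $t_\mu t^*_\nu = t_\mu t_{s(\mu)} t_{s(\nu)} t^*_\nu = 0$, whereas you work directly inside $\lsp\{t_\mu t^*_\nu : s(\mu)=s(\nu)\}$; either way is fine.
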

\begin{proof}
The first assertion follows from~(TCK4) as $\MCE(\mu,\nu) = \MCE(\nu,\mu)$. If $d(\mu) =
d(\nu)$ and $\mu \not= \nu$, then the factorisation property ensures that $\mu\Lambda
\cap \nu\Lambda = \emptyset$, and in particular $\MCE(\mu,\nu) = \emptyset$. Hence $t_\mu
t^*_\mu t_\nu t^*_\nu = 0$ by~(TCK4); so the $t_\mu t^*_\mu$ where $\mu \in \Lambda^m$
are mutually orthogonal.

For the first displayed equation, we use~(TCK4), (TCK2), and then~(TCK2) to calculate:
\begin{align*}
t^*_\nu t_\eta
	&= t^*_\nu (t_\nu t^*_\nu t_\eta t^*_\eta) t_\eta
	= \sum_{\nu\alpha=\eta\beta \in \MCE(\nu,\eta)} t^*_\nu t_{\nu\alpha}t^*_{\eta\beta} t_\eta \\
	&= \sum_{\nu\alpha=\eta\beta \in \MCE(\nu,\eta)}
        \overline{c(\nu,\alpha)}c(\eta,\beta) t^*_\nu t_\nu t_\alpha t^*_\beta t^*_\eta t_\eta \\
	&= \sum_{\nu\alpha = \eta\zeta \in \MCE(\nu,\eta)}
        \overline{c(\nu,\alpha)}c(\eta,\beta) t_\alpha t^*_\beta.
\end{align*}
Multiplying this expression on the left by $t_\mu$ and on the right by $t_\zeta$ and then
applying~(TCK2) yields the second displayed equation. It follows that $\clsp\{t_\mu
t^*_\nu : \mu,\nu \in \Lambda\}$ is closed under multiplication and hence equal to
$C^*(t)$, so the final assertion follows from the observation that if $s(\mu) \not=
s(\nu)$, then $t_\mu t^*_\nu = t_\mu t_{s(\mu)} t_{s(\nu)} t^*_\nu = 0$ by~(TCK3)
and~(TCK1).
\end{proof}

As a consequence of Lemma~\ref{lem:TCK consequences}, products of the form
$\prod_{\lambda \in E} (t_v - t_\lambda t^*_\lambda)$ for finite subsets $E$ of $\Lambda$
are well-defined, and so we can make the following definition.

\begin{dfn}\label{dfn:relCKfamily}
For $\Ee \subseteq \FE(\Lambda)$, we say that a Toeplitz Cuntz-Krieger $(\Lambda,
c)$-family $t$ is a \emph{relative Cuntz-Krieger $(\Lambda, c; \Ee)$-family} if
\begin{itemize}
\item[(CK)] for every $v \in E^0$ and $E \in v\Ee$, we have $\prod_{\lambda \in E}
    (t_{r(E)} - t_\lambda t^*_\lambda) = 0$.
\end{itemize}
A relative Cuntz-Krieger $(\Lambda, c; \FE(\Lambda))$-family is called a
\emph{Cuntz-Krieger $(\Lambda, c)$-family}.
\end{dfn}

\begin{rmk}
If $v \in \Lambda^0$ and $r(\lambda) = v$ then we have $t_v t_\lambda t^*_\lambda = c(v,
\lambda) t_{v\lambda} t^*_\lambda = t_\lambda t^*_\lambda$. Lemma~\ref{lem:TCK
consequences} implies in particular that $\{t_\lambda t^*_\lambda : \lambda \in
v\Lambda^n\}$ is a set of mutually orthogonal projections, so $\sum_{\lambda \in F}
t_\lambda t^*_\lambda$ is a projection for every finite $F \subseteq v\Lambda^n$. Since
$t_v \Big(\sum_{\lambda \in F} t_\lambda t^*_\lambda\Big) = \sum_{\lambda \in F}
t_\lambda t^*_\lambda$, we conclude that $t_v \ge \sum_{\lambda \in F} t_\lambda
t^*_\lambda$. So relations (TCK1)--(TCK4) (for $c \equiv 1$) imply relation~(4) of
\cite[Definition~7.1]{RaeburnSims:JOT05}, and in particular a Toeplitz-Cuntz-Krieger
$(\Lambda, 1)$-family as defined here is the same thing as a Toeplitz-Cuntz-Krieger
$\Lambda$-family in the sense of \cite{RaeburnSims:JOT05}. Lemma~\ref{lem:TCK
consequences} also shows that a relative Cuntz-Krieger $(\Lambda, 1; \Ee)$-family is the
same thing as a relative Cuntz-Krieger $\Lambda$-family in the sense of
\cite{Sims:IUMJ06}
\end{rmk}

The $t_\lambda$ in any Toeplitz-Cuntz-Krieger $(\Lambda, c)$-family are partial
isometries, and hence have norm 0 or 1, and the same is then true for the $t_\mu
t^*_\nu$. It is straightforward (following the strategy of, for example,
\cite[Propositions 1.20~and~1.21]{Raeburn:Graphalgebras05}) to show that there is a
$C^*$-algebra $\Tt C^*(\Lambda, c)$ generated by a Toeplitz-Cuntz-Krieger family $s^c_\Tt
:= \{s^c_\Tt(\lambda) : \lambda \in \Lambda\}$ which is universal in the sense that every
Toeplitz-Cuntz-Krieger $(\Lambda, c)$-family $t$ induces a homomorphism $\pi_t : \Tt
C^*(\Lambda, c) \to C^*(t)$ satisfying $\pi_t(s^c_\Tt(\lambda)) = t_\lambda$ for all
$\lambda$.

Given a set $\Ee \subseteq \FE(\Lambda)$, let $J_\Ee \subseteq \Tt C^*(\Lambda, c)$ be
the ideal generated by $\big\{\prod_{\lambda \in E} (s^c_\Tt(r(E)) - s^c_\Tt(\lambda)
s^c_\Tt(\lambda)^*) : E \in \Ee\big\}$. Then the quotient
\[
C^*(\Lambda, c; \Ee) := \Tt C^*(\Lambda, c) / J_\Ee
\]
is, by construction, universal for relative Cuntz-Krieger $(\Lambda, c; \Ee)$-families.
We denote each $s^c_\Tt(\lambda) + J_\Ee$ by $s^c_\Ee(\lambda)$, so that $s^c_\Ee :=
\{s^c_\Ee(\lambda) : \lambda \in \Lambda\}$ is a universal $(\Lambda, c;\Ee)$-family in
$C^*(\Lambda, c; \Ee)$. In the special case where $\Ee = \FE(\Lambda)$ we simply write
$s^c(\lambda)$ for $s^c_{\FE(\Lambda)}(\lambda)$, and we denote $C^*(\Lambda, c;
\FE(\Lambda))$ by $C^*(\Lambda, c)$. It follows almost immediately from the universal
property that $C^*(\Lambda, c; \Ee)$ depends only on the cohomology class of $c$:

\begin{prop}\label{prp:H2-dependence}
Let $\Lambda$ be a finitely aligned $k$-graph and let $\Ee$ be a subset of
$\FE(\Lambda)$. Suppose that $c_1, c_2 \in \Zcat2(\Lambda, \TT)$ are cohomologous; say
$c_1 = \dcat1b c_2$. Then there is an isomorphism $\pi : C^*(\Lambda, c_1; \Ee) \to
C^*(\Lambda, c_2; \Ee)$ such that $\pi(s^{c_1}_\Ee(\lambda)) =
b(\lambda)s^{c_2}_\Ee(\lambda)$ for each $\lambda \in \Lambda$.
\end{prop}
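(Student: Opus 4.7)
The plan is to construct $\pi$ directly from the universal property of $C^*(\Lambda, c_1; \Ee)$: I will define elements $t_\lambda := b(\lambda) s^{c_2}_\Ee(\lambda)$ in $C^*(\Lambda, c_2; \Ee)$, verify that $\{t_\lambda : \lambda \in \Lambda\}$ is a relative Cuntz-Krieger $(\Lambda, c_1;\Ee)$-family, and then invoke universality.

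The first observation I would record is that since $\dcat1 b \cdot c_2 = c_1$ and both $c_1, c_2$ are normalised, evaluating at $(r(\lambda),\lambda)$ and $(\lambda,s(\lambda))$ forces $b(v) = 1$ for every $v \in \Lambda^0$. In particular $t_v = s^{c_2}_\Ee(v)$, so (TCK1) is immediate. For (TCK3), $t_\lambda^* t_\lambda = \overline{b(\lambda)}b(\lambda) s^{c_2}_\Ee(\lambda)^* s^{c_2}_\Ee(\lambda) = s^{c_2}_\Ee(s(\lambda)) = t_{s(\lambda)}$. For (TCK2), one computes
\[
t_\mu t_\nu = b(\mu)b(\nu) c_2(\mu,\nu)\, s^{c_2}_\Ee(\mu\nu)
    = b(\mu)b(\nu)\overline{b(\mu\nu)}\, c_2(\mu,\nu)\, t_{\mu\nu}
    = c_1(\mu,\nu)\, t_{\mu\nu},
\]
using precisely the hypothesis $c_1 = \dcat1 b \cdot c_2$. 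For (TCK4) and (CK), note that $|b(\lambda)| = 1$ forces $t_\lambda t_\lambda^* = s^{c_2}_\Ee(\lambda) s^{c_2}_\Ee(\lambda)^*$, so both range-type relations for $t$ reduce verbatim to the corresponding relations for $s^{c_2}_\Ee$, which hold by construction.

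By the universal property of $C^*(\Lambda, c_1;\Ee)$, this produces a homomorphism $\pi : C^*(\Lambda, c_1; \Ee) \to C^*(\Lambda, c_2; \Ee)$ with $\pi(s^{c_1}_\Ee(\lambda)) = b(\lambda) s^{c_2}_\Ee(\lambda)$. To obtain an inverse I would apply exactly the same construction with the roles of $c_1$ and $c_2$ swapped: since $c_1 = \dcat1 b \cdot c_2$ is equivalent to $c_2 = \dcat1(\overline b) \cdot c_1$ and $\overline b$ again vanishes on $\Lambda^0$, the family $u_\lambda := \overline{b(\lambda)} s^{c_1}_\Ee(\lambda)$ is a relative Cuntz-Krieger $(\Lambda, c_2; \Ee)$-family and induces a homomorphism $\pi' : C^*(\Lambda, c_2; \Ee) \to C^*(\Lambda, c_1; \Ee)$ sending $s^{c_2}_\Ee(\lambda)$ to $\overline{b(\lambda)} s^{c_1}_\Ee(\lambda)$.

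Finally, $\pi' \circ \pi$ fixes each generator $s^{c_1}_\Ee(\lambda)$ since $\overline{b(\lambda)} b(\lambda) = 1$, and symmetrically $\pi \circ \pi' = \id$; as the generators densely span each algebra, both compositions are the identity, so $\pi$ is an isomorphism. There is no genuine obstacle here: the entire content is the observation that the $2$-cocycle identity $c_1 = \dcat1 b \cdot c_2$ is exactly what is needed to convert (TCK2) for $c_2$ into (TCK2) for $c_1$ after rescaling generators by $b$. The only mild subtlety worth flagging in the write-up is the normalisation argument forcing $b|_{\Lambda^0} \equiv 1$, which is what makes (TCK1) and (TCK3) work out cleanly.
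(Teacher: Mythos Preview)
Your proof is correct and follows exactly the approach sketched in the paper: define $t_\lambda := b(\lambda)s^{c_2}_\Ee(\lambda)$, verify it is a relative Cuntz-Krieger $(\Lambda, c_1;\Ee)$-family, invoke universality, and then swap $c_1 \leftrightarrow c_2$ and $b \leftrightarrow \overline{b}$ to get the inverse. You have supplied the details the paper leaves to the reader (in particular the normalisation argument forcing $b|_{\Lambda^0}\equiv 1$), but the strategy is identical.
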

\begin{proof}
The proof is essentially that of \cite[Proposition~5.6]{KumjianPaskEtAl:TAMSxx}: the
formula $t_\lambda := b(\lambda)s^{c_2}_\Ee(\lambda)$ defines a relative Cuntz-Krieger
$(\Lambda, c_1; \Ee)$-family in $C^*(\Lambda, c_2;\Ee)$ and therefore induces a
homomorphism $\pi$ carrying each $s^{c_1}_\Ee(\lambda)$ to
$b(\lambda)s^{c_2}_\Ee(\lambda)$. Interchanging the roles of $c_1$ and $c_2$ and
replacing $b$ with $\overline{b}$ yields an inverse for $\pi$, and the result follows.
\end{proof}

The universal property of each $C^*(\Lambda, c; \Ee)$ ensures that there is a
homomorphism $\gamma^c : \TT^k \to \Aut(C^*(\Lambda, c; \Ee))$ such that
$\gamma^c_z(s^c_\Ee(\lambda)) = z^{d(\lambda)} s^c_\Ee(\lambda)$ for all $\lambda$. An
$\varepsilon/3$-argument shows that $\gamma^c$ is strongly continuous. Averaging over
$\gamma^c$ gives a faithful conditional expectation
\[
\Phi^c_\Ee : C^*(\Lambda, c; \Ee) \to C^*(\Lambda, c; \Ee)^{\gamma^c} := \{a \in
C^*(\Lambda, c; \Ee) : \gamma^c_z(a) = a\text{ for all }z\}.
\]
Since $\gamma^c_z(s^c_\Ee(\mu) s^c_\Ee(\nu)^*) = z^{d(\mu)- d(\nu)} s^c_\Ee(\mu)
s^c_\Ee(\nu)^*$ and since the $s^c_\Ee(\mu) s^c_\Ee(\nu)^*$ span a dense subspace of
$C^*(\Lambda, c; \Ee)$, we see that $\Phi^c_\Ee$ is characterised by
\[
\Phi^c_\Ee(s^c_\Ee(\mu) s^c_\Ee(\nu)^*) = \delta_{d(\mu), d(\nu)} s^c_\Ee(\mu)
s^c_\Ee(\nu)^*,
\]
and so $C^*(\Lambda, c; \Ee)^{\gamma^c} = \clsp\{s^c_\Ee(\mu) s^c_\Ee(\nu)^* : d(\mu) =
d(\nu)\}$.

For $E \subseteq r(\lambda) \Lambda$, we write
\[
\Ext(\lambda;E) := \bigcup_{\mu \in E} \{\alpha : \lambda\alpha \in \MCE(\lambda, \mu)\}.
\]

We say that a subset $\Ee$ of $\FE(\Lambda)$ is \emph{satiated} if it satisfies
\begin{itemize}
\item[(S1)] if $F \in \Ee$ and $\lambda \in r(F)\Lambda \setminus \{r(F)\}$, then $F
    \cup \{\lambda\} \in \Ee$;
\item[(S2)] if $F \in \Ee$ and $\lambda \in r(F)\Lambda \setminus F\Lambda$, then
    $\Ext(\lambda;F) \in \Ee$;
\item[(S3)] if $F \in \Ee$ and $\lambda, \lambda\lambda' \in F$ with $\lambda' \neq
    s(\lambda)$, then $F \setminus \{\lambda\lambda'\} \in \Ee$;
\item[(S4)] if $F \in \Ee$, $\lambda \in F$ and $G \in \Ee$ with $r(G) = s(\lambda)$,
    then $F \setminus \{\lambda\} \cup \lambda G \in \Ee$.
\end{itemize}
Lemma~5.3 of \cite{Sims:IUMJ06} shows that the sets constructed in (S1)--(S4) belong to
$\FE(\Lambda)$, and so $\FE(\Lambda)$ always contains $\Ee$ and satisfies (S1)--(S4).
Given $\Ee \subseteq \FE(\Lambda)$, we write $\overline{\Ee}$ for the intersection of all
satiated subsets of $\FE(\Lambda)$ containing $\Ee$. We call $\overline{\Ee}$ the
\emph{satiation} of $\Ee$.

Our (S1)--(S4) are different from those of \cite{Sims:IUMJ06}; but any set that can be
constructed by our (S1)--(S4) can be constructed from an application of the corresponding
operation from \cite{Sims:IUMJ06}, and conversely any set that can be constructed by any
of (S1)--(S4) from \cite{Sims:IUMJ06} can be obtained from finitely many applications of
the operations discussed above. So our definition of a satiated set agrees with that of
\cite{Sims:IUMJ06}.

\begin{ntn}
Let $\Lambda$ be a finitely aligned $k$-graph, let $c \in \Zcat2(\Lambda, \TT)$ and let
$t$ be a Toeplitz-Cuntz-Krieger $(\Lambda, c)$-family. For $\lambda \in \Lambda$, we
write $q_\lambda := t_\lambda t^*_\lambda$.

For $v \in \Lambda^0$ and a finite subset $E$ of $v\Lambda$, we define
\[
	\Delta(t)^E := \prod_{\lambda \in E} (q_{r(E)} - q_\lambda).
\]
In the universal algebras $C^*(\Lambda, c; \Ee)$, we write $p_{\Ee}^c(\lambda) :=
s_{\Ee}^c(\lambda) s^{c}_{\Ee}(\lambda)^* \in C^*(\Lambda, c; \Ee)$, and then
$\Delta(s_\Ee^c)^E = \prod_{\lambda \in E} \big(p_{\Ee}^c(r(E)) -
p_{\Ee}^c(\lambda)\big)$.
\end{ntn}

\begin{lem}\label{lem:Delta commutation}
Let $\Lambda$ be a finitely aligned $k$-graph, let $c \in \Zcat2(\Lambda, \TT)$ and let
$t$ be a Toeplitz-Cuntz-Krieger $(\Lambda, c)$-family. If $\emptyset \not= E \subseteq
v\Lambda$ is finite and $\mu \in v\Lambda$, then
\[
\Delta(t)^E t_\mu = t_\mu \Delta(t)^{\Ext(\mu;E)}.
\]
\end{lem}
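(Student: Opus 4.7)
The plan is to prove the identity by pushing $t_\mu$ through the product $\Delta(t)^E$ one factor at a time. First I would establish the single-factor version, then iterate, and finally rewrite the result as $\Delta(t)^{\Ext(\mu;E)}$.

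For the single-factor computation, note that since $c$ is normalised and $r(\mu) = v = r(E)$, relation (TCK2) gives $q_v t_\mu = t_v t_\mu = c(v,\mu) t_{v\mu} = t_\mu$. For the correction term I would apply the first identity of Lemma~\ref{lem:TCK consequences} to expand $t_\lambda^* t_\mu$ as a sum indexed by $\MCE(\lambda,\mu)$, then multiply on the left by $t_\lambda$ and apply (TCK2); the cocycle coefficients $c(\lambda,\alpha)$ and $\overline{c(\lambda,\alpha)}$ cancel, and using $\lambda\alpha = \mu\beta$ together with $t_\mu q_\beta = c(\mu,\beta) t_{\mu\beta} t_\beta^*$, this collapses to
\[
q_\lambda t_\mu = \sum_{\beta \in \Ext(\mu;\{\lambda\})} t_\mu q_\beta = t_\mu\, p_\mu(\lambda),
\]
where $p_\mu(\lambda) := \sum_{\beta \in \Ext(\mu;\{\lambda\})} q_\beta$. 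By Lemma~\ref{lem:TCK consequences} the $q_\beta$ for $\beta \in \Ext(\mu;\{\lambda\})$ are mutually orthogonal subprojections of $q_{s(\mu)}$, so $p_\mu(\lambda)$ is itself a projection dominated by $q_{s(\mu)}$. Since $t_\mu q_{s(\mu)} = t_\mu$, this yields the single-factor identity $(q_v - q_\lambda) t_\mu = t_\mu\big(q_{s(\mu)} - p_\mu(\lambda)\big)$.

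I would then iterate: once $t_\mu$ has been moved past the rightmost factor in $\Delta(t)^E$, each remaining factor $(q_v - q_{\lambda'})$ still multiplies on the left of an expression beginning with $t_\mu$, so the same computation pushes it through. After handling all factors, we obtain
\[
\Delta(t)^E\, t_\mu = t_\mu \prod_{\lambda \in E} \big(q_{s(\mu)} - p_\mu(\lambda)\big).
\]

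Finally I would identify this product with $\Delta(t)^{\Ext(\mu;E)}$. Because $p_\mu(\lambda)$ is a finite sum of mutually orthogonal subprojections of $q_{s(\mu)}$, we have $q_{s(\mu)} - p_\mu(\lambda) = \prod_{\beta \in \Ext(\mu;\{\lambda\})}(q_{s(\mu)} - q_\beta)$. Taking the product over $\lambda \in E$ produces a product of factors $(q_{s(\mu)} - q_\beta)$ indexed by $\Ext(\mu;E) = \bigcup_{\lambda \in E} \Ext(\mu;\{\lambda\})$, with any $\beta$ lying in several $\Ext(\mu;\{\lambda\})$ contributing a duplicate factor. Since each $(q_{s(\mu)} - q_\beta)$ is a projection and all such factors commute, duplicates can be absorbed to give exactly $\Delta(t)^{\Ext(\mu;E)}$. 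The edge case $\Ext(\mu;E) = \emptyset$ is handled directly: then $\MCE(\mu,\lambda) = \emptyset$ for every $\lambda \in E$, so (TCK4) forces $q_\lambda t_\mu = q_\lambda q_\mu t_\mu = 0$, and both sides reduce to $t_\mu$, consistent with the natural convention that the empty product equals $q_{s(\mu)}$. The main obstacle is really just the careful bookkeeping of cocycle coefficients when computing $q_\lambda t_\mu$; once the single-factor identity is in hand, everything else is routine manipulation of commuting projections.
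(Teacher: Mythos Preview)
Your proof is correct and follows essentially the same approach as the paper: compute the single-factor identity $(q_v - q_\lambda)t_\mu = t_\mu\big(q_{s(\mu)} - \sum_{\beta \in \Ext(\mu;\{\lambda\})} q_\beta\big)$ using Lemma~\ref{lem:TCK consequences} and the cancellation of the cocycle coefficients, convert the sum to a product via mutual orthogonality of the $q_\beta$, and then iterate using $\Ext(\mu;E) = \bigcup_{\lambda \in E}\Ext(\mu;\{\lambda\})$. The only cosmetic difference is that the paper packages the iteration as an induction on $|E|$ (peeling off one factor and invoking the inductive hypothesis on $\Delta(t)^{E\setminus\{\lambda\}} t_\mu$), whereas you push $t_\mu$ through all factors first and then simplify the resulting product; both arguments rely on the same observation that duplicate factors $(q_{s(\mu)} - q_\beta)$ are absorbed because they are commuting projections.
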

\begin{proof}
Fix $\lambda \in E$ and use Lemma~\ref{lem:TCK consequences} to  calculate
\begin{align*}
\Delta(t)^E t_\mu
	&= \Delta(t)^{E \setminus\{\lambda\}} (q_v - t_\lambda t^*_\lambda) t_\mu t^*_{s(\mu)} \\
	&= \Delta(t)^{E \setminus\{\lambda\}} t_\mu -
		\sum_{\lambda\alpha = \mu\beta \in \MCE(\lambda,\mu)}
c(\lambda,\alpha)\overline{c(\lambda,\alpha)}c(\mu,\beta)\overline{c(s(\mu),\beta)}
t_{\lambda\alpha} t^*_{\beta}.
\end{align*}
For each $\alpha,\beta$ we have $t_{\lambda\alpha} = t_{\mu\beta} =
\overline{c(\mu,\beta)} t_\mu t_\beta$, and we deduce that
\[
\Delta(t)^E t_\mu
	= \Delta(t)^{E \setminus\{\lambda\}} t_\mu \Big(q_{s(\mu)} - \sum_{\beta \in
\Ext(\mu; \{\lambda\})} q_\beta\Big).
\]
The elements $\beta \in \Ext(\mu; \{\lambda\})$ all have degree $(d(\mu) \vee d(\lambda))
- d(\mu)$, and so the $q_\beta$ are mutually orthogonal. Hence $q_{s(\mu)} - \sum_{\beta
\in \Ext(\mu; \{\lambda\})} q_\beta = \prod_{\beta \in \Ext(\mu;\{\lambda\})} (q_{s(\mu)}
- q_\beta)$. Now an induction on $|E|$ using that $\Ext(\mu; E) = \bigcup_{\lambda \in E}
\Ext(\mu; \{\lambda\})$ proves the lemma.
\end{proof}

Proposition~3.12 of \cite{FarthingMuhlyEtAl:SF05} implies that $\Ext(\mu\nu; E) =
\Ext(\nu; \Ext(\mu; E))$ for all composable $\mu,\nu$. If $\mu \in E\Lambda$, then
$\Delta(s_\Tt^c)^E \leq s_\Tt^c (r(\mu))-p_\Tt^c(\mu)$, forcing $\Delta(s_\Tt^c)^E
s_\Tt^c(\mu)=0$. So Lemma~\ref{lem:Delta commutation} and condition~(S2) imply that for a
satiated subset $\Ee \subseteq \FE(\Lambda)$, the ideal $J_\Ee$ generated by
$\{\Delta(s_\Tt^c)^E : E \in \Ee\}$ satisfies
\begin{equation}\label{eq:JE descr}
J_\Ee = \clsp\{s_\Tt^c(\mu) \Delta(s_\Tt^c)^E s_\Tt^c(\nu)^* : s(\mu) = s(\nu)\text{ and
} E \in s(\mu)\Ee\}.
\end{equation}

We introduce the notion of a filter on a $k$-graph. For details, see
\cite[Section~3]{BrownloweSimsEtAl:IJM13} (taking $P = \NN^k$). We call a subset $S$ of
$\Lambda$ a \emph{filter} if
\begin{itemize}
\item[(F1)] $\lambda \Lambda \cap S \neq \emptyset$ implies $\lambda \in S$; and
\item[(F2)] $\mu,\nu \in S$ implies $\MCE(\mu,\nu) \cap S \not= \emptyset$.
\end{itemize}
For $\mu,\nu \in \Lambda$, we write $\mu \le \nu$ if $\nu = \mu\nu'$. Since $\lambda \in
\MCE(\mu,\nu)$ implies $\mu,\nu \le \lambda$, Condition~(F2) implies that each $(S, \le)$
is a directed set. Furthermore, for $\mu,\nu \in \Lambda$, distinct elements of
$\MCE(\mu,\nu)$ have the same degree, and so themselves have no common extensions. Hence
condition~(F2) implies that $\!|\MCE(\mu,\nu) \cap S| = 1$ for $\mu,\nu \in S$.

For a satiated set $\Ee \subseteq \FE(\Lambda)$, we say that a filter $S$ is
\emph{$\overline{\Ee}$}-compatible if, whenever $\lambda \in S$ and $E \in
s(\lambda)\overline{\Ee}$, we have $\lambda E \cap S \neq \emptyset$.

\begin{rmk}\label{rmk:filters vs paths}
It is straightforward to check that the $\overline{\Ee}$-compatible boundary paths of
\cite{Sims:IUMJ06} are in bijection with $\overline{\Ee}$-compatible filters via the map
$x \mapsto \{x(0,n) : n \le d(x)\}$.
\end{rmk}

Let $\{h_\lambda : \lambda \in \Lambda\}$ be the usual orthonormal basis for
$\ell^2(\Lambda)$. Routine calculations show that the formula $T_\mu h_\nu :=
\delta_{s(\mu), r(\nu)} c(\mu,\nu) h_{\mu\nu}$ determines a Toeplitz-Cuntz-Krieger
$(\Lambda, c)$-family in $\Bb(\ell^2(\Lambda))$. Let $\pi_T : \Tt C^*(\Lambda, c) \to
\Bb(\ell^2(\Lambda))$ be the representation induced by the universal property of $\Tt
C^*(\Lambda, c)$.

\begin{prop}\label{prp:nonzero elements}
Let $\Lambda$ be a finitely aligned $k$-graph and let $c \in \Zcat2(\Lambda, \TT)$. Let
$\Ee \subseteq \FE(\Lambda)$. Then the $s_\Ee^c(v)$ are all nonzero, and for each $v \in
\Lambda^0$ and finite $F \subseteq v\Lambda$, we have $\Delta(s_\Ee^c)^F = 0$ if and only
if either $v \in F$ or $F \in \overline{\Ee}$. We have $J_\Ee = J_{\overline{\Ee}}$ and
$C^*(\Lambda, c; \Ee) = C^*(\Lambda, c; \overline{\Ee})$.
\end{prop}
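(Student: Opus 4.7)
The plan is to prove the two directions of the characterisation of $\Delta(s_\Ee^c)^F = 0$ separately, and to derive the ideal and algebra equalities from them. For the ``if'' direction, observe first that if $v = r(F) \in F$, then one factor in the product $\Delta(s_\Ee^c)^F$ is $p_\Ee^c(v) - p_\Ee^c(v) = 0$. For $F \in \overline{\Ee}$, I would first establish $J_\Ee = J_{\overline{\Ee}}$ by induction on the construction of $\overline{\Ee}$ via (S1)--(S4): for each operation, I would show that the resulting $\Delta(s_\Tt^c)^{F'}$ already lies in $J_\Ee$ by expressing it, via Lemma~\ref{lem:Delta commutation} and the multiplication formulas of Lemma~\ref{lem:TCK consequences}, as a product involving $\Delta(s_\Tt^c)^F$ for some $F \in \Ee$ together with auxiliary projections $p_\Tt^c(\mu)$ and their complements. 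Because $c$ takes values in $\TT$, it contributes only unit scalars, so these calculations mirror those of \cite[Section~5]{Sims:IUMJ06} with bookkeeping for the twist. Once $J_\Ee = J_{\overline{\Ee}}$ is in hand, the algebra equality is automatic, and $\Delta(s_\Ee^c)^F = 0$ for $F \in \overline{\Ee}$ follows immediately from the definition of the quotient.

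For the ``only if'' direction, and to conclude that every $s_\Ee^c(v)$ is nonzero, I plan to adapt the filter/path-space representation of \cite{Exel:SF09, BrownloweSimsEtAl:IJM13} to the twisted setting. To each $\overline{\Ee}$-compatible filter $S$ I would associate a Hilbert space $\Hh_S$ indexed by paths adapted to $S$ and a Toeplitz-Cuntz-Krieger $(\Lambda, c)$-family $T^S$ defined by a twisted shift in which the cocycle enters as a scalar multiplier. Relations (TCK1)--(TCK4) should reduce to the $2$-cocycle identity combined with the factorisation property, and the $\overline{\Ee}$-compatibility of $S$ provides precisely what is required for $T^S$ to satisfy (CK) for every $E \in \overline{\Ee}$, yielding a representation $\pi_S$ of $C^*(\Lambda, c; \overline{\Ee})$ (and hence of $C^*(\Lambda, c; \Ee)$ via the quotient map from the previous paragraph) with $\pi_S(s_\Ee^c(v)) \neq 0$ for $v \in S$. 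To finish, for any $F$ with $v = r(F) \notin F$ and $F \notin \overline{\Ee}$, I would produce by a Zorn-type construction an $\overline{\Ee}$-compatible filter $S$ containing $v$ and disjoint from $F$; the corresponding representation then witnesses $\pi_S(\Delta(s_\Ee^c)^F) \neq 0$ on the basis vector indexed by the trivial path at $v$, simultaneously handling both nonvanishing assertions.

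The main obstacle is the combinatorial existence result underpinning the Zorn step: that $F \notin \overline{\Ee}$ if and only if there is an $\overline{\Ee}$-compatible filter $S$ with $r(F) \in S$ and $S \cap F = \emptyset$. The forward implication is immediate from the definition of $\overline{\Ee}$-compatibility. For the converse one must construct $S$ by a transfinite process that simultaneously enforces closure under prefixes~(F1), closure under MCE-intersection~(F2), and the witness requirement of $\overline{\Ee}$-compatibility (adding some element of $\lambda E$ whenever $\lambda \in S$ and $E \in s(\lambda)\overline{\Ee}$), all while maintaining disjointness from $F$. The axioms (S1)--(S4) defining satiation are calibrated precisely so that, if at some stage no extension avoiding $F$ is possible, then $F$ itself lies in a set produced by the satiation operations, contradicting $F \notin \overline{\Ee}$. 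This is the content, expressed in filter language, of the analysis that the authors indicate has streamlined the ad hoc constructions of \cite{KumjianPaskEtAl:TAMSxx, Sims:IUMJ06}; the cocycle $c$ plays an entirely passive role in this combinatorial step, entering only as unit scalar prefactors when the filter is eventually used to build $T^S$.
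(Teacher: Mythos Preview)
Your ``if'' direction matches the paper's, and so does the combinatorial heart of the ``only if'' direction: the paper also invokes \cite[Lemma~4.7]{Sims:IUMJ06} to produce, for $v \notin F \notin \overline{\Ee}$, an $\overline{\Ee}$-compatible filter $S$ containing $r(F)$ and disjoint from $F\Lambda$, and you are right that the cocycle is passive in that step.

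The gap is in how you convert the filter into a $C^*$-algebraic witness. You propose to build a Cuntz--Krieger $(\Lambda,c;\overline{\Ee})$-family $T^S$ on a space $\Hh_S$ ``indexed by paths adapted to~$S$'' via a ``twisted shift''; but this is precisely the construction the paper's introduction flags as \emph{not} adapting readily from either \cite{KumjianPaskEtAl:TAMSxx} or \cite{Sims:IUMJ06}. Concretely: if $\Hh_S = \ell^2(S)$ it is not invariant under the $T_\mu$; if $\Hh_S = \ell^2(\Lambda s(S))$ it is invariant but relation~(CK) fails (for example on $h_{r(E)}$ for any $E \in \overline{\Ee}$); and if $\Hh_S$ is indexed by the shift-orbit of $S$, a formula $T_\mu h_T = \alpha(\mu,T)\, h_{\ell_\mu(T)}$ satisfying~(TCK2) forces $\alpha(\mu,\ell_\nu(T))\alpha(\nu,T) = c(\mu,\nu)\alpha(\mu\nu,T)$, and no natural choice of such an $\alpha$ presents itself---$c$ is a function of composable pairs in $\Lambda$, not of paths acting on filters. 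One can eventually manufacture such a representation through the twisted-groupoid regular representation, but that is substantially more machinery than you have sketched.

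The paper sidesteps the issue by never building a relative Cuntz--Krieger family out of $S$. It works entirely inside the Toeplitz representation $\pi_T$ on $\ell^2(\Lambda)$, where the twist $T_\mu h_\nu = c(\mu,\nu)h_{\mu\nu}$ is unambiguous, and proves the $\subseteq$ containment of Proposition~\ref{prp:->0 on filters}: every $a \in J_\Ee$ satisfies $\lim_{\lambda \in S}\pi_T(a)h_\lambda = 0$ along every $\overline{\Ee}$-compatible filter. Your filter $S$ then witnesses nonvanishing directly, because $\|T_{r(F)} h_\lambda\| = 1$ (respectively $\|\Delta(T)^F h_\lambda\| = 1$) for every $\lambda \in S$, so $s^c_\Tt(r(F))$ (respectively $\Delta(s^c_\Tt)^F$) cannot lie in $J_{\overline{\Ee}} = J_\Ee$. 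The filter enters only as a directed set along which to take a limit, never as the index set of a new Hilbert space, and that is what makes the twist harmless.
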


To prove the proposition, we need to be able to tell when  $a \in \Tt C^*(\Lambda, c)$
does not belong to $J_\Ee$. We present the requisite statement as a separate result
because we will use it again in Section~\ref{sec:simplicity} (see
Lemma~\ref{lem:cofinality necessary}).

\begin{prop}\label{prp:->0 on filters}
Let $\Lambda$ be a finitely aligned $k$-graph and let $c \in \Zcat2(\Lambda, \TT)$. Let
$\Ee \subseteq \FE(\Lambda)$. Then
\[\textstyle
J_\Ee = \{a \in \Tt C^*(\Lambda, c) :
    \lim_{\lambda \in S} \pi_T(a)h_\lambda = 0\text{ for every $\overline{\Ee}$-compatible
    filter $S$}\}.
\]
\end{prop}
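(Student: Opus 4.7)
Denote the right-hand side by $N$ and write $p_\lambda := s^c_\Tt(\lambda)s^c_\Tt(\lambda)^*$ for brevity. My plan is to prove both containments, starting with the observation that $N$ is a closed two-sided ideal of $\Tt C^*(\Lambda, c)$: closedness is an $\epsilon/2$-argument; the left-ideal property follows from $\|\pi_T(ya)h_\lambda\| \le \|y\|\,\|\pi_T(a)h_\lambda\|$; and for the right-ideal property I check on a spanning $y = s^c_\Tt(\alpha)s^c_\Tt(\beta)^*$ that $\pi_T(y)h_\lambda$ is either zero or a scalar multiple of $h_{\alpha\lambda'}$ (when $\lambda = \beta\lambda'$), and the map $\beta\lambda' \mapsto \alpha\lambda'$ transports $\overline{\Ee}$-compatible filters to $\overline{\Ee}$-compatible filters. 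The containment $J_\Ee \subseteq N$ then reduces to verifying each generator $\Delta(s^c_\Tt)^E$ with $E \in \Ee$: on an $\overline{\Ee}$-compatible filter $S$, either $r(E) \notin S$ (in which case (F1) forces $\pi_T(\Delta(s^c_\Tt)^E)h_\lambda = 0$ for every $\lambda \in S$) or $r(E) \in S$ (in which case $\overline{\Ee}$-compatibility provides some $\eta \in E \cap S$, and the factor $(p_{r(E)} - p_\eta)$ kills $h_\lambda$ for every $\lambda \in S$ past $\eta$).

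For the reverse containment $N \subseteq J_\Ee$, I first establish the algebraic stability identity $J_\Ee = J_{\overline{\Ee}}$ by showing each satiation operation preserves the ideal: (S1) is trivial; (S2) uses $\Delta(s^c_\Tt)^{\Ext(\mu;F)} = s^c_\Tt(\mu)^*\Delta(s^c_\Tt)^F s^c_\Tt(\mu)$, which follows from Lemma~\ref{lem:Delta commutation}; (S3) follows from $(p_{r(F)} - p_\lambda)(p_{r(F)} - p_{\lambda\lambda'}) = p_{r(F)} - p_\lambda$; and (S4) from $\prod_{\mu \in G}(p_{r(F)} - p_{\lambda\mu}) = (p_{r(F)} - p_\lambda) + s^c_\Tt(\lambda)\Delta(s^c_\Tt)^G s^c_\Tt(\lambda)^*$, obtained via the orthogonality of $p_{r(F)} - p_\lambda$ and $p_\lambda - p_{\lambda\mu}$. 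Arguing contrapositively, suppose $a \notin J_\Ee = J_{\overline{\Ee}}$. Replacing $a$ by $a^*a$, I may assume $a \ge 0$, and then apply Hahn--Banach to the nonzero quotient image followed by gauge-averaging (which preserves $J_{\overline{\Ee}}$) to obtain a gauge-invariant state $\psi$ on $\Tt C^*(\Lambda, c)$ with $\psi(a) > 0$ and $\psi|_{J_{\overline{\Ee}}} = 0$.

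Using $\psi$, I construct a cofinal increasing chain $\lambda_0 \le \lambda_1 \le \cdots$ with $\psi(p_{\lambda_n}) > 0$ for every $n$ and with the resulting filter $S$ being $\overline{\Ee}$-compatible. The inductive step processes the next pending pair $(k, E)$ from a dovetailed enumeration (where $\lambda_k \le \lambda_n$ and $E \in s(\lambda_k)\overline{\Ee}$): writing $\lambda_n = \lambda_k\mu$ and $E' := \Ext(\mu;E) \in s(\lambda_n)\overline{\Ee}$ (by (S2)), the element $s^c_\Tt(\lambda_n)\Delta(s^c_\Tt)^{E'}s^c_\Tt(\lambda_n)^* \in J_{\overline{\Ee}}$ rewrites as $p_{\lambda_n} - \bigvee_{\sigma \in E'} p_{\lambda_n\sigma} \in J_{\overline{\Ee}}$. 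Vanishing of $\psi$ on this element combined with subadditivity over the finite family $\{p_{\lambda_n\sigma}\}_{\sigma\in E'}$ then forces some $\sigma \in E'$ with $\psi(p_{\lambda_n\sigma}) > 0$, and I set $\lambda_{n+1} := \lambda_n\sigma$. By the definition of $\Ext$, $\lambda_{n+1}$ automatically extends some $\lambda_k\eta$ with $\eta \in E$, so the pair $(k, E)$ is processed and the resulting filter $S$ is $\overline{\Ee}$-compatible.

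The main obstacle is the final identification of $\psi$ as a cluster point of the concrete vector states $\phi_n(x) := \langle \pi_T(x)h_{\lambda_n}, h_{\lambda_n}\rangle$, which is what ensures $\pi_T(a)h_{\lambda_n} \not\to 0$. Gauge-invariance of $\psi$ means $\psi$ is determined by its values on the diagonal subalgebra generated by the $p_\mu$, and the recursive construction ensures $\psi(p_{\lambda_n})$ stays bounded below along the chain. A weak-$*$ compactness and diagonalisation argument, combined with the observation that each $\phi_n$ also vanishes on off-diagonals $s^c_\Tt(\mu)s^c_\Tt(\nu)^*$ with $d(\mu) \ne d(\nu)$ whose supports are not below $\lambda_n$, then extracts a subsequence $(\lambda_{n_k})$ along which $\phi_{n_k}(a) \to \psi(a) > 0$, witnessing $a \notin N$ as required.
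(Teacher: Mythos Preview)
Your argument for $J_\Ee \subseteq N$ is essentially the paper's, and your verification that $N$ is a closed two-sided ideal is fine (and matches what the paper does as part of its proof of the reverse containment). Your proof that $J_\Ee = J_{\overline{\Ee}}$ via (S1)--(S4) is correct and is precisely the content of the ``if'' implication in Proposition~\ref{prp:nonzero elements}.

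The genuine gap is in your final paragraph. There is no reason for the Hahn--Banach state $\psi$ to be a weak-$*$ cluster point of the concrete vector states $\phi_n = \langle\,\pi_T(\cdot)h_{\lambda_n},h_{\lambda_n}\rangle$. You obtained $\psi$ abstractly, with no link to the representation $\pi_T$; the filter $S$ was then built from $\psi$, but the values $\phi_n(a)$ carry no a~priori relation to $\psi(a)$. Concretely: each $\phi_n$ factors through the diagonal expectation $\Theta$ (one checks $\phi_n(s_\mu s_\nu^*) = \delta_{\mu,\nu}\phi_n(p_\mu)$), whereas a gauge-invariant $\psi$ need only factor through the expectation onto the AF core, not onto the diagonal. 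Even restricted to the diagonal, the $\phi_n$ are Dirac point-masses determined entirely by the chain $(\lambda_n)$, and any weak-$*$ cluster point of these is forced by $S$; it is not something you get to choose via Hahn--Banach. There is also a smaller issue earlier: gauge-averaging $\psi$ replaces $\psi(a)$ by $\psi(\Phi^{\gamma^c}(a))$, which need not remain positive; this is fixable by replacing $a$ with $\Phi^{\gamma^c}(a^*a)$ before applying Hahn--Banach, but the cluster-point claim is not.

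The paper avoids this difficulty entirely by taking a different route for $N \subseteq J_\Ee$: it shows $N$ is gauge-invariant (via the unitaries $U_z h_\lambda = z^{d(\lambda)}h_\lambda$ implementing $\gamma$ in $\pi_T$), then uses Lemma~\ref{lem:nonzero under piT} to see that $H_N = \emptyset$ and $\Bb_N = \overline{\Ee}$, and finally invokes the full classification of gauge-invariant ideals (Theorem~\ref{thm:g-i ideals}) to conclude $N = J_{\overline{\Ee}} = J_\Ee$. Your approach, if it worked, would be more self-contained; but as written it does not close, and the paper's use of the ideal classification is exactly what replaces your missing final step.
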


Here we only need the ``$\subseteq$" containment --- we will use it immediately to
establish Proposition~\ref{prp:nonzero elements}, which we will use in turn to prove the
gauge-invariant uniqueness theorem and then our characterisation of the gauge-invariant
ideals in $C^*(\Lambda, c; \Ee)$. With this catalogue of gauge-invariant ideals in hand,
it is then easy to establish the reverse inclusion. So we prove the ``$\subseteq$"
inclusion now, and defer the proof of the ``$\supseteq$" inclusion until after
Theorem~\ref{thm:g-i ideals}.

\begin{proof}[Proof of $\subseteq$ in Proposition~\ref{prp:->0 on filters}]
Fix $v \in \Lambda^0$, an element $E \in \Ee$ with $r(E) = v$ and an
$\overline{\Ee}$-compatible filter $S$. We claim that $\Delta(T)^E h_\lambda = 0$ for
large $\lambda \in S$. To see this, we consider two cases. First suppose that $v \not\in
S$. Then~(F1) ensures that $S \cap v\Lambda =\emptyset$ and so $T_v h_\lambda = 0$ for
all $\lambda \in S$. Since $\Delta(T)^E \le T_v$, it follows that $\Delta(T)^E h_\lambda
= 0$ for all $\lambda \in S$. Now suppose that $v \in S$. Since $S$ is
$\overline{\Ee}$-compatible and $E \in \Ee$, there exists $\lambda \in E \cap S$. Hence
$\Delta(T)^E \le T_v - T_\lambda T^*_\lambda$. For $\mu \in S$ with $\lambda \le \mu$, we
therefore have $\Delta(T)^E h_\mu = 0$. Hence $\Delta(T)^E h_\lambda = 0$ for large
$\lambda \in S$, proving the claim. It is now elementary to deduce that any finite linear
combination of the form $a = \sum_{\mu,\nu \in F} a_{\mu,\nu} s^c_\Tt(\mu)
\Delta(s^c_\Tt)^E s^c_\Tt(\nu)^*$ where the $E$ belong to $\Ee$ satisfies $\pi_T(a)
h_\lambda = 0$ for large $\lambda \in E$. A continuity argument (details appear in
\cite[Lemma~3.4.14]{BenThesis}) using this and~\eqref{eq:JE descr} then completes the
proof.
\end{proof}

\begin{lem}\label{lem:nonzero under piT}
Let $\Lambda$ be a finitely aligned $k$-graph and let $c \in \Zcat2(\Lambda, \TT)$. Let
$\Ee \subseteq \FE(\Lambda)$. For each $v \in \Lambda^0$ there exists an
$\overline{\Ee}$-compatible filter $S$ such that $\|T_v h_\lambda\| = 1$ for all $\lambda
\in S$, and for each $E \in \FE(\Lambda) \setminus \overline{\Ee}$, there exists an
$\overline{\Ee}$-compatible filter $S$ such that $\|\Delta(T)^E h_\lambda\| = 1$ for all
$\lambda \in S$.
\end{lem}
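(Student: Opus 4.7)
The plan is, in each case, to construct an $\overline{\Ee}$-compatible filter $S \subseteq \Lambda$ with the required properties, and then read off the norm equalities from the defining formula $T_\mu h_\nu = \delta_{s(\mu), r(\nu)} c(\mu,\nu) h_{\mu\nu}$. This formula gives $T_v h_\lambda = h_\lambda$ when $r(\lambda) = v$ (zero otherwise) and $T_\mu T_\mu^* h_\lambda = h_\lambda$ when $\mu \le \lambda$ (zero otherwise). For a filter $S$ containing $v$, axiom (F2) applied to $\MCE(v, \lambda) \subseteq v\Lambda \cap \lambda\Lambda$ forces $r(\lambda) = v$ for every $\lambda \in S$, so $\|T_v h_\lambda\| = 1$; and if in addition $E \cap S = \emptyset$, then (F1) gives $e \not\le \lambda$ for every $e \in E$ and $\lambda \in S$, so $\|\Delta(T)^E h_\lambda\| = 1$.

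For the first assertion I would construct an $\overline{\Ee}$-compatible filter through any given $v$ by a standard greedy countable extension: enumerate the countable set $\{(\mu, F) : \mu \in \Lambda, F \in s(\mu)\overline{\Ee}\}$ as $\{(\mu^{(n)}, F^{(n)})\}_{n \ge 1}$ and inductively build a chain $v = \nu_0 \le \nu_1 \le \cdots$ in $\Lambda$, handling one requirement per stage. At stage $n$, if $\mu^{(n)} \le \nu_{n-1}$ with $\nu_{n-1} = \mu^{(n)}\rho$ and $\rho \notin F^{(n)} \Lambda$, then (S2) gives $\Ext(\rho; F^{(n)}) \in s(\rho)\overline{\Ee}$, hence nonempty, so I pick any $\alpha_n$ in it and set $\nu_n := \nu_{n-1}\alpha_n$; otherwise $\nu_n := \nu_{n-1}$. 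The filter $S := \{\mu : \mu \le \nu_n \text{ for some } n\}$ is then an $\overline{\Ee}$-compatible filter containing $v$.

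For the second assertion, with $v := r(E)$ and $E \in \FE(\Lambda) \setminus \overline{\Ee}$, I would run the same construction but additionally require the invariant $\nu_n \notin E\Lambda$ at every stage; it holds at $n = 0$ because $E \cap \Lambda^0 = \emptyset$. The main obstacle is proving this invariant can always be maintained, i.e., that some $\alpha \in \Ext(\rho; F^{(n)})$ with $\nu_{n-1}\alpha \notin E\Lambda$ can always be chosen. I would argue by contradiction: if every such $\alpha$ fails, the factorisation property gives $\Ext(\rho; F^{(n)}) \subseteq \Ext(\nu_{n-1}; E) \cdot \Lambda$, where $\Ext(\nu_{n-1}; E)$ is a finite exhaustive subset of $s(\nu_{n-1})\Lambda$ disjoint from $\Lambda^0$ (using $\nu_{n-1} \notin E\Lambda$). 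Applying (S1) to enlarge the $\overline{\Ee}$-set $\Ext(\rho; F^{(n)})$ by the finitely many elements of $\Ext(\nu_{n-1}; E)$ and then (S3) to trim any strict extensions would yield $\Ext(\nu_{n-1}; E) \in s(\nu_{n-1})\overline{\Ee}$. To convert this into the required contradiction $E \in v\overline{\Ee}$, I would propagate the conclusion backwards along the prefix chain $v \le \cdots \le \nu_{n-1}$ using the identity $\Ext(\mu\sigma; E) = \Ext(\sigma; \Ext(\mu; E))$ together with iterated applications of (S4), assembling the required $\overline{\Ee}$-sets from the history of choices $\alpha_m$ made at earlier stages.

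The main obstacle is precisely this backward propagation step, since the satiation calculus contains no formal inverse to (S2) that would let us pass from $\Ext(\mu; F) \in \overline{\Ee}$ to $F \in \overline{\Ee}$. The argument must instead exploit the specific structure of the greedy construction — in particular, that each $\alpha_m \in \Ext(\rho_{m-1}; F^{(m)})$ was chosen so that $\nu_m \notin E\Lambda$, giving a delicate combinatorial handle on how the intermediate $\overline{\Ee}$-sets interact with $\Ext(\nu_m; E)$ via (S4).
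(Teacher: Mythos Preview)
Your overall strategy coincides with the paper's: both assertions reduce to constructing an $\overline{\Ee}$-compatible filter through $v$ (respectively, through $v = r(E)$ and avoiding $E\Lambda$), and the paper simply invokes the argument of \cite[Lemma~4.7]{Sims:IUMJ06} for that construction. Your derivation of the norm equalities from the filter axioms is correct.

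However, the backward propagation you propose cannot be completed, and the obstacle is not merely technical. When you try to apply (S4) at $\alpha_{n-1} \in G_{n-1}$ using the set $\Ext(\nu_{n-1}; E) \in \overline{\Ee}$ you have just produced, the other elements of $G_{n-1} \setminus \{\alpha_{n-1}\}$ remain, and you have no $\overline{\Ee}$-information about them whatsoever; the history of choices records only the one $\alpha_m$ you picked at each stage, not the sets you would need at the siblings. The fix is not to propagate backwards but to carry a stronger invariant forward from the outset: maintain $\Ext(\nu_n; E) \in \FE(\Lambda) \setminus \overline{\Ee}$ at every stage. This subsumes $\nu_n \notin E\Lambda$ (since $\nu_n \in E\Lambda$ exactly when $s(\nu_n) \in \Ext(\nu_n; E)$, forcing the set out of $\FE(\Lambda)$), and it holds at $n=0$ because $\Ext(v;E) = E$. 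At stage $n$, set $E' := \Ext(\nu_{n-1}; E)$ and suppose for contradiction that for every $\alpha \in G := \Ext(\rho; F^{(n)})$ either $\alpha \in E'\Lambda$ or $\Ext(\alpha; E') \in \overline{\Ee}$. For each $\alpha$ of the second kind apply (S4) to replace $\alpha$ by $\alpha \cdot \Ext(\alpha; E') \subseteq E'\Lambda$; the resulting set $G' \in \overline{\Ee}$ satisfies $G' \subseteq E'\Lambda$, and then your own (S1)+(S3) manoeuvre gives $E' \in \overline{\Ee}$, contradicting the invariant \emph{at that stage}. No global propagation is needed. This is precisely the mechanism behind \cite[Lemma~4.7]{Sims:IUMJ06}.

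One minor point on the first assertion: your enumeration must revisit each pair $(\mu, F)$ infinitely often (or, equivalently, at each stage handle the least-indexed still-unresolved pair with $\mu \le \nu_{n-1}$), since $\mu$ may enter the chain only after the single stage at which $(\mu, F)$ was listed.
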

\begin{proof}
Fix $v \in \Lambda^0$. The argument of \cite[Lemma~4.7]{Sims:IUMJ06} shows that there
exists an $\overline{\Ee}$-compatible filter $S$ that contains $v$. Hence $\|T_v
h_\lambda\| = \|h_\lambda\| = 1$ for all $\lambda \in S$. Now fix $E \in \FE(\Lambda)
\setminus \Ee$. Then the argument of \cite[Lemma~4.7]{Sims:IUMJ06} implies that there is
an $\overline{\Ee}$-compatible filter $S$ such that $r(E) \in S$ but $E\Lambda \cap S =
\emptyset$. Thus $T_\mu T^*_\mu h_\lambda = 0$ for all $\mu \in E$ and $\lambda \in S$,
and so $\|\Delta(T)^E h_\lambda\| = \|T_{r(E)} h_\lambda\| = 1$ for all $\lambda \in S$.
\end{proof}

\begin{proof}[Proof of Proposition~\ref{prp:nonzero elements}]
For the ``if" implication, observe that if $v \in F$ then $\Delta_v \le q_v - q_v = 0$.
To see that $F \in \overline{\Ee}$ implies $\Delta(s_\Ee^c)^F = 0$, one checks that the
calculations in \cite{Sims:IUMJ06} that establish the corresponding statement for the
untwisted $C^*$-algebra $C^*(\Lambda; \Ee)$ are also valid in the twisted $C^*$-algebra.
Since we clearly have $J_{\Ee} \subseteq J_{\overline{\Ee}}$ we deduce from this that
$J_\Ee = J_{\overline{\Ee}}$, and therefore that $C^*(\Lambda, c; \Ee) = C^*(\Lambda, c;
\overline{\Ee})$.

For the ``only if'' implication, first observe that we have just seen that
$J_{\overline{\Ee}} = J_{\Ee}$. So it suffices to show that $s_\Tt^c(v) \not \in
J_{\overline{\Ee}}$ for all $v \in \Lambda^0$ and that if $E \subset v\Lambda$ is finite,
does not contain $v$ and does not belong to $\overline{\Ee}$, then $\Delta(s_\Tt^c)^E
\not\in J_{\overline{\Ee}}$.

The ``$\subseteq$" containment in Proposition~\ref{prp:->0 on filters} and the first
assertion of Lemma~\ref{lem:nonzero under piT} combine to show that $s_\Tt^c(v) \not\in
J_{\overline{\Ee}}$ for each $v \in \Lambda^0$. Likewise, the ``$\subseteq$" containment
in Proposition~\ref{prp:->0 on filters} combined with the second assertion of
Lemma~\ref{lem:nonzero under piT} shows that $\Delta(s_\Tt^c)^E \not \in
J_{\overline{\Ee}}$ for all $E \in \FE(\Lambda) \setminus \overline{\Ee}$. Finally,
suppose that $F \subseteq v\Lambda \setminus \{v\}$ is finite and does not belong to
$\FE(\Lambda)$. Then there exists $\lambda \in v\Lambda$ such that $\Ext(\lambda; F) =
\emptyset$. It is not hard to check (see, for example, the argument of
\cite[Lemma~4.4]{Sims:IUMJ06}) that $\{\mu : \mu\mu' \in \lambda S \text{ for some }
\mu'\}$ is an $\overline{\Ee}$-compatible filter which does not intersect $F$. So as
above, we have $\|\pi_T(\Delta(s_\Tt^c)^F) h_\lambda\| = 1$ for all $\lambda \in S$ and
yet another application of the ``$\subseteq$" containment in Proposition~\ref{prp:->0 on
filters} implies that $\Delta(s_\Tt^c)^F \not \in J_{\overline{\Ee}}$.
\end{proof}

Given a finite subset $E$ of $\Lambda$ and an element $\mu \in E$, we write $T(E;\mu) :=
\{\mu' \in s(\mu)\Lambda\setminus \{s(\mu)\} : \mu\mu' \in E\}$.

Recall from \cite{RaeburnSimsEtAl:JFA04} that if $E$ is a finite subset of a finitely
aligned $k$-graph $\Lambda$, then there is a finite subset $F$ of $\Lambda$ which
contains $E$ and has the property that if $\mu,\nu,\sigma, \tau \in F$ with $d(\mu) =
d(\nu)$ and $d(\sigma) = d(\tau)$, then $\mu\alpha$ and $\tau\beta$ belong to $F$ for
every $\nu\alpha = \sigma\beta \in \MCE(\nu,\sigma)$. The smallest such set is denoted
$\Pi E$ and is closed under minimal common extensions (just take $\mu = \nu$ and $\tau =
\sigma$). Suppose that $E = \Pi E$. The defining property of $\Pi E$ ensures that $T(E;
\mu) = T(E; \nu)$ whenever $\mu,\nu \in E$ satisfy $d(\mu) = d(\nu)$. Fix $c \in
\Zcat2(\Lambda, \TT)$ and a Toeplitz-Cuntz-Krieger $(\Lambda, c)$-family $t$. For $\mu,
\nu \in E$ (possibly equal) with $d(\mu) = d(\nu)$, we write
\[
\Theta(t)^E_{\mu,\nu} = t_\mu \Delta(t)^{T(E;\mu)} t_\nu^*.
\]

\begin{lem}\label{lem:mx units}
Let $\Lambda$ be a finitely aligned $k$-graph, let $c \in \Zcat2(\Lambda, \TT)$ and let
$t$ be a Toeplitz-Cuntz-Krieger $(\Lambda, c)$-family. Suppose that $E \subseteq \Lambda$
is finite and satisfies $E = \Pi E$. Then $M(t)_E := \lsp\{t_\mu t^*_\nu : \mu,\nu \in E,
d(\mu)= d(\nu)\}$ is a finite dimensional $C^*$-subalgebra of $C^*(t)$, and
$\{\Theta(t)^E_{\mu,\nu} : \mu,\nu \in E, d(\mu) = d(\nu), s(\mu)=s(\nu),
\Delta(t)^{T(E;\mu)} \not= 0\}$ is a family of nonzero matrix units spanning $M(t)_E$.
\end{lem}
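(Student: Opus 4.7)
The plan has three parts: show that $M(t)_E$ is a finite-dimensional $*$-subalgebra (hence a $C^*$-subalgebra) of $C^*(t)$; verify that the $\Theta(t)^E_{\mu,\nu}$ satisfy the matrix-unit relations and are nonzero under the condition $\Delta(t)^{T(E;\mu)} \ne 0$; and finally show that they span $M(t)_E$.

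For the first part, $M(t)_E$ is by definition spanned by finitely many elements and is manifestly $*$-closed, so I only need closure under multiplication. Lemma~\ref{lem:TCK consequences} expresses $(t_\mu t_\nu^*)(t_\eta t_\zeta^*)$ as a scalar-weighted sum of terms $t_{\mu\alpha} t_{\zeta\beta}^*$ indexed by $\nu\alpha = \eta\beta \in \MCE(\nu,\eta)$. When $d(\mu) = d(\nu)$ and $d(\eta) = d(\zeta)$, every such $\mu\alpha$ and $\zeta\beta$ has degree $d(\nu) \vee d(\eta)$, and the defining closure property of $\Pi E$ places both in $E$, so the product remains in $M(t)_E$. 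Finite-dimensional $*$-subalgebras of $C^*$-algebras are automatically norm-closed.

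For the matrix-unit relations, $\Theta(t)^E_{\mu,\nu}{}^* = \Theta(t)^E_{\nu,\mu}$ is immediate from the definition. I first check (as the text preceding the lemma asserts) that $T(E;\mu) = T(E;\nu)$ whenever $\mu,\nu \in E$ share both degree and source: given $\mu' \in T(E;\mu)$, applying $\Pi E$-closure to the tuple $(\mu\mu', \mu\mu', \mu, \nu)$ with the trivial element $\mu\mu' \in \MCE(\mu\mu', \mu)$ produces $\nu\mu' \in E$. For the product $\Theta(t)^E_{\mu,\nu} \Theta(t)^E_{\eta,\zeta}$ I expand $t_\nu^* t_\eta$ using Lemma~\ref{lem:TCK consequences}. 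When $\nu = \eta$ the sum collapses to one term, and since $\Delta(t)^{T(E;\mu)} = \Delta(t)^{T(E;\zeta)}$ is a projection below $t_{s(\nu)}$, the product reduces to $\Theta(t)^E_{\mu,\zeta}$. When $\nu \ne \eta$, every $\nu\alpha = \eta\beta \in \MCE(\nu,\eta)$ satisfies $\alpha \ne s(\nu)$ or $\beta \ne s(\eta)$; applying $\Pi E$-closure to $(\nu,\nu,\eta,\eta)$ forces $\nu\alpha, \eta\beta \in E$, so the corresponding $\alpha$ lies in $T(E;\nu) = T(E;\mu)$ or $\beta$ lies in $T(E;\eta) = T(E;\zeta)$, and then the identity $\Delta(t)^F t_\lambda = 0$ for $\lambda \in F$ (immediate from the product formula for $\Delta(t)^F$) kills the term. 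Nonzero-ness is immediate: a short computation gives $\Theta(t)^E_{\mu,\nu}{}^* \Theta(t)^E_{\mu,\nu} = t_\nu \Delta(t)^{T(E;\mu)} t_\nu^*$, and this has norm $\|\Delta(t)^{T(E;\mu)}\|$ because $t_\nu$ is a partial isometry with $t_\nu^* t_\nu = t_{s(\nu)} \ge \Delta(t)^{T(E;\mu)}$.

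For spanning, I proceed by backward induction on $d(\mu)$ in the (finite) poset $\{d(\lambda) : \lambda \in E\}$. If $d(\mu)$ is maximal, then $T(E;\mu) = \emptyset$, so $\Delta(t)^{T(E;\mu)} = t_{s(\mu)}$ and $\Theta(t)^E_{\mu,\nu} = t_\mu t_\nu^*$ whenever $s(\mu) = s(\nu)$ (while $t_\mu t_\nu^* = 0$ otherwise by (TCK1) and (TCK3)). For the inductive step, write $t_\mu t_\nu^* = \Theta(t)^E_{\mu,\nu} + t_\mu\bigl(t_{s(\mu)} - \Delta(t)^{T(E;\mu)}\bigr) t_\nu^*$, expand $t_{s(\mu)} - \Delta(t)^{T(E;\mu)}$ by inclusion-exclusion into an alternating sum over nonempty $G \subseteq T(E;\mu)$ of $\prod_{\lambda \in G} t_\lambda t_\lambda^*$, and reduce each such product to $\sum_{\tau \in \MCE(G)} t_\tau t_\tau^*$ by iterating (TCK4). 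Each resulting summand is a scalar multiple of $t_{\mu\tau} t_{\nu\tau}^*$ with $\mu\tau, \nu\tau \in E$ (by iterated $\Pi E$-closure) and $d(\mu\tau) > d(\mu)$, so the induction hypothesis applies. The principal obstacle in the argument is the vanishing in the $\nu \ne \eta$ case of the matrix-unit multiplication, which hinges on the way $\Pi E$-closure forces the MCE representatives back into $E$.
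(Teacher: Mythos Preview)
Your proof is correct. The paper's own argument differs from yours mainly in the technical machinery it invokes. For the matrix-unit multiplication, the paper appeals to the argument of \cite[Lemma~3.9(ii)]{RaeburnSimsEtAl:JFA04}, using the commutation relation $\Delta(t)^F t_\mu = t_\mu \Delta(t)^{\Ext(\mu;F)}$ of Lemma~\ref{lem:Delta commutation} (twice) to move the $\Delta$-projections past the middle factor, whereas you bypass Lemma~\ref{lem:Delta commutation} entirely by expanding $t_\nu^* t_\eta$ directly via Lemma~\ref{lem:TCK consequences} and then using $\Pi E$-closure to show that each $\alpha$ or $\beta$ appearing already lies in the relevant $T(E;\cdot)$, killing the term. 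For the spanning assertion, the paper again cites \cite{RaeburnSimsEtAl:JFA04} (Proposition~3.5, Corollaries~3.7 and~3.11) to obtain the explicit decomposition
\[
t_\mu t^*_\nu = \sum_{\mu\alpha \in E} c(\mu,\alpha)\overline{c(\nu,\alpha)}\,\Theta(t)^E_{\mu\alpha,\nu\alpha},
\]
while your backward induction on $d(\mu)$ via inclusion--exclusion reaches the same conclusion without that external input. The paper's route is shorter because it outsources the work and yields a closed-form formula that is reused later (equation~\eqref{eq:tmutnu=thetasum}); your route is more self-contained and makes the role of the $\Pi E$-closure property more transparent.
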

\begin{proof}
Using Lemma~\ref{lem:TCK consequences}, it is easy to see that $\lsp\{t_\mu t^*_\nu :
\mu,\nu \in E, d(\mu) = d(\nu)\}$ is closed under multiplication and hence a subalgebra
of $C^*(t)$. Since $T(E;\mu) = T(E;\nu)$ whenever $d(\mu) = d(\nu)$ and $s(\mu)=s(\nu)$,
we have $(\Theta(t)^E_{\mu,\nu})^* = \Theta(t)^E_{\nu,\mu}$, so $M(t)_E$ is a
$^*$-subalgebra of $C^*(t)$. It is finite-dimensional by definition, and then it is
automatically norm-closed and hence a $C^*$-subalgebra.

The argument of \cite[Lemma~3.9(ii)]{RaeburnSimsEtAl:JFA04}, using Lemma~\ref{lem:Delta
commutation} twice in place of \cite[Lemma~3.10]{RaeburnSimsEtAl:JFA04} shows that
$\Theta(t)^E_{\mu,\nu} \Theta(t)^E_{\eta,\xi} = \delta_{\nu,\eta} \Theta(t)^E_{\mu,\xi}$.
Since $\Delta(t)^{T(E;\mu)} = t^*_\mu \Theta(t)^E_{\mu,\nu} t_\nu$ and since
$\Theta(t)^E_{\mu,\nu}  = t_\mu \Delta(t)^{T(E;\mu)} t^*_\nu$, we have
$\Theta(t)^E_{\mu,\nu} = 0 \iff \Delta(t)^{T(E;\mu)} = 0$. The arguments of
\cite[Proposition~3.5 and Corollary~3.7]{RaeburnSimsEtAl:JFA04} only involve products of
elements of the form $t_\mu t^*_\mu$, and these satisfy the same relations in a
Toeplitz-Cuntz-Krieger $(\Lambda, c)$-family as in a Toeplitz-Cuntz-Krieger
$\Lambda$-family. So the proof of \cite[Corollary~3.7]{RaeburnSimsEtAl:JFA04} implies
that $t_\mu t^*_\mu = \sum_{\mu\mu' \in E} \Delta(t)^{T(E;\mu\mu')}$ for $\mu \in E$. Now
we follow the proof of \cite[Corollary~3.11]{RaeburnSimsEtAl:JFA04} to see that
\begin{equation}\label{eq:tmutnu=thetasum}
t_\mu t^*_\nu = \sum_{\mu\alpha \in E} c(\mu, \alpha)\overline{c(\nu,\alpha)}
\Theta(t)^E_{\mu\alpha,\nu\alpha}.
\end{equation}
Hence the $\Theta(t)^E_{\mu,\nu}$ span $M(t)_E$.
\end{proof}

\begin{thm}\label{thm:core injectivity}
Let $\Lambda$ be a finitely aligned $k$-graph, let $c \in \Zcat2(\Lambda, \TT)$, and let
$\Ee$ be a satiated subset of $\FE(\Lambda)$. Suppose that $t$ is a relative
Cuntz-Krieger $(\Lambda, c; \Ee)$-family. The induced homomorphism $\pi^\Ee_t :
C^*(\Lambda, c; \Ee) \to C^*(t)$ restricts to an injective homomorphism of $C^*(\Lambda,
c; \Ee)^{\gamma^c}$ if and only if every $t_v$ is nonzero, and $\Delta(t)^F \not= 0$ for
all $F \in \FE(\Lambda) \setminus \Ee$.
\end{thm}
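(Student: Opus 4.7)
The plan is to establish the theorem by exploiting the fact that the core $C^*(\Lambda, c; \Ee)^{\gamma^c} = \clsp\{s_\Ee^c(\mu) s_\Ee^c(\nu)^* : d(\mu) = d(\nu)\}$ is the closure of the directed union $\bigcup_{E = \Pi E} M(s_\Ee^c)_E$ of the finite-dimensional subalgebras built in Lemma~\ref{lem:mx units} (any generator $s_\Ee^c(\mu) s_\Ee^c(\nu)^*$ of the core belongs to $M(s_\Ee^c)_{\Pi\{\mu,\nu\}}$). Since an injective $^*$-homomorphism between $C^*$-algebras is isometric, and since each $M(s_\Ee^c)_E$ is finite-dimensional, it suffices to prove $\pi^\Ee_t\big|_{M(s_\Ee^c)_E}$ is injective for every finite $E = \Pi E$: from there, $\pi_t^\Ee$ is isometric on the directed union and hence on its closure.

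The ``only if'' direction is immediate: the elements $s_\Ee^c(v)$ and $\Delta(s_\Ee^c)^F$ are fixed by $\gamma^c$, and Proposition~\ref{prp:nonzero elements} together with the assumption that $\Ee$ is satiated guarantees they are nonzero when $F \in \FE(\Lambda) \setminus \Ee$; since $\pi_t^\Ee$ sends them to $t_v$ and $\Delta(t)^F$, injectivity on the core forces the stated conditions. For the ``if'' direction, Lemma~\ref{lem:mx units} presents $M(s_\Ee^c)_E$ as a finite direct sum of matrix algebras, with the summands indexed by the distinct types $(d(\mu), s(\mu), T(E;\mu))$ for which $\Delta(s_\Ee^c)^{T(E;\mu)} \neq 0$, and with $\pi^\Ee_t$ carrying the matrix unit $\Theta(s_\Ee^c)^E_{\mu,\nu}$ to $\Theta(t)^E_{\mu,\nu}$. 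Injectivity of the restriction therefore reduces to verifying that $\Theta(t)^E_{\mu,\mu} = t_\mu \Delta(t)^{T(E;\mu)} t_\mu^* \neq 0$ whenever $\Delta(s_\Ee^c)^{T(E;\mu)} \neq 0$. Compressing by $t_\mu^*$ on the left and $t_\mu$ on the right and using (TCK3) together with $\Delta(t)^{T(E;\mu)} \leq t_{s(\mu)}$ gives $t_\mu^* \Theta(t)^E_{\mu,\mu} t_\mu = \Delta(t)^{T(E;\mu)}$, so the question further reduces to showing $\Delta(t)^{T(E;\mu)} \neq 0$.

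By Proposition~\ref{prp:nonzero elements} and satiation of $\Ee$, the hypothesis $\Delta(s_\Ee^c)^{T(E;\mu)} \neq 0$ forces $s(\mu) \notin T(E;\mu)$ and $T(E;\mu) \notin \Ee$. Splitting into two subcases: if $T(E;\mu) \in \FE(\Lambda) \setminus \Ee$ then the standing hypothesis on $t$ directly gives $\Delta(t)^{T(E;\mu)} \neq 0$; if instead $T(E;\mu) \notin \FE(\Lambda)$, then there exists $\lambda \in s(\mu)\Lambda$ with $\MCE(\lambda, \mu') = \emptyset$ for every $\mu' \in T(E;\mu)$, so that (TCK4) yields $(t_{s(\mu)} - t_{\mu'} t_{\mu'}^*) t_\lambda = t_\lambda$ for every such $\mu'$, whence $\Delta(t)^{T(E;\mu)} t_\lambda = t_\lambda$, and this is nonzero since $t_\lambda^* t_\lambda = t_{s(\lambda)}$ is nonzero by hypothesis. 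The main obstacle, and the reason the argument requires both hypotheses simultaneously, is precisely this second subcase: the exhaustiveness conclusion is not automatic from the matrix-unit framework, and handling $T(E;\mu) \notin \FE(\Lambda)$ is what forces the additional assumption that every $t_v$ is nonzero.
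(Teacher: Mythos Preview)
Your proof is correct and follows essentially the same route as the paper's: reduce to the finite-dimensional blocks $M(s_\Ee^c)_E$ via the directed-union description of the core, use the matrix-unit structure from Lemma~\ref{lem:mx units} to reduce injectivity to the nonvanishing of $\Delta(t)^{T(E;\mu)}$, and handle the two cases $T(E;\mu) \in \FE(\Lambda)\setminus\Ee$ and $T(E;\mu) \notin \FE(\Lambda)$ just as the paper does. The only cosmetic differences are that the paper argues the contrapositive ($\Delta(t)^{T(E;\lambda)} = 0 \Rightarrow \Delta(s_\Ee^c)^{T(E;\lambda)} = 0$) and in the non-exhaustive case multiplies by the range projection $q_\lambda$ rather than by $t_\lambda$ itself.
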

\begin{proof}
Proposition~\ref{prp:nonzero elements} implies that if $t_v = 0$ for some $v \in E^0$ or
$\Delta(t)^F = 0$ for some $F \in \FE(\Lambda) \setminus \Ee$, then $\pi^\Ee_t$ is not
injective on $C^*(\Lambda)^\gamma$.

Now suppose that each $t_v \not= 0$ and that $\Delta(t)^F \not= 0$ whenever $F \in
\FE(\Lambda) \setminus \Ee$. Since every finite subset of $\Lambda$ is contained in a
finite subset $E$ satisfying $E = \Pi E$, we have
\[
C^*(\Lambda, c; \Ee)^{\gamma^c}
	= \overline{\bigcup_{\Pi E = E} M(s_\Ee^c)^E},
\]
and so it suffices to show that $\pi^\Ee_t$ is injective (and hence isometric) on each
$M(s_\Ee^c)^E$. Fix $E$ such that $\Pi E = E$. Since matrix algebras are simple,
Lemma~\ref{lem:mx units} implies that it suffices to show that $\Delta(t)^{T(E;\lambda)}
= 0$ implies $\Delta(s_\Ee^c)^{T(E;\lambda)} = 0$ for each $\lambda \in E$. We consider
two cases. If $T(E;\lambda) \in \FE(\Lambda)$, then $\Delta(t)^{T(E;\lambda)} = 0$
implies $T(E;\lambda) \in \Ee$ by hypothesis, and then $\Delta(s_\Ee^c)^{T(E;\lambda)} =
0$ as well. Now suppose that $T(E;\lambda) \not\in \FE(\Lambda)$. It suffices to show
that $\Delta(t)^{T(E;\lambda)} \not= 0$. Since $T(E;\lambda) \not\in \FE(\Lambda)$ and
$T(E;\lambda) \cap \Lambda^0 = \emptyset$, the set $T(E;\lambda)$ is not exhaustive. Fix
$\mu \in s(\lambda)\Lambda$ such that $\MCE(\mu,\alpha) = \emptyset$ for every $\alpha
\in T(E;\lambda)$. Since $t^*_\mu t_\mu = t_{s(\mu)}$ is nonzero, $q_\mu=t_\mu t^*_\mu$
is also nonzero. Lemma~\ref{lem:TCK consequences} implies that $q_\alpha q_\mu = 0$ for
all $\alpha \in T(E;\lambda)$. Since the $q_\eta$ all commute and $q_\mu$ is a
projection, we deduce that
\[
\Delta(t)^{T(E;\lambda)} q_\mu
	= \prod_{\alpha \in T(E;\lambda)} \big((t_{s(\lambda)} -q_\alpha)q_\mu\big)
	=q_\mu \not= 0.
\]
Hence $\Delta(t)^{T(E;\lambda)} \not= 0$.
\end{proof}

From this flow a number of uniqueness theorems, all based on the following standard idea
from~\cite{Cuntz:CMP77}.

\begin{lem}\label{lem:usual argument}
Let $A$ be a $C^*$-algebra, $\Phi : A \to A$ a faithful conditional expectation, and $\pi
: A \to B$ a $C^*$-homomorphism. Suppose that there is a linear map $\Psi : B \to B$ such
that $\Psi \circ \pi = \pi \circ \Phi$. Then $\pi$ is injective if and only if
$\pi|_{\Phi(A)}$ is injective.
\end{lem}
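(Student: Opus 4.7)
The forward implication is immediate: any restriction of an injective map is injective, so if $\pi$ is injective then $\pi|_{\Phi(A)}$ certainly is.

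For the backward direction, the plan is to run the standard Cuntz-style argument. Suppose $\pi|_{\Phi(A)}$ is injective, and take $a \in A$ with $\pi(a) = 0$. The goal is to leverage positivity: since $\pi$ is a $^*$-homomorphism, $\pi(a^*a) = \pi(a)^*\pi(a) = 0$, so I would then push this through the intertwining relation by computing
\[
\pi(\Phi(a^*a)) = \Psi(\pi(a^*a)) = \Psi(0) = 0.
\]
Since $\Phi(a^*a)$ lies in $\Phi(A)$ and the restriction $\pi|_{\Phi(A)}$ is injective by hypothesis, this forces $\Phi(a^*a) = 0$. Faithfulness of the conditional expectation $\Phi$ then yields $a^*a = 0$, hence $a = 0$, as required.

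There is essentially no obstacle here: the proof is two lines once the intertwining $\Psi \circ \pi = \pi \circ \Phi$ is applied to the positive element $a^*a$. The only subtlety worth flagging is that we really need $\pi$ to be a $^*$-homomorphism (so that $\pi(a^*a)$ is the zero positive element whenever $\pi(a) = 0$), and we rely on faithfulness of $\Phi$ in the sense that $\Phi(b) = 0$ for $b \geq 0$ implies $b = 0$; both are part of the hypotheses. No continuity or density argument on $\Psi$ is needed because we only evaluate $\Psi$ on the single element $\pi(a^*a)$.
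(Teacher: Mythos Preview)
Your proof is correct and follows exactly the same approach as the paper's: apply the intertwining relation to $a^*a$, use injectivity of $\pi|_{\Phi(A)}$ to deduce $\Phi(a^*a)=0$, then invoke faithfulness of $\Phi$. If anything, you spell out one step (that $\pi(a^*a)=\pi(a)^*\pi(a)=0$) that the paper leaves implicit.
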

\begin{proof}
The ``only if" implication is obvious. For the ``if'' implication, suppose that $\pi(a) =
0$. Then $\pi(\Phi(a^*a)) = \Psi(\pi(a^*a)) = 0$. Since $\pi$ is injective on the range
of $\Phi$ and $\Phi$ is faithful, we deduce that $a = 0$.
\end{proof}

The next result is a version of Coburn's theorem for $\Tt C^*(\Lambda,c)$.

\begin{thm}
Let $\Lambda$ be a finitely aligned $k$-graph and let $c \in \Zcat2(\Lambda, \TT)$. Let
$t$ be a Toeplitz-Cuntz-Krieger $(\Lambda, c)$-family. Then the induced homomorphism
$\pi_t$ of $\Tt C^*(\Lambda, c)$ is injective if and only if every $t_v \not= 0$ and
$\Delta(t)^E \not= 0$ for every $E \in \FE(\Lambda)$.
\end{thm}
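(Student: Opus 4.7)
For the ``only if'' direction, I would invoke Proposition~\ref{prp:nonzero elements} with $\Ee = \emptyset$: since $\emptyset$ is trivially satiated, $\overline{\Ee} = \emptyset$ and $C^*(\Lambda, c; \emptyset) = \Tt C^*(\Lambda, c)$, and one reads off that $s^c_\Tt(v) \neq 0$ for every $v \in \Lambda^0$ (because $v \notin \emptyset$) and $\Delta(s^c_\Tt)^E \neq 0$ for every $E \in \FE(\Lambda)$ (because such $E$ contains no vertex and lies outside $\overline{\Ee} = \emptyset$). Injectivity of $\pi_t$ then gives $t_v = \pi_t(s^c_\Tt(v)) \neq 0$ and $\Delta(t)^E = \pi_t(\Delta(s^c_\Tt)^E) \neq 0$.

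For the ``if'' direction, suppose the stated non-vanishing hypotheses hold. Applying Theorem~\ref{thm:core injectivity} with the satiated set $\Ee = \emptyset$ immediately yields injectivity of $\pi_t$ restricted to the fixed-point subalgebra $\Tt C^*(\Lambda, c)^{\gamma^c} = \Phi^c_\emptyset(\Tt C^*(\Lambda, c))$. My plan is to upgrade this to full injectivity via Lemma~\ref{lem:usual argument}, applied with $A = \Tt C^*(\Lambda, c)$, $\pi = \pi_t$, $B = C^*(t)$, and $\Phi = \Phi^c_\emptyset$; this reduces the matter to constructing a linear map $\Psi : C^*(t) \to C^*(t)$ satisfying $\Psi \circ \pi_t = \pi_t \circ \Phi^c_\emptyset$.

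To produce such a $\Psi$, I would use the standard averaging device: the family $\{t_\lambda \otimes \iota_{d(\lambda)}\}_{\lambda \in \Lambda}$, where $\iota_n \in C(\TT^k)$ is defined by $\iota_n(z) = z^n$, is another Toeplitz-Cuntz-Krieger $(\Lambda, c)$-family, now in $C^*(t) \otimes C(\TT^k) \cong C(\TT^k, C^*(t))$. Universality produces a $*$-homomorphism $\pi_u : \Tt C^*(\Lambda, c) \to C(\TT^k, C^*(t))$ satisfying $\pi_u(a)(z) = \pi_t(\gamma^c_z(a))$, and composing with integration $E(f) := \int_{\TT^k} f(z)\,dz$ gives $E \circ \pi_u = \pi_t \circ \Phi^c_\emptyset$.

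The main obstacle is then to descend $E \circ \pi_u$ through $\pi_t$ to a well-defined map on $C^*(t)$, or equivalently to verify that $\ker \pi_t \subseteq \ker(\pi_t \circ \Phi^c_\emptyset)$. I would approach this by first applying Lemma~\ref{lem:usual argument} directly to $\pi_u$ (using the natural translation-averaging $\Psi_u$ on $C(\TT^k, C^*(t))$), which deduces injectivity of $\pi_u$ from the fixed-point injectivity already established. Then, exploiting the $\gamma^c$-equivariance of $\pi_u$ together with a Fourier/Cesaro argument on the spectral subspaces $A_n := \clsp\{s^c_\Tt(\mu)s^c_\Tt(\nu)^* : d(\mu) - d(\nu) = n\}$ — noting that $a^*a \in A_0$ for $a \in A_n$ forces injectivity of $\pi_t$ on each $A_n$ individually — I would recover injectivity of the evaluation-at-$1$ map $\pi_t = \mathrm{ev}_1 \circ \pi_u$ and thereby close the proof.
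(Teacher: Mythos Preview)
Your ``only if'' direction is fine. The gap is in the ``if'' direction, precisely at the point you flag as the obstacle: descending $E \circ \pi_u$ through $\pi_t$ to a well-defined $\Psi$ on $C^*(t)$, or equivalently showing $\ker\pi_t \subseteq \ker(\pi_t \circ \Phi^c_\emptyset)$. Your proposed workaround does not close this. Injectivity of $\pi_u$ tells you only that $\bigcap_z \gamma^c_{z^{-1}}(\ker\pi_t) = 0$, which says nothing about $\ker\pi_t$ itself unless you already know it is gauge-invariant --- and that is exactly what is at issue. Likewise, injectivity of $\pi_t$ on each spectral subspace $A_n$ is easy (via $a^*a \in A_0$), but it does not imply injectivity on the whole algebra: for $\pi_t(a) = 0$ you would need $\pi_t(\Phi_n(a)) = 0$ for each $n$, and the Fourier components $\Phi_n(a) = \int \bar z^n \gamma^c_z(a)\,dz$ are defined by integrating against the gauge action, which you cannot push through $\pi_t$ without the very map $\Psi$ you are trying to build. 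In effect you are attempting to deduce the Coburn-type theorem from the gauge-invariant uniqueness machinery, but the absence of a gauge action on $C^*(t)$ is a genuine extra difficulty that your argument does not overcome.

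The paper sidesteps this by using a \emph{different} expectation: not $\Phi^c_\emptyset$ onto the fixed-point algebra, but the composite $\Phi = \Phi^c_D \circ \Phi^{\gamma^c}$ onto the commutative diagonal $\clsp\{s^c_\Tt(\mu)s^c_\Tt(\mu)^* : \mu \in \Lambda\}$. The point is that a compatible norm-decreasing map $\Psi : C^*(t) \to \clsp\{t_\lambda t^*_\lambda\}$ can be constructed \emph{directly} on $C^*(t)$ by the operator-theoretic arguments of \cite[Proposition~8.9 and Lemmas~8.5--8.8]{RaeburnSims:JOT05}, which require only the nonvanishing hypotheses on the $t_v$ and the $\Delta(t)^E$, not a gauge action. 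One then applies Lemma~\ref{lem:usual argument} with this $\Phi$ and $\Psi$. The passage through the diagonal, rather than the full core, is what makes the construction of $\Psi$ possible without a gauge action on the target.
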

\begin{proof}
Averaging over the gauge action on $\Tt C^*(\Lambda, c)$ defines a faithful conditional
expectation $\Phi^{\gamma^c} : \Tt C^*(\Lambda, c) \to \Tt C^*(\Lambda, c)^{\gamma^c}$
that is characterised by $\Phi^{\gamma^c}(s_\Tt^c(\mu) s_\Tt^c(\nu)^*) = \delta_{d(\mu),
d(\nu)} s_\Tt^c(\mu) s_\Tt^c(\nu)^*$ for $\mu,\nu \in \Lambda$. Lemma~\ref{lem:mx units}
shows that $\Tt C^*(\Lambda, c)^{\gamma^c}$ is AF, and $\clsp\{s_\Tt^c(\mu)
s_\Tt^c(\mu)^* : \mu \in \Lambda\}$ is its canonical diagonal subalgebra. So there is a
faithful conditional expectation $\Phi^c_D : \Tt C^*(\Lambda, c)^{\gamma^c} \to
\clsp\{s_\Tt^c(\mu) s_\Tt^c(\mu)^* : \mu \in \Lambda\}$ satisfying $\Phi^c_D(s_\Tt^c(\mu)
s_\Tt^c(\nu)^*) = \delta_{\mu,\nu} s_\Tt^c(\mu) s_\Tt^c(\mu)^*$. So $\Phi := \Phi^c_D
\circ \Phi^{\gamma^c}$ is a faithful conditional expectation satisfying
$\Phi(s_\Tt^c(\mu) s_\Tt^c(\nu)^*) = \delta_{\mu,\nu} s_\Tt^c(\mu) s_\Tt^c(\mu)^*$ for
all $\mu,\nu \in \Lambda$.

The argument of \cite[Proposition~8.9]{RaeburnSims:JOT05} (this uses Lemmas 8.5--8.8 of
the same paper; all the arguments go through for twisted TCK families) gives a
norm-decreasing linear map $\Psi$ from $C^*(t)$ to $\clsp\{t_\lambda t^*_\lambda :
\lambda \in \Lambda\}$ such that $\Psi \circ \pi_t = \pi_t \circ \Phi$. Now
Lemma~\ref{lem:usual argument} proves the result.
\end{proof}

We also obtain the following version of an Huef and Raeburn's gauge-invariant uniqueness
theorem \cite{anHuefRaeburn:ETDS97}.

\begin{thm}\label{thm:giut}
Let $\Lambda$ be a finitely aligned $k$-graph, let $c \in \Zcat2(\Lambda, \TT)$, and let
$\Ee$ be a satiated subset of $\FE(\Lambda)$. Suppose that $t$ is a relative
Cuntz-Krieger $(\Lambda, c; \Ee)$-family in a $C^*$-algebra $B$ and that there is a
strongly continuous action $\beta$ of $\TT^k$ on $B$ such that $\beta_z(t_\lambda) =
z^{d(\lambda)} t_\lambda$ for all $\lambda \in \Lambda$. Then the induced homomorphism
$\pi^\Ee_t : C^*(\Lambda, c; \Ee) \to C^*(t)$ is injective if and only if every $t_v$ is
nonzero, and $\Delta(t)^F \not= 0$ for all $F \in \FE(\Lambda) \setminus \Ee$.
\end{thm}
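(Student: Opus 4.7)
The plan is to follow the standard Cuntz-style argument, combining the gauge action with Theorem~\ref{thm:core injectivity}, which already gives injectivity on the fixed-point subalgebra under exactly the stated hypotheses.

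\textbf{Necessity.} First I would dispose of the ``only if'' direction: if $\pi^\Ee_t$ is injective, then since $\Ee$ is satiated we have $\overline\Ee = \Ee$, so by Proposition~\ref{prp:nonzero elements} each $s^c_\Ee(v)$ is nonzero and $\Delta(s_\Ee^c)^F \ne 0$ for every $F \in \FE(\Lambda) \setminus \Ee$. Applying $\pi^\Ee_t$ gives $t_v \ne 0$ and $\Delta(t)^F \ne 0$.

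\textbf{Sufficiency.} For the ``if'' direction, the idea is to reduce injectivity of $\pi^\Ee_t$ on all of $C^*(\Lambda, c; \Ee)$ to injectivity on the fixed-point algebra $C^*(\Lambda, c; \Ee)^{\gamma^c}$, and then invoke Theorem~\ref{thm:core injectivity}. The conditional expectation $\Phi^c_\Ee$ obtained by averaging over $\gamma^c$ is already faithful. Since $\beta_z(t_\lambda) = z^{d(\lambda)} t_\lambda \in C^*(t)$ for every $\lambda$, the action $\beta$ restricts to a strongly continuous action of $\TT^k$ on $C^*(t)$, and averaging produces a contractive linear map
\[
\Psi : C^*(t) \to C^*(t)^{\beta},\qquad \Psi(b) = \int_{\TT^k} \beta_z(b)\,dz.
\]
A direct check on the spanning elements $s_\Ee^c(\mu) s_\Ee^c(\nu)^*$ shows $\Psi \circ \pi^\Ee_t = \pi^\Ee_t \circ \Phi^c_\Ee$, since both sides vanish when $d(\mu) \ne d(\nu)$ and agree otherwise. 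Lemma~\ref{lem:usual argument} then reduces injectivity of $\pi^\Ee_t$ to injectivity of its restriction to $C^*(\Lambda, c; \Ee)^{\gamma^c}$.

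\textbf{Applying the core injectivity result.} The hypotheses on $t$ are exactly those of Theorem~\ref{thm:core injectivity}, so that theorem delivers injectivity of $\pi^\Ee_t$ on $C^*(\Lambda, c; \Ee)^{\gamma^c}$, completing the argument.

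\textbf{Main obstacle.} There is essentially no serious obstacle here: the heavy lifting has already been done in Theorem~\ref{thm:core injectivity} and Proposition~\ref{prp:nonzero elements}. The only point requiring a moment's care is verifying that $\beta$ descends to an action on the (not a priori $\beta$-invariant) subalgebra $C^*(t)$, and that the averaged map $\Psi$ intertwines correctly with $\Phi^c_\Ee$; both of these are routine verifications on spanning monomials $t_\mu t_\nu^*$ once one notes that $\beta_z(t_\mu t_\nu^*) = z^{d(\mu)-d(\nu)} t_\mu t_\nu^*$.
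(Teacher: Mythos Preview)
Your proposal is correct and follows essentially the same approach as the paper: average over $\gamma^c$ and over $\beta$ to obtain intertwining expectations, apply Lemma~\ref{lem:usual argument} to reduce to the core, and then invoke Theorem~\ref{thm:core injectivity}. Your treatment is slightly more explicit (you separate out the ``only if'' direction via Proposition~\ref{prp:nonzero elements} and verify the intertwining on spanning elements), but the argument is the same.
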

\begin{proof}
Lemma~\ref{lem:usual argument} applied to the expectation $\Phi$ obtained from averaging
over $\gamma^c$ and the expectation $\Psi$ obtained by averaging over $\beta$ shows that
$\pi_t^\Ee$ is injective if and only if it restricts to an injection on
$C^*(\Lambda,c;\Ee)^{\gamma^c}$. So the result follows from Theorem~\ref{thm:core
injectivity}.
\end{proof}

We also obtain a version of the Cuntz-Krieger uniqueness theorem (cf.
\cite[Theorem~6.3]{Sims:IUMJ06}). Given a filter $S$ and a path $\mu$ with $s(\mu) \in
S$, we write $\ell_\mu(S)$ for the filter $\{\nu \in \Lambda : \mu S \cap \nu\Lambda
\not= \emptyset\}$. We say that a filter $S \subseteq \Lambda$ is \emph{separating} if
whenever $s(\mu) = s(\nu) \in S$ and there is a filter $T$ such that $\ell_\mu(S) \cup
\ell_\nu(S) \subseteq T$, we have $\mu = \nu$.

\begin{thm}\label{thm:CKUT}
Let $\Lambda$ be a finitely aligned $k$-graph, let $c \in \Zcat2(\Lambda, \TT)$, and let
$\Ee$ be a satiated subset of $\FE(\Lambda)$. Suppose that for every $v \in \Lambda^0$
there is a separating $\Ee$-compatible filter $S$ such that $r(S) = v$, and that for
every $F \in \FE(\Lambda) \setminus \Ee$ there is a separating $\Ee$-compatible filter
$S$ such that $r(S) = r(F)$ and $S \cap F\Lambda = \emptyset$. Suppose that $t$ is a
relative Cuntz-Krieger $(\Lambda, c; \Ee)$-family in a $C^*$-algebra $B$. Then the
induced homomorphism $\pi^\Ee_t : C^*(\Lambda, c; \Ee) \to C^*(t)$ is injective if and
only if every $t_v$ is nonzero, and $\Delta(t)^F \not= 0$ for all $F \in \FE(\Lambda)
\setminus \Ee$.
\end{thm}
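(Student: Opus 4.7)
The ``only if'' direction is immediate from Proposition~\ref{prp:nonzero elements}: each $s^c_\Ee(v)$, and each $\Delta(s^c_\Ee)^F$ with $F \in \FE(\Lambda) \setminus \Ee$, is nonzero in $C^*(\Lambda, c; \Ee)$, so any injective $\pi^\Ee_t$ must send them to nonzero elements.

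For the ``if'' direction, the plan is to apply Lemma~\ref{lem:usual argument} with $\Phi := \Phi^c_\Ee$, the canonical faithful conditional expectation of $C^*(\Lambda, c; \Ee)$ onto its core $C^*(\Lambda, c; \Ee)^{\gamma^c}$. Theorem~\ref{thm:core injectivity} gives injectivity of $\pi^\Ee_t$ on the core directly from the nonvanishing hypotheses, so the argument reduces to constructing a bounded linear map $\Psi : C^*(t) \to C^*(t)$ satisfying $\Psi \circ \pi^\Ee_t = \pi^\Ee_t \circ \Phi^c_\Ee$. This is precisely the step where the separating-filter hypothesis has to substitute for the gauge action on $B$ that was available in Theorem~\ref{thm:giut}.

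To construct $\Psi$, I would adapt the Cuntz-style argument from the proof of \cite[Theorem~6.3]{Sims:IUMJ06}, using the bijection of Remark~\ref{rmk:filters vs paths} so that separating $\Ee$-compatible filters assume the role that aperiodic $\Ee$-compatible boundary paths play there. Given a finite linear combination $a = \sum_{\mu,\nu \in F} a_{\mu,\nu} s^c_\Ee(\mu) s^c_\Ee(\nu)^*$ with $F = \Pi F$, I would use~\eqref{eq:tmutnu=thetasum} and Lemma~\ref{lem:mx units} to express $a$ in the matrix-unit basis $\{\Theta(s^c_\Ee)^F_{\mu,\nu}\}$, and then, for each off-diagonal pair $\mu \neq \nu$ with $s(\mu) = s(\nu) = v$, invoke the separating hypothesis on a filter $S$ with $r(S) = v$: the fact that $\ell_\mu(S)$ and $\ell_\nu(S)$ cannot lie in a common filter produces a diagonal projection $p^c_\Ee(\eta)$ conjugating $\Theta(s^c_\Ee)^F_{\mu,\nu}$ to zero while preserving a suitable diagonal matrix unit. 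Iterating over the finitely many off-diagonal indices in $F$ and extending by continuity then yields $\Psi$. The second clause of the hypothesis (separating filters avoiding $F\Lambda$ for $F \in \FE(\Lambda) \setminus \Ee$) feeds into the same mechanism at the level of the generators $\Delta(s^c_\Ee)^F$.

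The main obstacle will be the cocycle phases $c(\mu,\alpha)\overline{c(\nu,\alpha)}$ appearing in~\eqref{eq:tmutnu=thetasum}, which complicate a literal transcription of the untwisted argument. However, these phases are unimodular, and all the relevant compressions involve only products of projections of the form $t_\lambda t_\lambda^*$, whose behaviour is independent of $c$ by the first part of Lemma~\ref{lem:TCK consequences}. The separating-filter hypothesis and the construction of the isolating projections are therefore unchanged from the untwisted case, so the remaining norm estimates carry over intact, and Lemma~\ref{lem:usual argument} delivers the conclusion.
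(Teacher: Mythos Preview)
Your proposal is correct and follows essentially the same route as the paper: both reduce to the untwisted argument of \cite[Theorem~6.3]{Sims:IUMJ06} via the filter/boundary-path correspondence, and both observe that the cocycle does not disrupt the key norm estimate. The paper is slightly more precise about where the cocycle actually surfaces: it is not quite true that the compressions involve only range projections---when the isolating projection $P_2$ compresses an off-diagonal $\Theta(t)^{\Pi E}_{\lambda,\mu}$, a phase $c(\lambda, x(0,N))\overline{c(\mu, x(0,N))}$ appears---but the compressed elements still form a system of matrix units, so the norm computation is unaffected, exactly as you conclude.
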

\begin{proof}
Using Remark~\ref{rmk:filters vs paths}, we see that our hypothesis about the existence
of separating $\Ee$-compatible filters is equivalent to Condition~(C) of
\cite{Sims:IUMJ06}. Now one follows the proof of \cite[Theorem~6.3]{Sims:IUMJ06}; the
only place where the cocycle $c$ comes up is in the displayed calculation at the top of
\cite[page~866]{Sims:IUMJ06}, where the formula for $P_2 \Theta(t)^{\Pi E}_{\lambda, \mu}
P_2$ picks up a factor of $c(\lambda, x(0,N))\overline{c(\mu, x(0,N))}$; but the
resulting elements still form a family of matrix units, so the rest of the argument
proceeds without change.
\end{proof}

The hypothesis of Theorem \ref{thm:CKUT} simplifies significantly in the key case where
$\Ee = \FE(\Lambda)$. Recall from \cite[Definition~3.3]{LewinSims:MPCPS10} that a
finitely aligned $k$-graph $\Lambda$ is \emph{cofinal} if, for all $v,w \in \Lambda^0$
there exists a finite exhaustive subset $E$ of $v\Lambda$ (here $E$ may contain $v$) such
that $w\Lambda s(\alpha) \not= \emptyset$ for every $\alpha \in E$. Recall from
\cite{LewinSims:MPCPS10} that a $k$-graph $\Lambda$ is \emph{aperiodic} if whenever
$\mu,\nu$ are distinct paths with the same source, there exists $\tau \in s(\mu)\Lambda$
such that $\MCE(\mu\tau, \nu\tau) = \emptyset$.

\begin{cor}\label{cor:CKUT}
Let $\Lambda$ be a finitely aligned $k$-graph and let $c \in \Zcat2(\Lambda, \TT)$.
Suppose $\Lambda$ is aperiodic. Then a homomorphism $\pi : C^*(\Lambda, c) \to B$ is
injective if and only if every $\pi(s^c_v) \not = 0$. If $\Lambda$ is cofinal then
$C^*(\Lambda, c)$ is simple.
\end{cor}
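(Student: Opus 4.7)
The plan is to derive both assertions from Theorem~\ref{thm:CKUT} taken with $\Ee = \FE(\Lambda)$, together with a vertex-to-vertex propagation argument powered by cofinality and the normalisation of $c$.

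For the uniqueness half, observe that $\FE(\Lambda) \setminus \FE(\Lambda) = \emptyset$, so the second clause of the hypothesis of Theorem~\ref{thm:CKUT} is vacuous. What remains is to verify, for each $v \in \Lambda^0$, the existence of a separating $\FE(\Lambda)$-compatible filter $S$ with $r(S) = v$. Via the bijection of Remark~\ref{rmk:filters vs paths}, this is exactly the requirement that there is a separating boundary path starting at $v$, which is Condition~(C) of \cite{Sims:IUMJ06}, a standard consequence of aperiodicity established in \cite{LewinSims:MPCPS10}. Theorem~\ref{thm:CKUT} then delivers the stated characterisation of injectivity.

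For simplicity, assume additionally that $\Lambda$ is cofinal, let $I$ be a proper ideal of $C^*(\Lambda, c)$, and consider the quotient map $\pi$. Since $\pi$ is not injective, the uniqueness part just proved forces $\pi(s^c_v) = 0$ for some $v \in \Lambda^0$. I would then show that this vanishing propagates to every vertex. Fix $w \in \Lambda^0$ and apply cofinality to the pair $(w,v)$ to obtain a finite exhaustive $E \subseteq w\Lambda$ with $v\Lambda s(\alpha) \neq \emptyset$ for every $\alpha \in E$. For each such $\alpha$, pick $\beta_\alpha \in v\Lambda s(\alpha)$. By normalisation of $c$, (TCK2) gives $s^c(\beta_\alpha) = s^c_v\, s^c(\beta_\alpha)$, so $\pi(s^c(\beta_\alpha)) = 0$, and (TCK3) then yields $\pi(s^c_{s(\alpha)}) = 0$, whence $\pi(s^c(\alpha) s^c(\alpha)^*) = 0$. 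If $w \notin E$ then $E \in \FE(\Lambda)$ and the relation $\prod_{\alpha \in E}(s^c_w - s^c(\alpha) s^c(\alpha)^*) = 0$ collapses under $\pi$ to the projection identity $\pi(s^c_w) = 0$; if $w \in E$ instead, take $\alpha = w$ and the same step applied with $\beta_w \in v\Lambda w$ gives $\pi(s^c_w) = 0$ directly. Thus $\pi$ kills every vertex projection, hence every generator $s^c(\lambda) = s^c(\lambda) s^c_{s(\lambda)}$, so $\pi = 0$ and $I = C^*(\Lambda, c)$.

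The main obstacle is conceptual rather than technical: it lies in the reduction from aperiodicity to the separating-filter condition powering Theorem~\ref{thm:CKUT}. I expect no new difficulty to arise from the presence of the twist, because $c$ enters the Cuntz-Krieger relation only through unimodular scalars that are neutralised by the normalisation condition $c(r(\lambda),\lambda) = 1 = c(\lambda, s(\lambda))$, so all the key identities used in the propagation step transfer verbatim from the untwisted setting.
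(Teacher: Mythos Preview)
Your proof is correct and follows essentially the same route as the paper: both reduce the uniqueness assertion to Theorem~\ref{thm:CKUT} with $\Ee = \FE(\Lambda)$ via the aperiodicity-implies-separating-filters result of \cite{LewinSims:MPCPS10}, and both handle simplicity by a cofinality-driven propagation showing that one vertex projection in an ideal forces all vertex projections into it (the paper merely cites the argument of \cite[Theorem~5.1]{LewinSims:MPCPS10} for this step, whereas you spell it out). One cosmetic slip: you should begin the simplicity argument with a \emph{nonzero} ideal $I$ rather than a proper one, since $I=0$ is proper but its quotient map is injective.
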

\begin{proof}
Condition~(A) of \cite{FarthingMuhlyEtAl:SF05}, reinterpreted using
Remark~\ref{rmk:filters vs paths}, requires that for every $v \in \Lambda^0$ there is a
separating $\FE(\Lambda)$-compatible filter $S$ containing $v$. So the implication
(i)$\implies$(iii) of \cite[Proposition~3.6]{LewinSims:MPCPS10} implies that the
hypotheses of Theorem~\ref{thm:CKUT} are satisfied with $\Ee = \FE(\Lambda)$. This proves
the first assertion.

Now suppose that $\Lambda$ is cofinal. Then the argument of (ii)$\implies$(iii) of
\cite[Theorem~5.1]{LewinSims:MPCPS10} carries over unchanged to the twisted setting to
show that if $I$ is an ideal of $C^*(\Lambda, \Ee)$ and $s^c_v \in I$ for some $v$, then
$s^c_w \in I$ for every $w \in \Lambda^0$ and hence $I = C^*(\Lambda, c)$. So
$C^*(\Lambda, c)$ is simple by the preceding paragraph.
\end{proof}

Recall from \cite{EvansSims:JFA2012} that if $\Lambda$ is a finitely aligned $k$-graph,
then a \emph{generalised cycle} in $\Lambda$ is a pair $(\mu,\nu) \in \Lambda$ such that
$r(\mu) = r(\nu)$, $s(\mu) = s(\nu)$ and $\MCE(\mu\tau, \nu) \not= \emptyset$ for all
$\tau \in s(\mu)\Lambda$. We say that a generalised cycle $(\mu,\nu)$ has an
\emph{entrance} if there exists $\tau \in s(\nu) \Lambda$ such that $\MCE(\mu,\nu\tau) =
\emptyset$. If $c \in \Zcat2(\Lambda, \TT)$ and $t$ is a Toeplitz-Cuntz-Krieger
$(\Lambda, c)$-family such that $t_{s(\tau)} \not= 0$, then $V := t_\mu t^*_\nu$
satisfies $q_\mu = VV^* \sim V^*V = q_\nu > q_\nu-q_{\nu\tau} \geq q_\mu$, where $\sim$
denotes Murray-von Neumann equivalence. Hence $q_\mu$ is infinite.

\begin{prop}
Let $\Lambda$ be a finitely aligned $k$-graph with no sources, and let $c \in
\Zcat2(\Lambda, \TT)$. Suppose that $\Lambda$ is aperiodic and that for every $v \in
\Lambda^0$ there is a generalised cycle $(\mu,\nu)$ with an entrance such that $v \Lambda
r(\mu) \not= \emptyset$. Then every hereditary subalgebra of $C^*(\Lambda, c)$ contains
an infinite projection. If $\Lambda$ is cofinal, then $C^*(\Lambda, c)$ is simple and
purely infinite.
\end{prop}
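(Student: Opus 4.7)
The plan is the standard two-step approach to pure infiniteness for graph-type $C^*$-algebras, as in \cite[Proposition~8.8]{KumjianPaskEtAl:TAMSxx} in the row-finite setting. Simplicity of $C^*(\Lambda, c)$ under the additional cofinality hypothesis is immediate from Corollary~\ref{cor:CKUT}. Combined with the main assertion that every nonzero hereditary subalgebra of $C^*(\Lambda, c)$ contains an infinite projection, Cuntz's characterisation of purely infinite simple $C^*$-algebras then delivers the ``simple and purely infinite'' conclusion. So the heart of the proof lies in the first assertion.

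To establish the first assertion, fix a nonzero hereditary subalgebra $A \subseteq C^*(\Lambda, c)$ and a positive contraction $a \in A$ with $\|a\| = 1$. The target is to produce a vertex $v \in \Lambda^0$ with $p^c(v)$ Cuntz-subequivalent to $a$; once this is done, a standard functional-calculus argument yields a projection $p \in \overline{a\, C^*(\Lambda, c)\, a} \subseteq A$ that is Murray--von Neumann equivalent to $p^c(v)$, and it then suffices to show that $p^c(v)$ is infinite.

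To produce the subequivalence, I would use the faithful conditional expectation $\Phi^c$ onto the core together with the finite-dimensional approximations $M(s^c)^E$ supplied by Lemma~\ref{lem:mx units} (with $E = \Pi E$ chosen large enough) to approximate $\Phi^c(a)$ in norm by an element of some $M(s^c)^E$ possessing a dominant matrix unit $\Theta(s^c)^E_{\mu_0,\mu_0}$. Aperiodicity of $\Lambda$ produces a separating $\FE(\Lambda)$-compatible filter through $s(\mu_0)$ via the implication (i)$\Rightarrow$(iii) of \cite[Proposition~3.6]{LewinSims:MPCPS10}, and this, in turn, yields a path $\tau \in s(\mu_0)\Lambda$ such that compression by $s^c(\mu_0\tau)$ kills all off-diagonal matrix-unit components of $\Phi^c(a)$ while also annihilating the remainder $a - \Phi^c(a)$. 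The compressed element is then close in norm to a positive scalar multiple of $p^c(v)$ for $v = s(\tau)$, giving $p^c(v) \precsim a$. The cocycle $c$ enters these manipulations only through phase factors from (TCK2), and \eqref{eq:tmutnu=thetasum} shows that these phases appear as unit-modulus scalars on the relevant matrix units, so the argument is structurally identical to the untwisted one.

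Finally, the generalised cycle hypothesis supplies a generalised cycle $(\alpha,\beta)$ with an entrance and with $v\Lambda r(\alpha) \neq \emptyset$. Choosing $\lambda \in v\Lambda r(\alpha)$, the pair $(\lambda\alpha, \lambda\beta)$ is again a generalised cycle with entrance, and $r(\lambda\alpha) = v$. The argument recalled immediately before the statement of the proposition, combined with nonvanishing of the relevant vertex projection from Proposition~\ref{prp:nonzero elements}, then shows that $p^c(\lambda\alpha)$ is infinite. Since $p^c(\lambda\alpha) \le p^c(v)$, the projection $p^c(v)$ is infinite, and hence so is $p \in A$. The main obstacle is the aperiodicity-based compression step: one must check that the cocycle phases from (TCK2) cancel cleanly through the matrix-unit manipulations that isolate $\tau$; granted this, the rest of the argument is routine.
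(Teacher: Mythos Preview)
Your proposal is correct and follows essentially the same approach as the paper. The paper's proof is even more terse: it first shows each $s^c(v)$ is infinite via the chain $s^c(v) \ge s^c(\lambda) s^c(\lambda)^* \sim s^c(\lambda)^* s^c(\lambda) \ge p^c(\mu)$ (using the argument preceding the proposition to see $p^c(\mu)$ is infinite), and then simply invokes \cite[Proposition~8.8 and Lemma~8.12]{Sims:CJM06} for the compression argument you have unpacked; your sketch of the aperiodicity-based compression and the handling of the cocycle phases via \eqref{eq:tmutnu=thetasum} accurately describes what that citation entails.
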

\begin{proof}
Let $v \in \Lambda^0$ and choose a generalised cycle $(\mu,\nu)$ with an entrance such
that $v\Lambda r(\mu) \not= \emptyset$; say $\lambda \in v\Lambda r(\mu)$. Then $s^c(v)
\ge s^c(\lambda) s^c(\lambda)^* \sim s^c(\lambda)^* s^c(\lambda) \ge p^c(\mu)$ is
infinite. Using this in place of \cite[Lemma~8.13]{Sims:CJM06}, we can now follow the
proof of \cite[Proposition~8.8]{Sims:CJM06} (including the proof of
\cite[Lemma~8.12]{Sims:CJM06}).
\end{proof}

\section{Gauge-invariant ideals}\label{sec:ideals}

In this section we list the gauge-invariant ideals of each $C^*(\Lambda, c; \Ee)$. There
is by now a fairly standard program for this (the basic idea goes back to
\cite{Cuntz:IM81} and \cite{anHuefRaeburn:ETDS97}). The standard arguments are applied to
the untwisted Cuntz-Krieger algebras of finitely aligned $k$-graphs in \cite{Sims:CJM06},
and we follow the broad strokes of that treatment. The twist does not affect the
arguments much, but we need to make a few adjustments to pass from $C^*(\Lambda, c)$ to
$C^*(\Lambda, c; \Ee)$. So we give more detail here than in the preceding section.

\begin{dfn}
Let $\Lambda$ be a finitely aligned $k$-graph and let $\Ee$ be a satiated subset of
$\FE(\Lambda)$. We say that $H \subseteq \Lambda^0$ is \emph{hereditary} if $s(H\Lambda)
\subseteq H$, and that it is \emph{$\Ee$-saturated} if whenever $E \in \Ee$ and $s(E)
\subseteq H$ we have $r(E) \in H$.
\end{dfn}

Recall from \cite[Lemma~4.1]{Sims:CJM06} that if $\Lambda$ is a finitely aligned
$k$-graph and $H \subseteq \Lambda^0$ is hereditary, then $\Lambda \setminus \Lambda H$
is a finitely aligned $k$-graph under the operations and degree map inherited from
$\Lambda$.

\begin{lem}
Let $\Lambda$ be a finitely aligned $k$-graph and let $\Ee$ be a satiated subset of
$\FE(\Lambda)$. Suppose that $H \subseteq \Lambda^0$ is hereditary and $\Ee$-saturated.
Then
\[
	\Ee_H := \{E \setminus EH : E \in \Ee, r(E) \not\in H\}	
\]
is a subset of $\FE(\Lambda \setminus \Lambda H)$.
\end{lem}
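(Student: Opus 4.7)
The plan is to verify the three requirements for membership in $\FE(\Lambda \setminus \Lambda H)$: finiteness, disjointness from vertices, and exhaustiveness. Finiteness of $E \setminus EH = \{\mu \in E : s(\mu) \notin H\}$ is inherited from $E$, and since $E \in \FE(\Lambda)$ implies $E \cap \Lambda^0 = \emptyset$, we also have $(E \setminus EH) \cap (\Lambda \setminus \Lambda H)^0 = \emptyset$. Containment $E \setminus EH \subseteq \Lambda \setminus \Lambda H$ is built into the definition. So everything reduces to showing that $E \setminus EH$ is exhaustive at $r(E)$ in $\Lambda \setminus \Lambda H$.

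Fix $\lambda \in r(E)(\Lambda \setminus \Lambda H)$, so $s(\lambda) \notin H$. I would split into two cases. In the easy case $\lambda \in E\Lambda$, write $\lambda = \mu'\sigma$ with $\mu' \in E$. Since $H$ is hereditary and $s(\sigma) = s(\lambda) \notin H$, the vertex $s(\mu') = r(\sigma)$ cannot lie in $H$ either; hence $\mu' \in E \setminus EH$ and $\lambda \in \MCE(\lambda, \mu')$, viewed inside $\Lambda \setminus \Lambda H$.

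The main case, and the only nontrivial step, is $\lambda \in r(E)\Lambda \setminus E\Lambda$. Here I would invoke satiation: by (S2), $\Ext(\lambda; E) \in \Ee$, and this set has range $s(\lambda) \notin H$. Because $H$ is $\Ee$-saturated, its sources cannot all lie in $H$, so there exists $\alpha \in \Ext(\lambda; E)$ with $s(\alpha) \notin H$. By definition of $\Ext$, this $\alpha$ comes from some $\mu \in E$ via $\lambda\alpha = \mu\beta \in \MCE(\lambda,\mu)$. Then $s(\beta) = s(\alpha) \notin H$, and hereditarity of $H$ forces $r(\beta) = s(\mu) \notin H$, so $\mu \in E \setminus EH$. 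Since $s(\lambda\alpha) = s(\alpha) \notin H$, the element $\lambda\alpha$ lies in $\MCE(\lambda, \mu) \cap (\Lambda \setminus \Lambda H)$, establishing exhaustiveness.

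The only real obstacle is recognising that in the second case one must feed $\Ee$-saturation a set in $\Ee$ whose range equals $s(\lambda)$; the satiation property (S2), applied to $E$ and $\lambda$, produces exactly such a set $\Ext(\lambda; E)$, and the rest is bookkeeping with hereditarity.
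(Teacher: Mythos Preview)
Your proof is correct and follows essentially the same approach as the paper's: the same split into the cases $\lambda \in E\Lambda$ and $\lambda \notin E\Lambda$, the same appeal to (S2) in the second case to obtain $\Ext(\lambda;E) \in \Ee$ with range $s(\lambda)$, and the same use of $\Ee$-saturation and hereditarity to extract a suitable $\mu \in E \setminus EH$. The only cosmetic difference is that the paper remarks separately that $E \setminus EH$ is nonempty (by $\Ee$-saturation), whereas in your argument this falls out of the exhaustiveness proof.
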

\begin{proof}
No element $F$ of $\Ee_H$ contains $r(F)$ because no element $E$ of $\Ee$ contains
$r(E)$. The elements of $\Ee_H$ are nonempty because $H$ is $\Ee$-saturated. Fix $F \in
\Ee_H$, say $r(F)=v$, and fix $\lambda \in v\Lambda \setminus \Lambda H$. Choose $E \in
\Ee$ such that $F = E \setminus EH$. We have to show that there exists $\mu \in F$ such
that $\MCE(\mu,\lambda) \cap (\Lambda \setminus \Lambda H) \neq \emptyset$. We consider
two cases. First suppose that $\lambda \in E\Lambda$, say $\lambda = \mu\mu'$ with $\mu
\in E$. Since $H$ is hereditary and $s(\lambda) = s(\mu') \not\in H$, we have $s(\mu) =
r(\mu') \not\in H$, so $\mu \in F$ satisfies $\lambda \in \MCE(\mu,\lambda) \cap (\Lambda
\setminus \Lambda H)$. Now suppose that $\lambda \not\in E\Lambda$. Then $\Ext(\lambda;
E) \in \Ee$ by~(S2). Since $H$ is $\Ee$-saturated and $s(\lambda) \not \in H$, it follows
that $s(\Ext(\lambda; E)) \not\subseteq H$. So there exists $\mu \in E$ and $\mu\alpha =
\lambda\beta \in \MCE(\mu,\lambda)$ with $s(\alpha) \not\in H$. Since $H$ is hereditary,
it follows that $s(\mu) \not\in H$, so $\mu \in F$ and $\MCE(\mu,\lambda)\cap (\Lambda
\setminus \Lambda H) \neq \emptyset$.
\end{proof}

\begin{lem}\label{lem:ideal sets}
Let $\Lambda$ be a finitely aligned $k$-graph, let $c \in \Zcat2(\Lambda, \TT)$, and let
$\Ee$ be a satiated subset of $\FE(\Lambda)$. Let $I$ be an ideal of $C^*(\Lambda, c;
\Ee)$. Then
\begin{enumerate}
\item\label{it:HI}
	$H_I := \{v \in \Lambda^0 : s_\Ee^c(v) \in I\}$ is hereditary and $\Ee$-saturated;
\item\label{it:BI}
	$\Bb_I := \{F \in \FE(\Lambda \setminus \Lambda H_I) : \Delta(s_\Ee^c)^F \in I\}$ is
a satiated subset of $\FE(\Lambda \setminus \Lambda H)$; and
\item\label{it:E<B}
	$\Ee_{H_I} \subseteq \Bb_I$.
\end{enumerate}
\end{lem}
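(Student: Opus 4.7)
The plan is to address each of the three claims in turn, using the relative Cuntz-Krieger relations, Lemma~\ref{lem:Delta commutation}, and the fact that $I$ is a two-sided ideal.

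For~(\ref{it:HI}), to see $H_I$ is hereditary, I would fix $v \in H_I$ and $\lambda \in v\Lambda$, and use (TCK3) together with the consequence $s_\Ee^c(v) s_\Ee^c(\lambda) = s_\Ee^c(\lambda)$ of (TCK2) (using that $c$ is normalised) to compute
\[
s_\Ee^c(s(\lambda)) = s_\Ee^c(\lambda)^* s_\Ee^c(\lambda) = s_\Ee^c(\lambda)^* s_\Ee^c(v) s_\Ee^c(\lambda) \in I,
\]
so $s(\lambda) \in H_I$. For $\Ee$-saturation, suppose $E \in \Ee$ with $s(E) \subseteq H_I$, so each $p_\Ee^c(\lambda) = s_\Ee^c(\lambda) s_\Ee^c(s(\lambda)) s_\Ee^c(\lambda)^*$ lies in $I$ for $\lambda \in E$. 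Passing to $C^*(\Lambda, c;\Ee)/I$, the relative CK relation $\prod_{\lambda \in E}(p_\Ee^c(r(E)) - p_\Ee^c(\lambda)) = 0$ collapses to $s_\Ee^c(r(E)) = 0$, so $s_\Ee^c(r(E)) \in I$ and $r(E) \in H_I$.

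For~(\ref{it:BI}), I would verify (S1)--(S4) separately inside the finitely aligned $k$-graph $\Lambda \setminus \Lambda H_I$ (finitely aligned by \cite[Lemma~4.1]{Sims:CJM06}). (S1) is immediate from $\Delta(s_\Ee^c)^{F\cup\{\lambda\}} = \Delta(s_\Ee^c)^F (p_\Ee^c(r(F)) - p_\Ee^c(\lambda)) \in I$. For (S2), Lemma~\ref{lem:Delta commutation} together with (TCK3) yields
\[
s_\Ee^c(\lambda)^* \Delta(s_\Ee^c)^F s_\Ee^c(\lambda) = \Delta(s_\Ee^c)^{\Ext(\lambda;F)} \in I,
\]
and the $\alpha \in \Ext(\lambda;F)$ with $s(\alpha) \in H_I$ satisfy $p_\Ee^c(\alpha) \in I$ and may be dropped modulo $I$, so the version of $\Ext$ taken in $\Lambda \setminus \Lambda H_I$ also lies in $\Bb_I$. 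For (S3), since $p_\Ee^c(\lambda\lambda') \leq p_\Ee^c(\lambda)$, the factor $(p_\Ee^c(r(F)) - p_\Ee^c(\lambda\lambda'))$ is absorbed by $(p_\Ee^c(r(F)) - p_\Ee^c(\lambda))$, giving $\Delta(s_\Ee^c)^{F \setminus\{\lambda\lambda'\}} = \Delta(s_\Ee^c)^F \in I$. For (S4), writing $F' := F \setminus \{\lambda\}$ and exploiting the orthogonal splitting
\[
p_\Ee^c(r(F)) - p_\Ee^c(\lambda\mu) = (p_\Ee^c(r(F)) - p_\Ee^c(\lambda)) + (p_\Ee^c(\lambda) - p_\Ee^c(\lambda\mu))
\]
together with the identity $s_\Ee^c(\lambda) \Delta(s_\Ee^c)^G s_\Ee^c(\lambda)^* = \prod_{\mu \in G} (p_\Ee^c(\lambda) - p_\Ee^c(\lambda\mu))$, the mixed cross terms vanish by orthogonality and one obtains
\[
\Delta(s_\Ee^c)^{F' \cup \lambda G} = \Delta(s_\Ee^c)^F + \Delta(s_\Ee^c)^{F'} s_\Ee^c(\lambda) \Delta(s_\Ee^c)^G s_\Ee^c(\lambda)^*,
\]
with both summands in $I$.

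Finally, for~(\ref{it:E<B}), fix $E \in \Ee$ with $r(E) \notin H_I$ and decompose $E$ as the disjoint union of $E \setminus EH_I$ and $EH_I$; as in~(\ref{it:HI}), $p_\Ee^c(\lambda) \in I$ whenever $\lambda \in EH_I$. Reducing the relation $\prod_{\lambda \in E}(p_\Ee^c(r(E)) - p_\Ee^c(\lambda)) = 0$ modulo $I$ kills those factors, collapsing the product to $\Delta(s_\Ee^c)^{E \setminus EH_I} \equiv 0 \pmod{I}$, so $\Delta(s_\Ee^c)^{E \setminus EH_I} \in I$. I expect (S4) to be the main obstacle, since it is the only step requiring a genuine orthogonal decomposition of projections and the attendant cross-term bookkeeping; the remaining cases reduce to essentially one-line manipulations of the relative Cuntz-Krieger relations.
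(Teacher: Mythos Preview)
Your proposal is correct and follows essentially the same route as the paper: verify heredity via (TCK3), check (S1)--(S4) individually using Lemma~\ref{lem:Delta commutation} and the orthogonal splitting for (S4), and establish~(\ref{it:E<B}) by reducing the Cuntz--Krieger relation modulo $I$.

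There is one genuine (if minor) difference worth recording. For the $\Ee$-saturation step in part~(\ref{it:HI}), the paper invokes the decomposition
\[
s_\Ee^c(v) = \Delta(s_\Ee^c)^{\vee E} + \sum_{\mu \in \vee E} s_\Ee^c(\mu)\,\Delta(s_\Ee^c)^{T(\vee E;\mu)}\,s_\Ee^c(\mu)^*
\]
from \cite[Proposition~8.6]{RaeburnSims:JOT05}, then uses (S1) to conclude $\Delta(s_\Ee^c)^{\vee E} = 0$ and observes that each summand lies in $I$ because $s(\mu) \in H_I$. Your argument is more direct: since each $p_\Ee^c(\lambda) \in I$, every factor in $\prod_{\lambda \in E}(p_\Ee^c(r(E)) - p_\Ee^c(\lambda)) = 0$ reduces to $s_\Ee^c(r(E))$ modulo $I$, so the product collapses to $s_\Ee^c(r(E))$. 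This is shorter and avoids the appeal to $\vee E$ and (S1). You are also slightly more careful than the paper in (S2), explicitly noting that $\Ext(\lambda;F)$ computed in $\Lambda$ may contain paths with source in $H_I$, which must be discarded to land in $\FE(\Lambda \setminus \Lambda H_I)$; the paper silently identifies the two.
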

\begin{proof}
(\ref{it:HI}). If $r(\lambda) \in H_I$, then $s_\Ee^c(s(\lambda)) = s_\Ee^c(\lambda)^*
s_\Ee^c(r(\lambda)) s_\Ee^c(\lambda) \in I$, and hence $H_I$ is hereditary. To see that
it is $\Ee$-saturated, suppose that $E \in \Ee$ and $s(E) \subseteq H$, and let $v =
r(E)$. The calculation of \cite[Proposition~8.6]{RaeburnSims:JOT05} implies that
$s_\Ee^c(v) = \Delta(s_\Ee^c)^{\vee E} + \sum_{\mu \in \vee E} s_\Ee^c(\mu)
\Delta(s_\Ee^c)^{T(\vee E;\mu)} s_\Ee^c(\mu)^*$. Since $E \in \Ee$, condition~(S1)
implies that $\vee E \in \Ee$, and so $\Delta(s_\Ee^c)^{ } = 0$. Since $H$ is hereditary,
each $s_\Ee^c(\mu) \Delta(s_\Ee^c)^{T(\vee E; \mu)} s_\Ee^c(\mu)^* \le s_\Ee^c(\mu)
s_\Ee^c(s(\mu))s_\Ee^{c}(\mu)^* \in I$, giving $s_\Ee^c(v) \in I$.

(\ref{it:BI}). Write $\Gamma := \Lambda \setminus \Lambda H$, and fix $F \in \Bb_I$, so
that $\Delta(s_\Ee^c)^F \in I$. To see that $\Bb_I$ satisfies~(S1), fix $\lambda \in
r(F)\Gamma \setminus \{r(F)\}$. Then $\Delta(s_\Ee^c)^{F \cup \{\lambda\}} =
\Delta(s_\Ee^c)^F\big(p_\Ee^c(r(F)) - p_\Ee^c(\lambda)\big) \in I$. For~(S2), fix $F \in
\Bb_I$ and $\lambda \in r(F)\Gamma \setminus F\Gamma$. Then Lemma~\ref{lem:Delta
commutation} gives
\[
\Delta(s_\Ee^c)^{\Ext(\lambda;F)}
	= s_\Ee^c(\lambda)^* \Delta(s_\Ee^c)^F s_\Ee^c(\lambda) \in I.
\]
For~(S3), suppose $\lambda, \lambda\lambda' \in F$ with $\lambda' \neq s(\lambda)$. Then
$p_\Ee^c(r(\lambda))-p_\Ee^c(\lambda\lambda') \geq p_\Ee^c(r(F))-p_\Ee^c(\lambda)\geq
\Delta(s_\Ee^c)^{F\setminus \lambda\lambda'}$, so $\Delta(s_\Ee^c)^{F\setminus
\lambda\lambda'}=\Delta(s_\Ee^c)^{F\setminus\lambda\lambda'}(p_\Ee^c(r(F))-p_\Ee^c(\lambda\lambda'))
= \Delta(s_\Ee^c)^{F} \in I$. For~(S4), suppose that $\lambda \in F$ and $G \in
s(\lambda)\Bb_I$. Let $F' := F \setminus \{\lambda\} \cup \lambda G$. For $\alpha \in G$,
both $p_\Ee^c(r(F))$ and $p_\Ee^c(\lambda)$ dominate $(p_\Ee^c(\lambda) -
p_\Ee^c(\lambda\alpha))$, giving
$(p_\Ee^c(r(F))-p_\Ee^c(\lambda))(p_\Ee^c(\lambda)-p_\Ee^c(\lambda\alpha))=0$. Hence
\begin{align*}
\prod_{\alpha \in G}(p_\Ee^c(r(F)) &{}- p_\Ee^c(\lambda\alpha))\\
	&= \prod_{\alpha \in G}\Big(\big(p_\Ee^c(r(F)) - p_\Ee^c(\lambda)\big) + \big(p_\Ee^c(\lambda) - p_\Ee^c(\lambda\alpha)\big)\Big) \\
	&= \sum_{H \subseteq G}\Big(p_\Ee^c(r(F)) - p_\Ee^c(\lambda)\Big)^{|H|} \prod_{\alpha \in G\setminus H} \Big(p_\Ee^c(\lambda) - p_\Ee^c(\lambda\alpha)\Big) \\
	&= (p_\Ee^c(r(F)) - p_\Ee^c(\lambda)) + \prod_{\alpha \in G} (p_\Ee^c(\lambda) - p_\Ee^c(\lambda\alpha)).
\end{align*}
Hence
\[
\Delta(s_\Ee^c)^{F'}
	= \Delta(s_\Ee^c)^{F \setminus\{\lambda\}}(p_\Ee^c(r(F)) - p_\Ee^c(\lambda))
	+ \Delta(s_\Ee^c)^{F \setminus\{\lambda\}} \prod_{\alpha \in G} (p_\Ee^c(\lambda) -
p_\Ee^c(\lambda\alpha)).
\]
We have $\Delta(s_\Ee^c)^{F \setminus\{\lambda\}}(p_\Ee^c(r(F)) - p_\Ee^c(\lambda)) =
\Delta(s_\Ee^c)^F \in I$, and a quick computation using that range projections commute
shows that $\prod_{\alpha \in G} (p_\Ee^c(\lambda) - p_\Ee^c(\lambda\alpha)) =
s_\Ee^c(\lambda) \Delta(s_\Ee^c)^{G} s_\Ee^c(\lambda)^* \in I$. Hence
$\Delta(s_\Ee^c)^{F'} \in I$.

(\ref{it:E<B}). Fix $F \in \Ee_{H_I}$, and choose $E \in \Ee$ with $r(E) \not\in H$ such
that $F = E \setminus EH$. Then $\Delta(s_\Ee^c)^E = 0$. Let $q_I$ be the quotient map $a
\mapsto a + I$. For $\mu \in E H$, we have $q_I(p_\Ee^c(\mu)) = 0$, and so
$q_I(p_\Ee^c(r(E)) - p_\Ee^c(\mu)) = q_I(p_\Ee^c(r(E))$. Hence
\begin{align*}
0 &= q_I\big(\Delta(s_\Ee^c)^E\big)
	= \prod_{\mu \in E} \big(q_I(p_\Ee^c(r(E)) - p_\Ee^c(\mu))\big) \\
	&= \prod_{\mu \in F} \big(q_I(p_\Ee^c(r(E)) - p_\Ee^c(\mu))\big)
		\prod_{\mu \in EH} q_I(p_\Ee^c(r(E))
	= q_I(\Delta(s_\Ee^c)^F).
\end{align*}
So $\Delta(s_\Ee^c)^F \in I$ and hence $F \in \Bb_I$.
\end{proof}

Suppose that $\Lambda$ is a finitely aligned $k$-graph, that $c \in \Zcat2(\Lambda, \TT)$
and that $\Ee \subseteq \FE(\Lambda)$ is satiated. Suppose that $H \subseteq \Lambda^0$
is hereditary and $\Ee$-saturated and that $\Bb \subseteq \FE(\Lambda \setminus \Lambda
H)$ is satiated and contains $\Ee_H$. We write $I^c_{H, \Bb}$ for the ideal of
$C^*(\Lambda, c; \Ee)$ generated by $\{s_\Ee^c(v) : v \in H\} \cup \{\Delta(s_\Ee^c)^E :
E \in \Bb\}$.

Observe that the cocycle $c$ restricts to a cocycle, which we also denote $c$, on the
subgraph $\Lambda \setminus \Lambda H$.

\begin{thm}\label{thm:quotient map}
Suppose that $\Lambda$ is a finitely aligned $k$-graph, that $c \in \Zcat2(\Lambda, \TT)$
and that $\Ee \subseteq \FE(\Lambda)$ is satiated. Suppose that $H \subseteq \Lambda^0$
is hereditary and $\Ee$-saturated and that $\Bb \subseteq \FE(\Lambda \setminus \Lambda
H)$ is satiated and contains $\Ee_H$. There is a homomorphism $\pi_{H, \Bb} :
C^*(\Lambda, c; \Ee) \to C^*(\Lambda \setminus \Lambda H, c; \Bb)$ such that
\begin{equation}\label{eq:quotient map}
\pi_{H, \Bb}(s_\Ee^c(\lambda))
	= \begin{cases}
		s_{\Bb}^c(\lambda) &\text{ if $s(\lambda) \not\in H$}\\
		0 &\text{ if $s(\lambda) \in H$}.
	\end{cases}
\end{equation}
We have $\ker(\pi_{H, \Bb}) = I_{H, \Bb}$, and $H = H_{I_{H, \Bb}}$ and $\Bb = \Bb_{I_{H,
\Bb}}$.
\end{thm}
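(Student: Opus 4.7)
The plan is to proceed in three phases: construct $\pi_{H,\Bb}$ via the universal property, invert the induced map $C^*(\Lambda,c;\Ee)/I_{H,\Bb} \to C^*(\Lambda\setminus\Lambda H,c;\Bb)$ by producing a family in the quotient that satisfies the $(\Lambda\setminus\Lambda H,c;\Bb)$-relations, and then read off the equalities $H = H_{I_{H,\Bb}}$ and $\Bb = \Bb_{I_{H,\Bb}}$ from Proposition~\ref{prp:nonzero elements}.

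First I would define $t_\lambda \in C^*(\Lambda\setminus\Lambda H,c;\Bb)$ by the formula~\eqref{eq:quotient map} and verify that $t$ is a relative Cuntz-Krieger $(\Lambda,c;\Ee)$-family. Relations (TCK1)-(TCK3) follow immediately since $H$ is hereditary (so whenever $s(\lambda)\in H$ or $r(\lambda)\in H$, the relevant products of $t$'s become $0$). For (TCK4), one splits $\MCE_\Lambda(\mu,\nu)$ into paths ending in $H$ (which contribute $0$) and paths in $\Lambda\setminus\Lambda H$, and notes that the latter is precisely $\MCE_{\Lambda\setminus\Lambda H}(\mu,\nu)$. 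The key point is the (CK) relation for $E\in\Ee$ with $r(E)=v$: if $v\in H$ then $t_v=0$; otherwise for $\lambda\in EH$ we have $t_\lambda=0$ so $t_v-t_\lambda t_\lambda^*=t_v$, reducing the product to $\Delta(s_\Bb^c)^{E\setminus EH}$, which vanishes since $E\setminus EH\in\Ee_H\subseteq\Bb$. The universal property then yields $\pi_{H,\Bb}$.

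Next, the inclusion $I_{H,\Bb}\subseteq\ker\pi_{H,\Bb}$ is clear from~\eqref{eq:quotient map}. For the reverse inclusion I would build an inverse to the induced map $\tilde\pi_{H,\Bb}: C^*(\Lambda,c;\Ee)/I_{H,\Bb}\to C^*(\Lambda\setminus\Lambda H,c;\Bb)$. Letting $q$ denote the quotient map, I set $t'_\lambda := q(s_\Ee^c(\lambda))$ for $\lambda\in\Lambda\setminus\Lambda H$ and verify that $t'$ is a relative Cuntz-Krieger $(\Lambda\setminus\Lambda H,c;\Bb)$-family. Relations (TCK1)-(TCK3) pass directly to the quotient; for (TCK4), one uses that any $\lambda\in\MCE_\Lambda(\mu,\nu)$ with $s(\lambda)\in H$ satisfies $s_\Ee^c(\lambda)s_\Ee^c(\lambda)^* = s_\Ee^c(\lambda)s_\Ee^c(s(\lambda))s_\Ee^c(\lambda)^*\in I_{H,\Bb}$; and the (CK) relation for $E\in\Bb$ holds because $\Delta(s_\Ee^c)^E\in I_{H,\Bb}$ by definition of $I_{H,\Bb}$. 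The resulting homomorphism $\rho:C^*(\Lambda\setminus\Lambda H,c;\Bb)\to C^*(\Lambda,c;\Ee)/I_{H,\Bb}$ is inverse to $\tilde\pi_{H,\Bb}$ on generators and hence globally, giving $\ker\pi_{H,\Bb}=I_{H,\Bb}$.

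Finally, for the equalities $H=H_{I_{H,\Bb}}$ and $\Bb=\Bb_{I_{H,\Bb}}$, the inclusions $\subseteq$ are immediate from the definition of $I_{H,\Bb}$. For the reverse inclusions, I invoke Proposition~\ref{prp:nonzero elements} applied to the $k$-graph $\Lambda\setminus\Lambda H$ with the satiated set $\Bb$: if $v\notin H$ then $\pi_{H,\Bb}(s_\Ee^c(v))=s_\Bb^c(v)\neq 0$, so $v\notin H_{I_{H,\Bb}}$; and if $F\in\Bb_{I_{H,\Bb}}$, then (using the already-established equality $H=H_{I_{H,\Bb}}$) $F\in\FE(\Lambda\setminus\Lambda H)$ with $\pi_{H,\Bb}(\Delta(s_\Ee^c)^F)=\Delta(s_\Bb^c)^F=0$, forcing $F\in\overline\Bb=\Bb$. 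The main obstacle is really contained in the second phase: one must carefully track how minimal common extensions behave under the passage to $\Lambda\setminus\Lambda H$ and ensure that the (TCK4) sums restrict cleanly, and this is where the interplay between the hereditary/saturated condition on $H$ and the satiated condition on $\Bb$ (together with $\Ee_H\subseteq\Bb$) is doing its work.
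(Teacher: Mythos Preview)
Your proposal is correct and follows essentially the same approach as the paper's proof: both construct $\pi_{H,\Bb}$ by checking the $(\Lambda,c;\Ee)$-relations for the candidate family in $C^*(\Lambda\setminus\Lambda H,c;\Bb)$, establish $\ker\pi_{H,\Bb}=I_{H,\Bb}$ by building an inverse to the induced map on the quotient via the universal property of $C^*(\Lambda\setminus\Lambda H,c;\Bb)$, and deduce $H=H_{I_{H,\Bb}}$ and $\Bb=\Bb_{I_{H,\Bb}}$ from Proposition~\ref{prp:nonzero elements} applied to the smaller $k$-graph. The only cosmetic difference is that the paper phrases the final step contrapositively, showing $E\in\FE(\Lambda\setminus\Lambda H)\setminus\Bb$ implies $E\notin\Bb_{I_{H,\Bb}}$, whereas you argue directly.
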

\begin{proof}
Routine calculations show that the formula for $\pi_{H, \Bb}$ defines a
Toeplitz-Cuntz-Krieger $(\Lambda, c)$-family in $C^*(\Lambda \setminus \Lambda H, c;
\Bb)$. That $\Ee_H \subseteq \Bb$ ensures that this family also satisfies~(CK). Hence the
universal property of $C^*(\Lambda, c; \Ee)$ gives a homomorphism $\pi_{H, \Bb}$
satisfying \eqref{eq:quotient map}, which is surjective because its image contains the
generators of $C^*(\Lambda \setminus \Lambda H, c; \Bb)$.

To see that $\ker(\pi_{H, \Bb}) = I_{H, \Bb}$, observe that the generators of $I_{H,
\Bb}$ belong to $\ker(\pi_{H, \Bb})$, and so $\pi_{H, \Bb}$ descends to a homomorphism
$\tilde\pi : C^*(\Lambda, c; \Ee)/I_{H, \Bb} \to C^*(\Lambda \setminus \Lambda H, c;
\Bb)$. It suffices to show that $\tilde\pi$ is injective, and we do this by constructing
an inverse for $\tilde\pi$. Define elements $\{t_\lambda : \lambda \in \Lambda \setminus
\Lambda H\}$ in $C^*(\Lambda, c; \Ee)/I_{H, \Bb}$ by $t_\lambda = s_\Ee^c(\lambda) +
I_{H, \Bb}$. Since the relations (TCK1)--(TCK3) for $\Lambda \setminus \Lambda H$
families hold in any Toeplitz-Cuntz-Krieger $\Lambda$ family, $t$ satisfies
(TCK1)--(TCK3). For $\lambda,\mu \in \Lambda \setminus \Lambda H$, it is straightforward
to check that $\MCE_{\Lambda \setminus \Lambda H}(\lambda,\mu) =
\MCE_\Lambda(\lambda,\mu) \setminus \Lambda H$. If $\eta \in \MCE_\Lambda(\lambda,\mu)
\cap \Lambda H$, then $q_\eta$ is the zero element of $C^*(\Lambda, c; \Ee)/I_{H, \Bb}$,
and it follows that the $t_\lambda$ satisfy~(TCK4). They satisfy~(CK) because $E \in \Bb$
implies $\Delta(s_{\Ee}^c)^E \in I_{H, B}$ so that $\Delta(t)^E = 0$. Now the universal
property of $C^*(\Lambda \setminus \Lambda H, c; \Bb)$ provides a homomorphism $\pi_t :
C^*(\Lambda \setminus \Lambda H, c; \Bb) \to C^*(\Lambda, c; \Ee)/I_{H, \Bb}$. The map
$\pi_t$ is an inverse for $\tilde\pi$ on generators, and so $\tilde\pi$ is injective.

For the last assertion, observe that we have $H \subseteq H_{I_{H,\Bb}}$ and $\Bb
\subseteq \Bb_{I_{H, \Bb}}$ by definition. For the reverse inclusions, observe that if $v
\not\in H$ then Proposition~\ref{prp:nonzero elements} applied to $C^*(\Lambda \setminus
\Lambda H, c; \Bb)$ implies that $\pi_{H, \Bb}(s_\Ee^c(v)) \not= 0$, so that $v \not\in
H_{I_{H,\Bb}}$. Similarly if $E \in \FE(\Lambda \setminus \Lambda H) \setminus \Bb$, then
Proposition~\ref{prp:nonzero elements} implies that $\pi_{H, \Bb}(\Delta(s_\Ee^c)^E)
\not= 0$ and hence $E \not\in \Bb_{I_{H, \Bb}}$.
\end{proof}

\begin{cor}\label{cor:quotient iso}
With the hypotheses of Theorem~\ref{thm:quotient map}, the homomorphism $\pi_{H, \Bb}$
descends to an isomorphism $\phi_{H, \Bb} : C^*(\Lambda, c; \Ee)/I_{H, \Bb} \to
C^*(\Lambda \setminus \Lambda H, c; \Bb)$ such that
\[
\phi_{H, \Bb}(s_\Ee^c(\lambda) + I_{H, \Bb}) = s_{\Bb}^c(\lambda) \text{ for all $\lambda
\in \Lambda\setminus\Lambda H$.}
\]
\end{cor}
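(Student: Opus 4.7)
The plan is to invoke the first isomorphism theorem for $C^*$-algebras directly; Corollary~\ref{cor:quotient iso} is a purely formal consequence of Theorem~\ref{thm:quotient map}, whose proof already does all the nontrivial work. Specifically, Theorem~\ref{thm:quotient map} asserts that $\pi_{H, \Bb} : C^*(\Lambda, c; \Ee) \to C^*(\Lambda \setminus \Lambda H, c; \Bb)$ is a homomorphism, that it is surjective (its image contains the generators $s^c_\Bb(\lambda)$), and that $\ker(\pi_{H, \Bb}) = I_{H, \Bb}$. These three facts together imply by the first isomorphism theorem that the rule $a + I_{H, \Bb} \mapsto \pi_{H, \Bb}(a)$ defines an isomorphism $\phi_{H, \Bb} : C^*(\Lambda, c; \Ee)/I_{H, \Bb} \to C^*(\Lambda \setminus \Lambda H, c; \Bb)$.

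For the formula on generators, I would note that since $\Lambda H = \{\mu \in \Lambda : s(\mu) \in H\}$, any $\lambda \in \Lambda \setminus \Lambda H$ satisfies $s(\lambda) \notin H$. The first line of \eqref{eq:quotient map} then gives $\pi_{H, \Bb}(s^c_\Ee(\lambda)) = s^c_\Bb(\lambda)$, from which $\phi_{H, \Bb}(s^c_\Ee(\lambda) + I_{H, \Bb}) = s^c_\Bb(\lambda)$ is immediate by the definition of $\phi_{H, \Bb}$.

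There is no genuine obstacle to surmount here; the identification of $\ker(\pi_{H, \Bb})$ with $I_{H, \Bb}$ was the substantive point and was established within the proof of Theorem~\ref{thm:quotient map} by constructing, via the universal property of $C^*(\Lambda \setminus \Lambda H, c; \Bb)$, an explicit inverse $\pi_t$ to the map $\tilde{\pi}$ on the quotient. That inverse $\pi_t$ is in fact already $\phi_{H, \Bb}^{-1}$, so one could alternatively present the corollary as a simple rebranding of the isomorphism $\tilde{\pi}$ constructed mid-proof of Theorem~\ref{thm:quotient map}.
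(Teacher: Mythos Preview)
Your proposal is correct and matches the paper's approach exactly: the paper states the corollary with no proof, treating it as an immediate consequence of Theorem~\ref{thm:quotient map} via the first isomorphism theorem, and the map $\phi_{H,\Bb}$ is indeed just the isomorphism $\tilde\pi$ already exhibited in that proof.
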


We are now ready to give a listing of the gauge-invariant ideals of $C^*(\Lambda, c;
\Ee)$. Let $\Lambda$ be a finitely aligned $k$-graph and $\Ee$ a satiated subset of
$\FE(\Lambda)$. We define $\SHS{\Ee}$ to be the collection of all pairs $(H, \Bb)$ such
that $H \subseteq\Lambda^0$ is hereditary and $\Ee$-saturated, and $\Bb \subseteq
\FE(\Lambda \setminus \Lambda H)$ is satiated and satisfies $\Ee_H \subseteq \Bb$.

\begin{thm}\label{thm:g-i ideals}
Let $\Lambda$ be a finitely aligned $k$-graph, $\Ee$ a satiated subset of $\FE(\Lambda)$,
and $c \in \Zcat2(\Lambda, \TT)$. The map  $(H, \Bb) \mapsto I_{H, \Bb}$ is a bijection
from $\SHS{\Ee}$ to the collection of gauge-invariant ideals of $C^*(\Lambda, c; \Ee)$.
We have $I_{H_1, \Bb_1} \subseteq I_{H_2, \Bb_2}$ if and only if both of the following
hold: $H_1 \subseteq H_2$; and whenever $E \in \Bb_1$ and $r(E) \not\in H_2$, we have $E
\setminus E H_2 \in \Bb_2$.
\end{thm}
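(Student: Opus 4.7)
The plan is to show that the map $(H,\Bb)\mapsto I_{H,\Bb}$ and the map $I \mapsto (H_I, \Bb_I)$ from Lemma~\ref{lem:ideal sets} are mutually inverse bijections between $\SHS{\Ee}$ and the gauge-invariant ideals of $C^*(\Lambda,c;\Ee)$. The key tools will be Theorem~\ref{thm:quotient map} (which realises each quotient by an $I_{H,\Bb}$ as a relative Cuntz-Krieger algebra of a subgraph), Corollary~\ref{cor:quotient iso}, Proposition~\ref{prp:nonzero elements}, and the gauge-invariant uniqueness theorem (Theorem~\ref{thm:giut}).

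Injectivity of $(H,\Bb)\mapsto I_{H,\Bb}$ is immediate from the last assertion of Theorem~\ref{thm:quotient map}: if $I_{H_1,\Bb_1}=I_{H_2,\Bb_2}$, then $H_1 = H_{I_{H_1,\Bb_1}} = H_{I_{H_2,\Bb_2}}=H_2$ and similarly $\Bb_1=\Bb_2$. For surjectivity, fix a gauge-invariant ideal $I$, and let $H=H_I$, $\Bb=\Bb_I$; Lemma~\ref{lem:ideal sets} gives $(H,\Bb)\in\SHS\Ee$. The containment $I_{H,\Bb}\subseteq I$ is clear from the definitions of $H_I$ and $\Bb_I$, so the quotient map $C^*(\Lambda,c;\Ee)\to C^*(\Lambda,c;\Ee)/I$ factors through a surjection $\rho : C^*(\Lambda,c;\Ee)/I_{H,\Bb}\to C^*(\Lambda,c;\Ee)/I$; I will identify the domain with $C^*(\Lambda\setminus\Lambda H, c; \Bb)$ via $\phi_{H,\Bb}$ from Corollary~\ref{cor:quotient iso}. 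Because $I$ is gauge-invariant, the quotient algebra carries a gauge action that intertwines $\rho$ with the gauge action on $C^*(\Lambda\setminus\Lambda H, c; \Bb)$. By the definitions of $H_I$ and $\Bb_I$, the images under $\rho$ of $s^c_\Bb(v)$ (for $v\in\Lambda^0\setminus H$) and of $\Delta(s^c_\Bb)^F$ (for $F\in\FE(\Lambda\setminus\Lambda H)\setminus\Bb$) are all nonzero. Theorem~\ref{thm:giut} then forces $\rho$ to be injective, whence $I=I_{H,\Bb}$.

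For the containment characterisation, suppose first that $I_{H_1,\Bb_1}\subseteq I_{H_2,\Bb_2}$. Then $v\in H_1$ gives $s^c_\Ee(v)\in I_{H_2,\Bb_2}$, so $H_1\subseteq H_2$. If $E\in\Bb_1$ with $r(E)\notin H_2$, then $\Delta(s^c_\Ee)^E\in I_{H_2,\Bb_2}$; applying $\phi_{H_2,\Bb_2}$ and using the computation at the end of the proof of Lemma~\ref{lem:ideal sets}(\ref{it:E<B}) (which remains valid with $H_I$ replaced by any hereditary $\Ee$-saturated set) shows that $\phi_{H_2,\Bb_2}$ sends $\Delta(s^c_\Ee)^E + I_{H_2,\Bb_2}$ to $\Delta(s^c_{\Bb_2})^{E\setminus EH_2}$, which must therefore be zero. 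Proposition~\ref{prp:nonzero elements} applied in $C^*(\Lambda\setminus\Lambda H_2, c;\Bb_2)$ yields $E\setminus EH_2\in\Bb_2$ (using that $\Bb_2$ is already satiated). Conversely, if $H_1\subseteq H_2$ and the $\Bb$-condition holds, then each generator of $I_{H_1,\Bb_1}$ lies in $I_{H_2,\Bb_2}$: vertex generators are immediate; for $E\in\Bb_1$ with $r(E)\in H_2$ we have $\Delta(s^c_\Ee)^E\le s^c_\Ee(r(E))\in I_{H_2,\Bb_2}$; and for $E\in\Bb_1$ with $r(E)\notin H_2$, the same computation identifies the image of $\Delta(s^c_\Ee)^E$ under $\phi_{H_2,\Bb_2}$ with $\Delta(s^c_{\Bb_2})^{E\setminus EH_2}=0$, giving $\Delta(s^c_\Ee)^E\in I_{H_2,\Bb_2}$.

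The bulk of the labour has already been done in Lemma~\ref{lem:ideal sets} and Theorem~\ref{thm:quotient map}; the only step with real content is the surjectivity argument, which reduces matters to an application of the gauge-invariant uniqueness theorem. I expect the finickiest bookkeeping to be in translating between the generators in $C^*(\Lambda,c;\Ee)$ and their images in $C^*(\Lambda\setminus\Lambda H, c;\Bb)$ under $\phi_{H,\Bb}$ for the containment characterisation, but this is essentially the content of the identity $q_I(\Delta(s^c_\Ee)^E) = q_I(\Delta(s^c_\Ee)^{E\setminus EH})$ extracted from the proof of Lemma~\ref{lem:ideal sets}(\ref{it:E<B}).
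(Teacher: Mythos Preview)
Your proof is correct and follows essentially the same approach as the paper: injectivity via the last assertion of Theorem~\ref{thm:quotient map}, surjectivity by passing to $C^*(\Lambda\setminus\Lambda H_I,c;\Bb_I)$ via Corollary~\ref{cor:quotient iso} and applying the gauge-invariant uniqueness theorem. For the containment characterisation the paper simply cites \cite[Theorem~6.2]{Sims:CJM06}, whereas you supply a direct argument using $\phi_{H_2,\Bb_2}$ and Proposition~\ref{prp:nonzero elements}; this is a reasonable and correct way to fill in that citation.
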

\begin{proof}
The final statement of Theorem~\ref{thm:quotient map} implies that $(H, \Bb) \mapsto
I_{H, \Bb}$ is injective. To see that it is surjective, fix a gauge-invariant ideal $I$
of $C^*(\Lambda, c; \Ee)$. We must show that $I = I_{H_I, \Bb_I}$. We clearly have
$I_{H_I, \Bb_I} \subseteq I$, and so the quotient map $q_I : C^*(\Lambda, c; \Ee) \to
C^*(\Lambda, c; \Ee)/I$ determines a homomorphism $\tilde{q}_I :  C^*(\Lambda, c;
\Ee)/I_{H_I, \Bb_I} \to C^*(\Lambda, c; \Ee)/I$. We must show that $\tilde{q}_I$ is
injective. Let $\phi_{H_I, \Bb_I} : C^*(\Lambda, c; \Ee)/I_{H_I, \Bb_I} \to C^*(\Lambda
\setminus \Lambda H_I, c; \Bb_I)$ be the isomorphism of Corollary~\ref{cor:quotient iso},
and let $\theta := \tilde{q}_I \circ \phi_{H_I, \Bb_I}$. Since $I$ is gauge-invariant,
the gauge action on $C^*(\Lambda, c; \Ee)$ descends to an action $\beta$ on $C^*(\Lambda,
c; \Ee)/I$ such that $\beta_z \circ \theta = \theta \circ \gamma_z$ for all $z$. For each
$v \in (\Lambda \setminus \Lambda H_I)^0$ we have $s_\Ee^c(v) \not\in I$ by definition of
$H_I$, and so each $\theta(s_{\Bb_I}^c(v)) \not= 0$. Similarly, if $F \in \FE(\Lambda
\setminus \Lambda H) \setminus \Bb_I$, then $\Delta(s_\Ee^c)^F \not\in I$ by definition
of $\Bb_I$, and so $\theta(\Delta(s_\Ee^c)^F) \not= 0$. So the gauge-invariant uniqueness
theorem (Theorem~\ref{thm:giut}) implies that $\theta$ is injective. Hence $\tilde{q}_I$
is injective.

The proof of the final statement is identical to that of \cite[Theorem~6.2]{Sims:CJM06}.
\end{proof}

We are now ready to prove the $\supseteq$ containment from Proposition~\ref{prp:->0 on
filters} as promised in Section 3.

\begin{proof}[Proof of $\supseteq$ in Proposition~\ref{prp:->0 on filters}]
Let
\[
J := \{a \in \Tt C^*(\Lambda, c) : \lim_{\lambda \in S} \pi_T(a) h_\lambda =
    0\text{ for every $\overline{\Ee}$-compatible filter $S$}\}.
\]
This $J$ is clearly a linear subspace of $\Tt C^*(\Lambda)$, and an
$\varepsilon/3$-argument shows that it is norm-closed. If $S$ is an
$\overline{\Ee}$-compatible filter and $s(\lambda) = r(S)$, then $\lambda S$ is a cofinal
subset of the $\overline{\Ee}$-compatible filter $\ell_\lambda(S)$. So if $a \in J$, then
$\lim_{\mu \in S} \pi_T(a s_\lambda) h_\mu = \lim_{\nu \in \lambda S} \pi_T(a)h_{\nu} =
0$, giving $a T_\lambda \in J$. Similarly, since $\ell^*_\lambda(S)$ is an
$\Ee$-compatible filter whenever $S$ is, for $a \in J$ and $\lambda \in \Lambda$ we have
$\lim_{\mu \in S} \pi_T(a s^*_\lambda) h_\mu = \lim_{\nu \in \ell^*_\lambda(S)}
\pi_T(a)h_{\nu} = 0$, so that $a s^*_\lambda \in J$. Clearly $T_\lambda a$ and
$T^*_\lambda a$ belong to $J$. So $J$ is an ideal of $\Tt C^*(\Lambda)$.
Proposition~\ref{prp:->0 on filters} implies that $\Delta(s^c_\Tt)^E \in J$ for all $E
\in \Ee$. Lemma~\ref{lem:nonzero under piT} implies that $s^c_\Tt(v) \not\in \pi_T(J)$
for all $v \in \Lambda^0$ and that $\Delta(s^c_\Tt)^E \not\in J$ for $E \in \FE(\Lambda)
\setminus \Ee$. So $H_J = \emptyset = H_{J_\Ee}$, and $\Bb_J = \Ee = \Bb_{J_\Ee}$. So the
result will follow from Theorem~\ref{thm:g-i ideals} once we establish that $J$ is
gauge-invariant.

Fix $a \in J$ and $z \in \TT^k$; we must show that $\gamma_z(a) \in J$. There is a
unitary $U_z$ in $\Bb(\ell^2(\Lambda))$ given by $U_z(h_\lambda) = z^{d(\lambda)}
h_\lambda$. We have $U_z T_\lambda U^*_z = z^{d(\lambda)} T_\lambda$ for all $z,
\lambda$, and so $\Ad U_z \circ \pi_T = \pi_T \circ \gamma_z$. Fix an
$\overline{\Ee}$-compatible filter $S$. For $\lambda \in S$, we have
\[
\|\pi(\gamma_z(a)) h_\lambda\|
    = \|U_z \pi(a) U^*_z h_\lambda\|
    \le \|U_z\| \|\pi(a) \overline{z}^{d(\lambda)}h_\lambda\|
    = \|\pi(a) h_\lambda\|.
\]
So $\lim_{\lambda \in S} \|\pi(\gamma_z(a)) h_\lambda\| \le \lim_{\lambda \in S} \|\pi(a)
h_\lambda\| = 0$, giving $\gamma_z(a) \in J$.
\end{proof}

\section{Nuclearity and the Universal Coefficient Theorem}\label{sec:nuclear}

We show that each $C^*(\Lambda, c; \Ee)$ is nuclear and satisfies the Universal
Coefficient Theorem. Given a set $X$ we write $\Kk_X$ for the $C^*$-algebra generated by
nonzero matrix units $\{\theta_{x,y} : x,y \in X\}$.

\begin{lem}\label{lem:skew-prod AF}
Let $\Lambda$ be a finitely aligned $k$-graph and let $\Ee$ be a satiated subset of
$\FE(\Lambda)$. Suppose that $b : \Lambda^0 \to \ZZ^k$ satisfies $d(\lambda) =
b(s(\lambda)) - b(r(\lambda))$ for all $\lambda \in \Lambda$. Let $c \in \Zcat2(\Lambda,
\TT)$ and let $t$ be a relative Cuntz-Krieger $(\Lambda, c; \Ee)$-family with each
$t_v\neq 0$.
\begin{enumerate}
\item\label{it:K}
	For $n \in \NN^k$ there is an isomorphism $\clsp\{t_\mu t^*_\nu : b(s(\mu)) =
b(s(\nu)) = n\} \cong \bigoplus_{b(v) = n} \Kk_{\Lambda v}$ which carries each $t_\mu
t^*_\nu$ to $\theta_{\mu,\nu}$.
\item\label{it:absorbtion}
	If $s(\mu) = s(\nu)$ and $s(\eta) = s(\zeta)$, then $t_\mu t^*_\nu t_\eta t^*_\zeta
\in \clsp\{t_\alpha t^*_\beta : b(s(\alpha)) = b(s(\beta)) = b(s(\mu)) \vee b(s(\nu))\}$.
\item\label{it:AF}
	If $N \subseteq \NN^k$ is finite and $m,n \in N$ implies $m \vee n \in N$, then
$B(t)_N := \clsp\{t_\mu t^*_\nu : b(s(\mu)) = b(s(\nu)) \in N\}$ is an AF algebra.
\end{enumerate}
\end{lem}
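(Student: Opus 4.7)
The plan is to realise $B(t)_N$ as the closure of an upward-directed union of finite-dimensional $*$-subalgebras, one for each finite subset of $\Lambda$ satisfying a suitable closure condition.

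Parts (\ref{it:K}) and (\ref{it:absorbtion}) together show that $B(t)_N$ is a $*$-subalgebra of $C^*(t)$: it is clearly self-adjoint, and (\ref{it:absorbtion}) shows that the product of two spanning generators, say one with source index $n \in N$ and the other with source index $m \in N$, lies in the analogous span at index $n \vee m$, which is in $N$ because $N$ is $\vee$-closed. For a finite $F \subseteq \Lambda$, put
\[
A_F := \lsp\{t_\mu t^*_\nu : \mu,\nu \in F,\ s(\mu) = s(\nu),\ b(s(\mu)) \in N\}.
\]
By part (\ref{it:K}), $A_F$ is the finite direct sum $\bigoplus_{v : b(v) \in N} \lsp\{t_\mu t^*_\nu : \mu,\nu \in F \cap \Lambda v\}$ of matrix algebras, so it is finite-dimensional and $*$-invariant. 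The multiplication formula from Lemma~\ref{lem:TCK consequences} then shows that $A_F$ is closed under multiplication provided $F$ is \emph{closed} in the sense that whenever $\mu,\nu,\eta,\zeta \in F$ satisfy $s(\mu) = s(\nu)$, $s(\eta) = s(\zeta)$, $b(s(\mu)),b(s(\eta)) \in N$, and $\nu\alpha = \eta\beta \in \MCE(\nu,\eta)$, both $\mu\alpha$ and $\zeta\beta$ lie in $F$.

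The main step is to show that every finite $F_0 \subseteq \Lambda$ is contained in a finite closed set. Build an increasing chain $F_0 \subseteq F_1 \subseteq F_2 \subseteq \cdots$ by adjoining at each stage the finitely many extensions demanded by the closure condition on the current set; finite alignment of $\Lambda$ keeps each stage finite. The crucial observation, drawn from the computation underlying (\ref{it:absorbtion}), is that any genuinely new extension $\mu\alpha$ satisfies $d(\alpha) = (b(s(\eta)) - b(s(\mu)))_+ \ne 0$, so $b(s(\mu\alpha)) = b(s(\mu)) \vee b(s(\eta))$ strictly dominates $b(s(\mu))$ in at least one coordinate. Iterating, the $b \circ s$ values along any chain of non-trivial extensions form a strictly increasing sequence in the finite poset $N$, whose length is bounded by that of the longest strictly increasing chain in $N$. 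Hence the construction terminates in finitely many steps at a finite closed set $F^* \supseteq F_0$.

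Any two finite closed sets are both contained in the finite closure of their union, so $\{A_F : F \text{ finite closed}\}$ is upward-directed under inclusion. Every generator of the spanning set of $B(t)_N$ lies in some such $A_F$, giving $B(t)_N = \overline{\bigcup_F A_F}$ as an AF algebra. The main obstacle is finiteness of the closure, resolved by the strict monotonicity of $b \circ s$ along non-trivial extensions together with the finiteness of $N$.
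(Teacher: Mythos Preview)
Your approach is genuinely different from the paper's, and in principle it can be made to work, but the termination argument has a gap.

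The paper proves part~(\ref{it:AF}) by induction on $|N|$: pick a minimal $n \in N$, set $M = N \setminus \{n\}$ (still $\vee$-closed), observe from part~(\ref{it:absorbtion}) that $B(t)_M$ is an ideal of $B(t)_N$ whose quotient is a quotient of $B(t)_{\{n\}}$, and invoke the fact that extensions of AF algebras by AF algebras are AF. This avoids any closure construction entirely.

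Your approach instead exhibits $B(t)_N$ as a direct limit of finite-dimensional $*$-subalgebras $A_F$ indexed by ``closed'' finite sets $F$. The closure you need is exactly the set $\tilde E$ the paper introduces (for other purposes) in Section~\ref{sec:K-theory}, and its finiteness is indeed true---the paper cites \cite[Lemma~8.2]{Sims:CJM06} for it. But your argument for finiteness does not establish it. You correctly observe that any genuinely new extension $\mu\alpha$ has $b(s(\mu\alpha)) > b(s(\mu))$, so chains of successive extensions have length bounded by the height of $N$. However, this does not bound the number of iterations $F_0 \subsetneq F_1 \subsetneq \cdots$. At stage $j+1$ a new element can arise from a tuple $(\mu,\nu,\eta,\zeta)$ in which $\mu,\nu \in F_0$ while only $\eta \in G_j := F_j \setminus F_{j-1}$; if $b(s(\mu)) \le b(s(\eta))$ then $d(\beta) = 0$ and $\nu\alpha = \eta$, so the new element $\mu\alpha$ satisfies $b(s(\mu\alpha)) = b(s(\eta))$---the same level as the stage-$j$ element, not strictly larger. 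Thus the backward chain $G_j \ni \xi_j, G_{j-1} \ni \xi_{j-1}, \ldots$ obtained by tracing ``which element of the tuple was new'' gives only $b(s(\xi_j)) \ge b(s(\xi_{j-1})) \ge \cdots$, which can stabilise without forcing termination.

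So either supply a genuine proof that $\tilde F_0$ is finite (this is where the real work lies in your approach; it requires a more delicate induction than bounded chain length), or adopt the paper's extension argument, which sidesteps the issue.
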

\begin{proof}
\eqref{it:K}. We just have to check that the $t_\mu t^*_\nu$ are matrix units. So suppose
that $s(\mu) = s(\nu) = v$ and $s(\eta) = s(\zeta) = w$ and $b(v) = b(w) = n$. Then
\[
t_\mu t^*_\nu t_\eta t^*_\zeta
	=  \sum_{\nu\alpha = \eta\zeta \in \MCE(\nu,\eta)}
		c(\mu,\alpha)\overline{c(\nu,\alpha)} c(\zeta,\beta)\overline{c(\eta,\beta)}
		t_{\mu\alpha} t^*_{\zeta\beta}.
\]
If $r(\nu) \not= r(\eta)$, then $\MCE(\nu,\eta) = \emptyset$ and so $t_\mu t^*_\nu t_\eta
t^*_\zeta = 0$. If $r(\nu) = r(\eta)$, then we have $d(\nu) = b(s(\nu)) - b(r(\nu)) = n -
b(r(\nu))$ and similarly $d(\eta) = n - b(r(\eta))$ so that $d(\nu) = d(\eta)$, giving
$t_\mu t^*_\nu t_\eta t^*_\zeta = \delta_{\nu,\eta}t_\mu t^*_\nu t_\nu t^*_\zeta =
\delta_{\nu,\eta}t_\mu t^*_\zeta$.

\eqref{it:absorbtion}. Suppose that $b(s(\nu)) = m$ and $b(s(\eta)) = n$, and that
$\lambda \in \MCE(\nu,\eta)$. Since $\MCE(\nu,\eta) \not= \emptyset$, we have $r(\nu) =
r(\eta)$, and in particular $b(r(\nu)) = b(r(\eta)) = p$, say. Thus $m - d(\nu) = p = n -
d(\eta)$. We have $d(\lambda) = d(\mu) \vee d(\nu) = (m \vee n) - p$ and so
$b(s(\lambda)) = b(r(\lambda)) + (d(\mu) \vee d(\nu)) = m \vee n$. So the result follows
from Lemma~\ref{lem:TCK consequences}.

\eqref{it:AF}. The proof is by induction on $|N|$. For $|N| = 1$, the result follows from
part~(\ref{it:K}). Fix $N$ with $|N| \ge 2$, and suppose as an inductive hypothesis that
$B(t)_{M}$ is AF whenever $|M| < |N|$. Pick a minimal $n \in N$ and observe that $M := N
\setminus \{n\}$ is closed under $\vee$, so that $B_M$ is AF. Part~\ref{it:absorbtion}
implies that $B_M$ is an ideal of $B_N$ and $B_N/B_M$ is a quotient of $B_{\{n\}}$ and
therefore is AF. Since extensions of AF algebras by AF algebras are AF, the result
follows.
\end{proof}

Recall that if $\Lambda$ is a finitely aligned $k$-graph, then $\Lambda \times_d \ZZ^k$
is the skew-product $k$-graph which is equal as a set to $\Lambda \times \ZZ^k$ and has
structure maps $r(\lambda,n) = (r(\lambda), n)$, $s(\lambda, n) = (s(\lambda), n +
d(\lambda))$, $(\lambda, n)(\mu, n+d(\lambda)) = (\lambda\mu, n)$, and $d(\lambda, n) =
d(\lambda)$. For $c \in \Zcat2(\Lambda, \TT)$, the map $c \times 1 : \big((\lambda,
n),(\mu,n+d(\lambda))\big) \mapsto c(\lambda,\mu)$ belongs to $\Zcat2(\Lambda \times_d
\ZZ^k, \TT)$.

\begin{cor}\label{cor:nuclear}
Let $\Lambda$ be a finitely aligned $k$-graph, $\Ee$ a satiated subset of $\FE(\Lambda)$,
and $c \in \Zcat2(\Lambda, \TT)$. Then $C^*(\Lambda, c; \Ee)$ is Morita equivalent to the
crossed-product of an AF algebra by $\ZZ^k$. In particular, it belongs to the bootstrap
class $\mathcal{N}$ of \cite{RosenbergSchochet:DMJ87}, and so is nuclear and satisfies
the UCT.
\end{cor}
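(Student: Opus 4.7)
The plan is to mimic the classical skew-product strategy of \cite[\S 8]{Sims:CJM06}, which reduces the study of $C^*(\Lambda, c; \Ee)$ to that of an AF algebra carrying a natural $\ZZ^k$-action.

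First, I would form the skew-product $k$-graph $\Lambda \times_d \ZZ^k$ together with the pulled-back cocycle $c \times 1 \in \Zcat2(\Lambda \times_d \ZZ^k, \TT)$ described just before the corollary. I would then lift the satiated set $\Ee$ to $\widetilde\Ee := \{E \times \{n\} : E \in \Ee, n \in \ZZ^k\} \subseteq \FE(\Lambda \times_d \ZZ^k)$, checking by direct inspection of axioms (S1)--(S4) that $\widetilde\Ee$ is satiated and that $(\Ee_H)^{\sim} = (\widetilde\Ee)_{\widetilde H}$ for any hereditary, $\Ee$-saturated $H$. The whole point of doing this is that $\Lambda \times_d \ZZ^k$ admits the honest functor $b : (\Lambda \times_d \ZZ^k)^0 \to \ZZ^k$ given by $b(v, n) := n$, which satisfies $d(\lambda, n) = b(s(\lambda, n)) - b(r(\lambda, n))$ since $s(\lambda,n) = (s(\lambda), n + d(\lambda))$.

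Next I would identify $C^*(\Lambda, c; \Ee) \rtimes_{\gamma^c} \TT^k$ with $C^*(\Lambda \times_d \ZZ^k, c \times 1; \widetilde\Ee)$. On one hand, the generators $\{s^c_\Ee(\lambda) \otimes z \mapsto z^{-n}\}_{\lambda, n}$ inside the crossed product satisfy (TCK1)--(TCK4) and (CK) for $(\Lambda \times_d \ZZ^k, c \times 1; \widetilde\Ee)$, yielding a map one way. Conversely, translation in the second coordinate gives a $\TT^k$-action on $C^*(\Lambda \times_d \ZZ^k, c \times 1; \widetilde\Ee)$ whose fixed-point algebra is carried isomorphically onto $C^*(\Lambda, c; \Ee)$ by $s^{c \times 1}_{\widetilde\Ee}(\lambda, n) \mapsto s^c_\Ee(\lambda)$; verifying that this intertwines with the dual action and using the gauge-invariant uniqueness theorem (Theorem~\ref{thm:giut}) to check injectivity gives the reverse direction. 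Nonvanishing of generators and of the $\Delta$-products is where Proposition~\ref{prp:nonzero elements} applied in the skew-product graph is needed.

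With that identification in hand, I would apply Lemma~\ref{lem:skew-prod AF}(\ref{it:AF}) to the skew-product family with map $b$: for any finite $N \subseteq \ZZ^k$ closed under $\vee$, the span $B_N$ is AF, and writing $C^*(\Lambda \times_d \ZZ^k, c \times 1; \widetilde\Ee) = \overline{\bigcup_N B_N}$ (the union running over all such $N$) exhibits it as an AF algebra. Finally, Takai duality gives
\[
\bigl(C^*(\Lambda, c; \Ee) \rtimes_{\gamma^c} \TT^k\bigr) \rtimes_{\widehat{\gamma^c}} \ZZ^k \;\cong\; C^*(\Lambda, c; \Ee) \otimes \Kk(L^2(\TT^k)),
\]
so $C^*(\Lambda, c; \Ee)$ is Morita equivalent to a crossed product of an AF algebra by $\ZZ^k$. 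Since $\mathcal N$ is closed under Morita equivalence and under crossed products by $\ZZ^k$, and contains all AF algebras, the conclusions about nuclearity and the UCT follow.

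The main obstacle is the second paragraph: setting up the isomorphism between the crossed product by the gauge action and the twisted relative Cuntz-Krieger algebra of the skew product, because the twist $c$ and its lift $c \times 1$ must be tracked carefully through the construction, and one must confirm that the satiated structure of $\Ee$ passes correctly to $\widetilde\Ee$ so that Theorem~\ref{thm:giut} is available to establish injectivity. Once that identification is in place, the remaining steps are standard bookkeeping with Lemma~\ref{lem:skew-prod AF} and Takai duality.
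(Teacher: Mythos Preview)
Your proposal is correct and follows essentially the same route as the paper: form the skew product $\Lambda \times_d \ZZ^k$ with lifted cocycle $c \times 1$ and lifted set $\Ee \times \ZZ^k$, identify $C^*(\Lambda, c; \Ee) \rtimes_{\gamma^c} \TT^k$ with the twisted relative Cuntz--Krieger algebra of the skew product, apply Lemma~\ref{lem:skew-prod AF} to see that this algebra is AF, and finish with Takai duality. The paper simply cites \cite[Lemma~8.3]{Sims:CJM06} for the crossed-product identification where you outline the two maps and invoke Theorem~\ref{thm:giut}, and it does not bother to check that the lifted set is satiated (this is unnecessary by Proposition~\ref{prp:nonzero elements}, which gives $C^*(\Gamma, c'; \Ff) = C^*(\Gamma, c'; \overline{\Ff})$), but otherwise the arguments coincide.
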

\begin{proof}
Let $\Gamma := \Lambda \times_d \ZZ^k$, let $c' := c \times 1$ and let $\Ff := \Ee \times
\ZZ^k$. The argument of \cite[Lemma~8.3]{Sims:CJM06} shows that $\Ff \subseteq
\FE(\Gamma)$ and that the crossed-product of $C^*(\Lambda, c; \Ee)$ by the gauge action
is isomorphic to $C^*(\Gamma, c'; \Ff)$. Since the subalgebras $B_N$ of $C^*(\Gamma, c';
\Ff)$ described in Lemma~\ref{lem:skew-prod AF} are AF algebras and satisfy $C^*(\Gamma,
c'; \Ff) = \overline{\bigcup_N B_N}$, and since the class of AF algebras is closed under
direct limits, $C^*(\Gamma, c'; \Ff)$ is AF. Now Takai duality implies that $C^*(\Lambda,
c; \Ee)$ is Morita equivalent to a crossed product of an AF algebra by $\ZZ^k$, and the
result follows.
\end{proof}

\section{K-theory}\label{sec:K-theory}

In this section we follow the program of \cite{KumjianPaskEtAl:JMAA13} to show that if
$c$ has the form $c(\mu,\nu) = e^{i\omega(\mu,\nu)}$ for some $\omega \in \Zcat2(\Lambda,
\RR)$, then the $K$-theory of $C^*(\Lambda, c; \Ee)$ is isomorphic to that of
$C^*(\Lambda; \Ee)$.

\begin{thm}\label{K-theory_cocycle}
Let $\Lambda$ be a finitely aligned $k$-graph and suppose that $\Ee \subseteq
\FE(\Lambda)$ is satiated. Suppose that $\omega \in \Zcat2(\Lambda, \RR)$, and define $c
\in \Zcat2(\Lambda, \TT)$ by $c(\mu,\nu) = e^{i\omega(\mu,\nu)}$. Then $C^*(\Lambda,
c;\Ee) $ is unital if and only if $\Lambda^0$ is finite. There is an isomorphism:
$K_*(C^*(\Lambda, c; \Ee) \cong K_*(C^*(\Lambda; \Ee))$ taking $[s_\Ee^c(v)]$ to
$[s_\Ee(v)]$ for each $v \in \Lambda^0$, and taking $[1_{C^*(\Lambda, c; \Ee)}]$ to
$[1_{C^*(\Lambda; \Ee)}]$ if $\Lambda^0$ is finite.
\end{thm}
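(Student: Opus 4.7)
For the unitality claim, the net $\{\sum_{v \in F} s^c_\Ee(v) : F \subseteq \Lambda^0 \text{ finite}\}$ is an approximate identity: by Lemma \ref{lem:TCK consequences} it suffices to check convergence on the dense set of spanning elements $s^c_\Ee(\mu) s^c_\Ee(\nu)^*$, and each of these is absorbed by $s^c_\Ee(r(\mu))$. When $\Lambda^0$ is finite, this approximate identity has a largest element which serves as a unit. Conversely, if $\Lambda^0$ is infinite, then for any finite $F \subseteq \Lambda^0$ and any $w \in \Lambda^0 \setminus F$, Proposition \ref{prp:nonzero elements} gives $\big(1 - \sum_{v \in F} s^c_\Ee(v)\big) s^c_\Ee(w) = s^c_\Ee(w) \neq 0$ in any faithful representation, so no finite sum of vertex projections is a unit and the algebra has no identity.

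For the $K$-theory isomorphism I propose to follow the program of \cite{KumjianPaskEtAl:JMAA13} by interpolating between $c$ and the trivial cocycle via the path $c_t(\mu,\nu) := e^{it\omega(\mu,\nu)}$, $t \in [0,1]$. The plan is to assemble $\{C^*(\Lambda, c_t; \Ee)\}_{t \in [0,1]}$ as the fibers of a continuous $C([0,1])$-algebra $\Aa$. Concretely, I view $\underline{c} : (\mu,\nu) \mapsto [t \mapsto c_t(\mu,\nu)]$ as a $C([0,1], \TT)$-valued $2$-cocycle on $\Lambda$, define $\Aa$ by the obvious $C([0,1])$-linear analogues of the TCK and CK relations, and invoke the gauge-invariant uniqueness theorem (Theorem \ref{thm:giut}) to identify the evaluation of $\Aa$ at each $t$ with $C^*(\Lambda, c_t; \Ee)$, sending the images of the generators of $\Aa$ to $s^{c_t}_\Ee(\lambda)$.

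Next I would exploit Corollary \ref{cor:nuclear}: each fiber is Morita equivalent to $B_{c_t} \rtimes_{\alpha_t} \ZZ^k$ for the AF algebra $B_{c_t} = C^*(\Lambda \times_d \ZZ^k, c_t \times 1; \Ee \times \ZZ^k)$. The matrix-unit description in Lemma \ref{lem:mx units} together with Lemma \ref{lem:skew-prod AF} shows that the Bratteli diagram of $B_{c_t}$ is independent of $t$ (only the structure constants $c_t(\mu,\alpha)\overline{c_t(\nu,\alpha)}$ change, and these vary continuously in $t$), yielding a canonical identification $K_0(B_{c_t}) \cong K_0(B_{c_0})$ that carries $[p^{c_t}_\Ee(\mu)]$ to $[p_\Ee(\mu)]$ for every $\mu$. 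The dual $\ZZ^k$-action on K-theory is determined by its effect on these generator classes and hence is the same for all $t$. Applying the Pimsner--Voiculescu exact sequence at $t = 0$ and $t = 1$ and using naturality produces an isomorphism $K_*(C^*(\Lambda, c; \Ee)) \cong K_*(C^*(\Lambda; \Ee))$; chasing $[s^c_\Ee(v)]$ through the Morita equivalence and the six-term sequence gives the stated correspondence of generators, and taking $v$ over all of $\Lambda^0$ yields $[1_{C^*(\Lambda, c;\Ee)}] \mapsto [1_{C^*(\Lambda;\Ee)}]$ in the unital case.

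The main obstacle is verifying the naturality of Pimsner--Voiculescu along the path, i.e.\ building a commuting ladder between the two exact sequences whose vertical maps are isomorphisms on the AF cores and apply the five lemma. This is precisely where the continuous-field structure of $\Aa$ does the work, via either an equivariant local-triviality statement or a Kasparov product argument as in \cite{KumjianPaskEtAl:JMAA13}. The extension from the row-finite-with-no-sources case treated there to the finitely aligned setting with a satiated $\Ee$ should be routine once the AF core has been described via Lemma \ref{lem:mx units} and Lemma \ref{lem:skew-prod AF}.
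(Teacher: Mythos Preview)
Your strategy is essentially the paper's: interpolate via $c_t = e^{it\omega}$, pass to the skew product $\Lambda \times_d \ZZ^k$ where the fibre algebras are AF, note that the AF structure is independent of $t$, and then push the resulting $K$-theory isomorphism through the $\ZZ^k$-crossed product and the Morita equivalence back to $C^*(\Lambda, c; \Ee)$. The unitality paragraph is fine and is not treated separately in the paper.

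Where you diverge from the paper is in what you call the ``routine'' step. The paper does not merely observe that the Bratteli diagrams of the AF cores $B_{c_t}$ coincide and then appeal to naturality of Pimsner--Voiculescu; as you rightly note, that naturality needs an honest equivariant map, not just an abstract identification of $K$-groups. Instead, the paper builds a single $C_0(\widehat{\RR})$-algebra $C^*(\Lambda, \RR, \omega; \Ee)$ (your $\Aa$ is its restriction to $[0,1]$), and the main technical content of the section is Theorem~\ref{thm:AF field}: an explicit $C^*$-isomorphism
\[
C^*(\Lambda \times_d \ZZ^k, \RR, \omega \times 1; \Ee \times \ZZ^k)\;\cong\;
C^*(\Lambda \times_d \ZZ^k; \Ee \times \ZZ^k) \otimes C^*(\RR),
\]
i.e.\ a genuine trivialisation of the bundle of AF cores. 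This is proved by delicate matrix-unit bookkeeping (Lemmas~\ref{lem:mx times A} and~\ref{lem:inclusions}), recursively correcting the cocycle twist on an increasing family of finite-dimensional subalgebras; this is exactly the finitely-aligned extension you predict is needed, and it is the bulk of the work, not a routine afterthought. Corollary~\ref{cor:core eval iso} then gives that evaluation at any $t \in [0,1]$ is a $K$-isomorphism on the AF bundle, and the paper invokes \cite[Theorem~5.1]{KumjianPaskEtAl:JMAA13} (a statement that if evaluation on a $C([0,1])$-algebra induces $K$-isomorphisms, the same holds after taking $\ZZ^k$-crossed products) rather than running Pimsner--Voiculescu and the five lemma by hand. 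Your outline is correct, but the ``equivariant local-triviality statement'' you allude to is not imported from \cite{KumjianPaskEtAl:JMAA13}; it is re-proved here for the finitely aligned relative setting, and that proof is the substance of the section.
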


The proof of Theorem \ref{K-theory_cocycle} appears at the end of the section; we have to
do some preliminary work first.

Following \cite[Definition~2.1]{KumjianPaskEtAl:JMAA13}, given a finitely aligned
$k$-graph $\Lambda$, a locally compact abelian group $A$ and a cocycle $\omega \in
\Zcat2(\Lambda, A)$, a \emph{Toeplitz $c$-representation of $(\Lambda, A)$} on a
$C^*$-algebra $B$ consists of a map $\phi : \Lambda \to \Mm (B)$ and a homomorphism $\pi
: C^*(A) \to \Mm (B)$ such that $\phi(\lambda)\pi(f) \in B$ for all $\lambda \in \Lambda$
and $f \in C^*(A)$, and such that
\begin{itemize}
\item[(R1)] $\pi(f)\phi(\lambda) = \phi(\lambda)\pi(f)$ for all $\lambda \in \Lambda$
    and $f \in C^*(A)$;
\item[(R2)] $\{\phi(v) : v \in \Lambda^0\}$ is a set of mutually orthogonal
    projections and $\sum_{v \in \Lambda^0} \phi(v) \to 1$ strictly in $\Mm(B)$;
\item[(R3)] $\phi(\lambda)\phi(\mu) = \pi(\omega(\lambda,\mu))\phi(\lambda\mu)$
    whenever $s(\lambda) = r(\mu)$; and
\item[(R4)] the $\phi(\lambda)$ satisfy (TCK3)~and~(TCK4).
\end{itemize}
If the $\phi(\lambda)$ satisfy relation~(CK) with respect to a satiated subset $\Ee$ of
$\FE(\Lambda)$, then $(\phi, \pi)$ is an \emph{$\Ee$-relative $c$-representation} of
$(\Lambda, A)$.

Calculations identical to those of Lemma~\ref{lem:TCK consequences} show that given a
Toeplitz $c$-representation of $(\Lambda, A)$, the $C^*$-algebra $C^*(\phi,\pi) :=
C^*\{\phi(\lambda)\pi(f) : f \in C^*(A), \lambda \in \Lambda\}$ is spanned by
$\{\phi(\mu)\pi(f)\phi(\nu)^* : s(\mu) = s(\nu), f \in C^*(A)\}$, and that
\[
\phi(\nu)^* \phi(\eta)
	= \sum_{\nu\alpha = \eta\beta \in \MCE(\nu,\eta)}
	\phi(\alpha) \pi(\omega(\eta,\beta))\pi(\omega(\nu,\alpha))^* \phi(\beta)^*,
\]
where we are identifying elements of $A$ with the corresponding multiplier unitaries of
$C^*(A)$. There is a universal $C^*$-algebra $C^*(\Lambda, A, \omega; \Ee)$ for
$\Ee$-relative $c$-representations of $(\Lambda, A)$, and we denote the universal
$\Ee$-relative representation by $(\iota^{\Ee, \omega}_\Lambda, \iota^{\Ee, \omega}_A)$.
The argument of \cite[Lemma~2.4]{KumjianPaskEtAl:JMAA13} shows that $C^*(\Lambda, A,
\omega; \Ee)$ is canonically isomorphic to $C^*(\Lambda, A, \omega'; \Ee)$ if $\omega$
and $\omega'$ are cohomologous. Let $\widehat{A}$ denote the Pontryagin dual of $A$. As
in Proposition~2.5 of \cite{KumjianPaskEtAl:JMAA13}, the algebra $C^*(\Lambda, A, \omega;
\Ee)$ is a $C_0(\widehat{A})$-algebra with respect to the inclusion
$i_A^{\Ee,\omega}:C^*(A) \cong C_0(\widehat{A})\hookrightarrow Z\Mm(C^*(\Lambda, A,
\omega; \Ee))$, and for each $\chi \in \widehat{A}$ there is a homomorphism $\pi_\chi :
C^*(\Lambda, A, \omega; \Ee) \to C^*(\Lambda, \chi\circ\omega; \Ee)$ satisfying
$\pi_\chi(\iota^{\Ee, \omega}_\Lambda(\lambda)\iota^{\Ee, \omega}_A(f)) =
f(\chi)s_\Ee^c(\lambda)$. Moreover, $\pi_\chi$ descends to an isomorphism of the fibre
$C^*(\Lambda, A, \omega; \Ee)_\chi$ with $C^*(\Lambda, \chi\circ\omega; \Ee)$.

When $\omega \equiv 1$, the universal properties of $C^*(\Lambda, A, 1, \Ee)$ and
$C^*(\Lambda, 1; \Ee) \otimes C^*(A)$ give an isomorphism $C^*(\Lambda, A, 1; \Ee) \cong
C^*(\Lambda, 1; \Ee) \otimes C^*(A)$.

We now consider a finitely aligned $k$-graph $\Gamma$ endowed with a map $b : \Gamma^0
\to \ZZ^k$ such that $d(\lambda) = b(s(\lambda)) - b(r(\lambda))$ for all $\lambda \in
\Gamma$. Our application is when $\Gamma$ is the skew-product $\Lambda \times_d \ZZ^k$ as
in the preceding section, but it will keep our notation simpler to deal with the general
situation.

The proof of Lemma~8.2 of \cite{Sims:CJM06} shows that given a finite subset $E$ of
$\Gamma$, there is a minimal finite $\tE \subseteq \Gamma$ such that $E \subseteq \tE$
and whenever $\mu,\nu,\eta,\zeta \in \tE$ with $s(\mu)=s(\nu)$, $s(\eta)=s(\zeta)$ and
$\nu\alpha = \eta\beta \in \MCE(\nu,\eta)$, we have $\mu\alpha, \zeta\beta \in \tE$.
(This set plays a similar role to the set $\Pi E$ used to examine the core in
Section~\ref{sec:uniqueness thms}, and is constructed in a similar way.) The set $\tE$
satisfies $\tE = \vee \tE$ and has the property that $T(\tE; \mu) = T(\tE; \nu)$ whenever
$\mu,\nu \in \tE$ and $s(\mu) = s(\nu)$.

Let $\omega \in \Zcat2(\Gamma, A)$ and let $(\phi, \pi)$ be a Toeplitz
$\omega$-representation of $(\Gamma, A)$. For $v \in \Lambda^0$ and a finite $E \subseteq
v\Gamma$, let $\Delta(\phi)^E := \prod_{\lambda \in E} \phi(v) -
\phi(\lambda)\phi(\lambda)^*$. Since the $\pi(\omega(\mu,\nu))$ are unitaries and each
$\pi(\omega(r(\mu),\mu)) = 1_{\Mm(B)}$, the calculations of Lemma~\ref{lem:Delta
commutation} show that
\begin{equation}\label{eq:phi-Delta-comm}
\Delta(\phi)^E \phi(\mu) = \phi(\mu) \Delta(\phi)^{\Ext(\mu;E)}.
\end{equation}
The $\phi(\mu)\phi(\mu)^*$ satisfy the same commutation relations as the $t_\mu t^*_\mu$
in a Toeplitz-Cuntz-Krieger family. So if $E = \tE$, then since $\tE = \vee\tE$ the
argument of \cite[Corollary~3.7]{RaeburnSimsEtAl:JFA04} gives
\begin{equation}\label{eq:projection decomp}
\phi(\lambda) \phi(\lambda)^* = \sum_{\lambda\lambda' \in E}
\Delta(\phi)^{T(E;\lambda\lambda')}.
\end{equation}
The argument of Lemma~\ref{lem:mx units} shows that $\Theta(\phi)^E_{\mu, \nu} :=
\phi(\mu) \Delta(\phi)^{T(E;\mu)} \phi(\nu)^*$ defines matrix units, and that for
$\mu,\nu \in E$ with $s(\mu) = s(\nu)$, we have
\begin{equation}\label{eq:Thetas C*A-span}
\phi(\mu)\phi(\nu)^* = \sum_{\mu\alpha \in E}
\pi(\omega(\mu,\alpha))\pi(\omega(\nu,\alpha))^* \Theta(\phi)^E_{\mu\alpha,\nu\alpha}.
\end{equation}

\begin{lem}\label{lem:mx times A}
Let $\Gamma$ be a finitely aligned $k$-graph and suppose that $b : \Lambda^0 \to \ZZ^k$
satisfies $d(\lambda) = b(s(\lambda)) - b(r(\lambda))$ for all $\lambda$. Suppose that
$\Ff \subseteq \FE(\Gamma)$ is satiated. Let $A$ be a locally compact abelian group and
consider $\omega \in \Zcat2(\Gamma, A)$. If $E = \tE \subseteq \Gamma$ then
\begin{align*}
M^{\Ff, \omega}_E &:= \lsp\{\iota^{\Ff, \omega}_\Gamma(\mu) \iota^{\Ff,\omega}_A(f)
\iota^{\Ff, \omega}_\Gamma(\nu)^* :
		\mu,\nu \in E, f \in C^*(A)\} \\
	&= \lsp\{\Theta(\iota^{\Ff, \omega}_\Gamma)^E_{\mu, \nu}\iota^{\Ff,\omega}_A(f) :
\mu,\nu \in E, f \in C^*(A)\},
\end{align*}
and there is an isomorphism of $M^{\Ff, \omega}_E$ onto $\bigoplus_{v \in s(E)}
M_{Ev}(\CC) \otimes C^*(A)$ that carries each spanning element $\Theta(\iota^{\Ff,
\omega}_\Gamma)^E_{\mu, \nu}\iota^{\Ff,\omega}_A(f)$ to $\theta_{\mu,\nu} \otimes f$.
\end{lem}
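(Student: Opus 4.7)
The plan is to mimic the proof of Lemma~\ref{lem:mx units} while keeping track of the central copy of $C^*(A)$ supplied by $\iota^{\Ff,\omega}_A$. Write $\iota_\Gamma := \iota^{\Ff,\omega}_\Gamma$, $\iota_A := \iota^{\Ff,\omega}_A$ and $\Theta^E_{\mu,\nu} := \Theta(\iota_\Gamma)^E_{\mu,\nu}$ throughout.

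First I would establish the equality of the two span descriptions. One containment is essentially a repackaging of~\eqref{eq:projection decomp}: expanding the product defining $\Delta(\iota_\Gamma)^{T(E;\mu)}$ and using (R3) to rewrite each $\iota_\Gamma(\mu\alpha)\iota_\Gamma(\mu\alpha)^*$ as a conjugate of $\iota_\Gamma(\mu)\iota_\Gamma(\alpha)\iota_\Gamma(\alpha)^*\iota_\Gamma(\mu)^*$ by an $\iota_A$-valued scalar shows that $\Theta^E_{\mu,\nu}\iota_A(f)$ lies in the first span. The reverse containment is immediate from~\eqref{eq:Thetas C*A-span}: multiplying both sides by $\iota_A(f)$ and using (R1) to commute it past each $\Theta$ expresses $\iota_\Gamma(\mu)\iota_A(f)\iota_\Gamma(\nu)^*$ in the second span. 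Note that if $s(\mu)\neq s(\nu)$ then both sides vanish by (TCK1)--(TCK3), so only terms with a common source survive.

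Next I would verify that $\{\Theta^E_{\mu,\nu} : \mu,\nu \in Ev\}$, together with $\iota_A(C^*(A))$, provide a copy of $M_{Ev}(\CC)\otimes C^*(A)$ inside $M^{\Ff,\omega}_E$ for each $v\in s(E)$. Since $E=\tE$ satisfies $T(E;\mu)=T(E;\nu)$ whenever $s(\mu)=s(\nu)$, the argument of Lemma~\ref{lem:mx units}, using~\eqref{eq:phi-Delta-comm} in place of Lemma~\ref{lem:Delta commutation}, shows that $\Theta^E_{\mu,\nu}\Theta^E_{\eta,\xi}=\delta_{\nu,\eta}\Theta^E_{\mu,\xi}$ when $s(\mu)=s(\nu)=s(\eta)=s(\xi)=v$, and that products with $s(\nu)\neq s(\eta)$ vanish because $\phi(\nu)^*\phi(\eta)$ expands as a sum over $\MCE(\nu,\eta)$ of terms that are each killed by an appropriate $\Delta$ factor (using $\Delta^{T(E;\mu)}\leq\phi(v)$). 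By (R1), $\iota_A(C^*(A))$ commutes with every $\Theta^E_{\mu,\nu}$. The universal property of $M_{Ev}(\CC)\otimes C^*(A)$ (matrix units commuting with a commutative $C^*$-algebra) then yields a $*$-homomorphism
\[
\Phi_v : M_{Ev}(\CC)\otimes C^*(A) \to M^{\Ff,\omega}_E,\qquad
 \theta_{\mu,\nu}\otimes f \mapsto \Theta^E_{\mu,\nu}\iota_A(f),
\]
and the vertex-orthogonality established above lets me assemble these into a single $*$-homomorphism $\Phi:\bigoplus_{v\in s(E)} M_{Ev}(\CC)\otimes C^*(A)\to M^{\Ff,\omega}_E$. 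Surjectivity of $\Phi$ is exactly the span equality from Step~1.

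The main obstacle is injectivity, which is where the cocycle machinery really interacts with $C^*(A)$. I would use the $C_0(\widehat A)$-algebra structure of $C^*(\Gamma,A,\omega;\Ff)$ noted before the lemma: a $C_0(\widehat A)$-linear $*$-homomorphism that is injective on each fibre is injective. The map $\Phi$ is $C_0(\widehat A)$-linear by construction, and its fibre at $\chi\in\widehat A$ is the map $\bigoplus_v M_{Ev}(\CC)\to M(s^{\chi\circ\omega}_\Ff)^E$ sending $\theta_{\mu,\nu}\mapsto\Theta(s^{\chi\circ\omega}_\Ff)^E_{\mu,\nu}$. By Lemma~\ref{lem:mx units} applied to the Toeplitz-Cuntz-Krieger $(\Gamma,\chi\circ\omega)$-family $s^{\chi\circ\omega}_\Ff$, this fibre map sends the standard matrix units to a family of matrix units and hence is injective on each simple summand where it is nonzero; a simple summand where $\Theta(s^{\chi\circ\omega}_\Ff)^E_{\mu,\mu}=0$ for $\mu\in Ev$ would force $\Delta(s^{\chi\circ\omega}_\Ff)^{T(E;\mu)}=0$, which by Proposition~\ref{prp:nonzero elements} forces $T(E;\mu)\in\Ff$, and then $\Delta(\iota_\Gamma)^{T(E;\mu)}=0$ in $C^*(\Gamma,A,\omega;\Ff)$ as well, so the corresponding summand on the domain side must also be discarded. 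The cleanest way to handle this bookkeeping is to apply the argument only to those $\mu\in E$ for which $T(E;\mu)\notin\Ff$, so that after this reduction $\Phi$ is fibrewise injective on every $\chi$ and therefore injective globally, as required.
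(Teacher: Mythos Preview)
Your proposal is correct and follows essentially the same route as the paper: establish the span equality via~\eqref{eq:Thetas C*A-span}, verify that the $\Theta^E_{\mu,\nu}$ are matrix units commuting with $\iota_A(C^*(A))$, build the surjection $\Phi$ from the universal property, and then prove injectivity fibrewise using the $C_0(\widehat A)$-algebra structure. The paper's proof is terser---it invokes Lemma~\ref{lem:mx units} to say that nonvanishing of $\Theta(s^{\chi\circ\omega}_\Ff)^E_{\mu,\nu}$ is the same as nonvanishing of $\Theta(s_\Ff)^E_{\mu,\nu}$, hence independent of $\chi$, and then cites \cite[Proposition~C.10(c)]{Williams:CP} for the fibrewise-isometric conclusion---whereas you reach the same cocycle-independence via Proposition~\ref{prp:nonzero elements} and are more explicit about discarding the summands with $T(E;\mu)\in\Ff$; this is the same argument unpacked.

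One small point: your justification for orthogonality between distinct source-blocks (``each killed by an appropriate $\Delta$ factor, using $\Delta^{T(E;\mu)}\le\phi(v)$'') is a little loose. The actual mechanism is that for $\nu\alpha=\eta\beta\in\MCE(\nu,\eta)$ with $s(\nu)\ne s(\eta)$, the defining property of $\tE$ forces either $\alpha\in T(E;\mu)$ or $\beta\in T(E;\eta)$ (one of $\alpha,\beta$ must be nontrivial since $\nu\ne\eta$), and then the corresponding $\Delta$ annihilates the term. This is straightforward to fill in but is not quite the inequality you cite.
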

\begin{proof}
Equation~\eqref{eq:Thetas C*A-span} establishes the displayed equation. We saw above that
the $\Theta(\iota^{\Ff, \omega}_\Gamma)^E_{\mu, \nu}$ are matrix units, and they commute
with the $\iota^{\Ff,\omega}_A(f)$ because the range of $\iota^{\Ff,\omega}_A$ is
central. Now we follow the argument of Lemma~4.1 of \cite{KumjianPaskEtAl:JMAA13}: The
universal property of $\bigoplus_{v \in s(E)} M_{Ev}(\CC) \otimes C^*(A)$ gives a
surjection $\psi : \bigoplus_{v \in s(E)} M_{Ev}(\CC) \otimes C^*(A) \to M^{\Ff,
\omega}_E$ such that $\psi(\theta_{\mu,\nu} \otimes f) = \Theta(\iota^{\Ff,
\omega}_\Gamma)^E_{\mu, \nu}\iota^{\Ff,\omega}_A(f)$. For each $\chi \in \widehat{A}$,
and each $f \in \widehat{A}$ such that $f(\chi) = 1$, the canonical homomorphism
$\pi_\chi : C^*(\Gamma, A, \omega; \Ff) \to C^*(\Gamma, \chi\circ\omega; \Ee)$ carries
the $\Theta(\iota^{\Ee, \omega}_\Gamma)^E_{\mu, \nu}\iota^{\Ee,\omega}_A(f)$ to the
matrix units $\theta(s_\Ee^{\chi\circ\omega})^E_{\mu,\nu}$, and Lemma~\ref{lem:mx units}
shows that any given $\theta(s_\Ee^{\chi\circ\omega})^E_{\mu,\nu}$ is nonzero if and only
if $\theta(s_\Ee)^E_{\mu,\nu}$ is nonzero; so $\pi_\chi$ determines an isomorphism
$(M^{\Ee, \omega}_E\big)_\chi \cong \bigoplus_{v \in s(E)} M_{Ev}(\CC)$. So $\psi$
descends to an isomorphism of each fibre in the trivial bundle $\bigoplus_{v \in s(E)}
M_{Ev}(\CC) \otimes C^*(A)$, and so is isometric by
\cite[Proposition~C.10(c)]{Williams:CP}.
\end{proof}

\begin{lem}\label{lem:inclusions}
Let $\Gamma$ be a finitely aligned $k$-graph. Let $A$ be a locally compact abelian group
and consider $\omega \in \Zcat2(\Lambda, A)$. Let $(\phi, \pi)$ be a Toeplitz
$\omega$-representation of $(\Gamma, A)$. Suppose that $E \subseteq F$ are finite subsets
of $\Gamma$ satisfying $E = \tE$ and $F = \tF$. For each $\mu \in F$ there is a unique
maximal $\iota_\mu \in E$ such that $\mu  \in \iota_\mu\Gamma$, and we have
\[
\Theta(\phi)^E_{\mu,\nu} = \sum_{\mu\alpha \in F, \iota_{\mu\alpha} = \mu}
	\pi(\omega(\mu,\alpha))\pi(\omega(\nu,\alpha))^*
\Theta(\phi)^F_{\mu\alpha,\nu\alpha}.
\]
\end{lem}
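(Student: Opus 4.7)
The proof will proceed in three stages: well-definedness of $\iota_\mu$, reduction to the diagonal case $\mu = \nu$, and the diagonal identity itself.

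For each $\mu \in F$ admitting an $E$-prefix (in particular each $\mu\alpha \in F$ with $\mu \in E$, since then $\mu \le \mu\alpha$), the finite set $P_\mu = \{\eta \in E : \eta \le \mu\}$ is nonempty. Any two of its members $\eta_1, \eta_2$ have $\MCE(\eta_1,\eta_2) \neq \emptyset$, with a unique element dividing $\mu$; the MCE-closure of $E = \tE$ forces that element into $E$. So $P_\mu$ is upward-directed under $\le$ and admits a unique maximum $\iota_\mu$. For the reduction, the inclusion $\Delta(\phi)^{T(E;\mu)} \le \phi(s(\mu)) = \phi(s(\nu))$ yields $\Theta(\phi)^E_{\mu,\mu} \phi(\mu)\phi(\nu)^* = \Theta(\phi)^E_{\mu,\nu}$, and a parallel calculation using (R3) together with the centrality of $\pi(C^*(A))$ (granted by (R1)) gives
\[
\Theta(\phi)^F_{\mu\alpha,\mu\alpha} \phi(\mu)\phi(\nu)^* = \pi(\omega(\mu,\alpha))\pi(\omega(\nu,\alpha))^* \Theta(\phi)^F_{\mu\alpha,\nu\alpha}
\]
for each $\mu\alpha \in F$. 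Applying both identities to the right-hand side of the lemma reduces the whole statement to the diagonal identity $\Theta(\phi)^E_{\mu,\mu} = \sum_{\mu\alpha \in F,\, \iota_{\mu\alpha} = \mu} \Theta(\phi)^F_{\mu\alpha,\mu\alpha}$.

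Set $p = \phi(\mu)\phi(\mu)^*$. Equation~\eqref{eq:projection decomp} applied in $F$ decomposes $p$ as the orthogonal sum of the $\Theta(\phi)^F_{\mu\alpha,\mu\alpha}$ over $\mu\alpha \in F$. I will show each summand is dominated by $\Theta(\phi)^E_{\mu,\mu}$ when $\iota_{\mu\alpha} = \mu$, and orthogonal to it otherwise. In the orthogonal case, $\iota_{\mu\alpha} = \mu\mu'$ for some $\mu' \in T(E;\mu)$ with $\mu' \le \alpha$, so $\Theta(\phi)^F_{\mu\alpha,\mu\alpha} \le \phi(\mu\mu')\phi(\mu\mu')^*$; the factor $(\phi(s(\mu)) - \phi(\mu')\phi(\mu')^*)$ of $\Delta(\phi)^{T(E;\mu)}$ (conjugated by $\phi(\mu)$, using centrality of the $\pi$-image) annihilates $\phi(\mu\mu')\phi(\mu\mu')^*$, giving the required orthogonality. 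In the domination case, the Boolean identity $1 - \prod_i (1 - r_i) = \bigvee_i r_i$ for commuting subprojections and conjugation by $\phi(\mu)$ yield
\[
p - \Theta(\phi)^E_{\mu,\mu} = \bigvee_{\mu' \in T(E;\mu)} \phi(\mu\mu')\phi(\mu\mu')^*,
\]
so it suffices to verify $\Theta(\phi)^F_{\mu\alpha,\mu\alpha} \perp \phi(\mu\mu')\phi(\mu\mu')^*$ for each $\mu' \in T(E;\mu)$. If $\MCE(\alpha,\mu') = \emptyset$ this is immediate from (TCK4). Otherwise, for each $\alpha\eta_1 = \mu'\eta_2 \in \MCE(\alpha,\mu')$, the hypothesis $\mu' \not\le \alpha$ (from $\iota_{\mu\alpha} = \mu$) forces $\eta_1 \neq s(\alpha)$; MCE-closure of $F = \tF$ places $\mu\alpha\eta_1 \in F$, so $\eta_1 \in T(F;\mu\alpha)$. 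The corresponding factor of $\Delta(\phi)^{T(F;\mu\alpha)}$ then annihilates $\phi(\mu\alpha\eta_1)\phi(\mu\alpha\eta_1)^*$, and summing over $\MCE(\alpha,\mu')$ via (TCK4) delivers the desired orthogonality.

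The main obstacle is the domination case in the last paragraph: translating the combinatorial fact $\mu' \not\le \alpha$ into the operator-algebraic orthogonality between $\Theta(\phi)^F_{\mu\alpha,\mu\alpha}$ and each $\phi(\mu\mu')\phi(\mu\mu')^*$ needs the MCE-closure of both $E = \tE$ and $F = \tF$ to produce a factor of $\Delta(\phi)^{T(F;\mu\alpha)}$ that does the killing; everything else is bookkeeping in the twisted TCK relations.
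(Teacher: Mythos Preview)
Your proof is correct and follows essentially the same route as the paper's. The paper phrases the core step as the product identity $\Theta(\phi)^E_{\mu,\mu}\,\Theta(\phi)^F_{\lambda,\lambda} = \delta_{\iota_\lambda,\mu}\,\Theta(\phi)^F_{\lambda,\lambda}$ for all $\lambda\in F$ (treating the extra case $\lambda\notin\mu\Gamma$, which you avoid by starting from the decomposition~\eqref{eq:projection decomp} of $\phi(\mu)\phi(\mu)^*$), and then multiplies by $\phi(\mu)\phi(\nu)^*$ at the end rather than the beginning; but the substantive argument---using $F=\vee F$ and the maximality of $\iota_{\mu\alpha}$ to locate a factor of $\Delta(\phi)^{T(F;\mu\alpha)}$ that kills each $\phi(\mu\mu')\phi(\mu\mu')^*$---is identical.
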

\begin{proof}
Let $N=\bigvee\{d(\lambda) : \lambda \in E, \mu \in \lambda \Gamma\}$. Then $N \leq
d(\mu)$ and since $E = \vee E$, factorising $\mu=\iota_\mu \mu'$ with $d(\iota_\mu)=N$
gives the desired $\iota_\mu$. We claim that for $\mu \in E$ and $\lambda \in F$ we have
$\Theta(\phi)^E_{\mu,\mu} \Theta(\phi)^F_{\lambda,\lambda} = \delta_{\iota_\lambda, \mu}
\Theta(\phi)^F_{\lambda,\lambda}$. First suppose that $\iota_\lambda = \mu$. We have
\begin{align*}
\Theta(\phi)&^E_{\mu,\mu} \Theta(\phi)^F_{\lambda,\lambda} \\
	&= \phi(\mu)\phi(\mu)^* \prod_{\mu' \in T(E;\mu)} \big(\phi(\mu)\phi(\mu)^* -
\phi(\mu\mu')\phi(\mu\mu')^*\big)
\phi(\lambda)\phi(\lambda)^*\Theta(\phi)^F_{\lambda,\lambda}.
\end{align*}
For $\mu' \in T(E;\mu)$ the maximality of $\iota_\lambda = \mu$ ensures that $\lambda
\not\in \mu\mu'\Lambda$. Since $F = \vee F$ and contains $E$, we deduce that if
$\mu\mu'\alpha = \lambda\beta \in \MCE(\mu\mu',\lambda)$, then
$\phi(\lambda)\phi(\lambda)^* - \phi(\lambda\beta)\phi(\lambda\beta)^*$ is a factor in
$\Theta(\phi)^F_{\lambda,\lambda}$, and so $\phi(\mu\mu')\phi(\mu\mu')^*
\Theta(\phi)^F_{\lambda,\lambda} = 0$. So the preceding displayed equation collapses to
\[
\Theta(\phi)^E_{\mu,\mu} \Theta(\phi)^F_{\lambda,\lambda}
	= \phi(\mu)\phi(\mu)^*\Theta(\phi)^F_{\lambda,\lambda} =
\Theta(\phi)^F_{\lambda,\lambda}.
\]

Now suppose that $\mu \not= \iota_\lambda$. Since $\iota_\lambda$ is the maximal initial
segment of $\lambda$ in $E$, we have two cases to consider: either $\iota_\lambda \in
\mu\Lambda \setminus \{\mu\}$ or $\lambda \not\in \mu\Lambda$.  First suppose that
$\iota_\lambda \in \mu\Lambda \setminus\{\mu\}$. Then $\phi(\mu)\phi(\mu)^* -
\phi(\iota_\lambda)\phi(\iota_\lambda)^* \ge \Theta(\phi)^E_{\mu,\mu}$. Since
$\big(\phi(\mu)\phi(\mu)^* - \phi(\iota_\lambda)\phi(\iota_\lambda)^*\big) \perp
\phi(\lambda)\phi(\lambda)^* \ge \Theta(\phi)^F_{\lambda,\lambda}$ it follows that
$\Theta(\phi)^E_{\mu,\mu} \Theta(\phi)^F_{\lambda,\lambda} = 0$. Now suppose that
$\lambda \not\in \mu\Lambda$. Then $\mu\alpha = \lambda\beta \in \MCE(\mu,\lambda)$
implies $\beta \in  T(F;\lambda)$, and then the argument of the preceding paragraph gives
$\phi(\mu)\phi(\mu)^*\Theta(\phi)^F_{\lambda,\lambda} = 0$. Hence
$\Theta(\phi)^E_{\mu,\mu} \Theta(\phi)^F_{\lambda,\lambda} = 0$.  This proves the claim.

Now fix $\mu,\nu \in E$. Equations \eqref{eq:phi-Delta-comm}~and~\eqref{eq:projection
decomp} imply that
\[
\Theta(\phi)^E_{\mu,\nu}
	= \sum_{\mu\alpha \in F} \Theta(\phi)^F_{\mu\alpha,\mu\alpha}
\Theta(\phi)^E_{\mu,\mu}\phi(\mu)\phi(\nu)^*,
\]
and the claim reduces this to
\[
\sum_{\substack{\mu\alpha \in F\\ \iota_{\mu\alpha} = \mu}}
\Theta(\phi)^F_{\mu\alpha,\mu\alpha}\phi(\mu)\phi(\nu)^*
	= \sum_{\substack{\mu\alpha \in F\\ \iota_{\mu\alpha} = \mu}}
\Theta(\phi)^F_{\mu\alpha,\mu\alpha}
		\phi(\mu\alpha)\phi(\mu\alpha)^*\phi(\mu)\phi(\nu)^*.
\]
A by-now familiar computation using the cocycle identity transforms this into
\[
\sum_{\mu\alpha \in F, \iota_{\mu\alpha} = \mu}
\pi(\omega(\mu,\alpha))\pi(\omega(\nu,\alpha))^* \Theta(\phi)^F_{\mu\alpha,\mu\alpha}
\phi(\mu\alpha)\phi(\nu\alpha)^*,
\]
and another application of~\eqref{eq:phi-Delta-comm} completes the proof.
\end{proof}

Now given $\Gamma, \Ff$ and $\omega$ as above and a finite subset $E = \tE$ of $\Gamma$,
we define a map  $\iota^E : \Gamma \to \Gamma$ as follows: if $\lambda \in E\Gamma$, then
$\iota^E_\lambda$ is the maximal element of $E$ such that $\lambda \in \iota^E_\lambda
\Gamma$. For all other $\lambda \in \Gamma$ we set $\iota^E_\lambda = \lambda$. We define
$\tau^E : \Gamma \to \Gamma$ by $\lambda = \iota_\lambda \tau_\lambda$ for all $\lambda$.

\begin{thm}\label{thm:AF field}
Let $\Gamma$ be a finitely aligned $k$-graph. Suppose that there is a map $b : \Gamma^0
\to \ZZ^k$ such that $d(\lambda) = b(s(\lambda)) - b(r(\lambda))$ for all $\lambda$ and
that $\Ff \subseteq \FE(\Gamma)$ is satiated. Let $A$ be a locally compact abelian group
and consider $\omega \in \Zcat2(\Lambda, A)$. There is an isomorphism $C^*(\Lambda, A,
\omega; \Ff) \cong C^*(\Lambda; \Ff) \otimes C^*(A)$ which carries $\iota^{\Ff,
\omega}_\Gamma(\lambda) \iota^{\Ff, \omega}_A(f) \iota^{\Ff, \omega}_\Gamma(\lambda)^*$
to $s_\Ff(\lambda) s_\Ff(\lambda)^* \otimes f$ for all $\lambda \in \Lambda$.
\end{thm}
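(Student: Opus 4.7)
The plan is to construct the isomorphism directly from the universal property of $C^*(\Gamma, A, \omega; \Ff)$, by first trivialising $\omega$. The key intermediate claim is that the hypothesis $d = b \circ s - b \circ r$ forces $\omega$ to be a coboundary: there exists $u : \Gamma \to A$ with $u(v) = 0$ for all $v \in \Gamma^0$ such that $\omega(\lambda,\mu) = u(\lambda) + u(\mu) - u(\lambda\mu)$ for all composable $\lambda,\mu \in \Gamma$. Intuitively, the grading $b$ provides enough ``vertical'' freedom to kill all 2-cocycles; in the case $\Gamma = \Lambda \times_d \ZZ^k$ the 1-cochain $u$ can be built using a function linear in the $\ZZ^k$-coordinate, generalising \cite{KumjianPaskEtAl:JMAA13}, and in the general case $u(\lambda)$ should be constructible by induction on $|d(\lambda)|$ via $u(\mu\nu) := u(\mu) + u(\nu) - \omega(\mu,\nu)$ on any factorisation, with well-definedness under change of factorisation following from the cocycle identity together with the $b$-grading constraint.

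Granted such a trivialisation $u$, define $\phi : \Gamma \to \Mm(C^*(\Gamma;\Ff) \otimes C^*(A))$ by $\phi(\lambda) := s_\Ff(\lambda) \otimes u(\lambda)$, identifying $u(\lambda) \in A$ with the corresponding unitary multiplier of $C^*(A)$, and set $\pi(f) := 1 \otimes f$. Relations (R1), (R2) and (R4) follow immediately from the corresponding properties of $s_\Ff$ together with the centrality of $1 \otimes C^*(A)$; (R3) is the direct verification
\[
\phi(\lambda)\phi(\mu) = s_\Ff(\lambda\mu) \otimes u(\lambda)u(\mu) = s_\Ff(\lambda\mu) \otimes \omega(\lambda,\mu)u(\lambda\mu) = \pi(\omega(\lambda,\mu))\phi(\lambda\mu).
\]
Since $u$ takes unitary values, $\Delta(\phi)^E = \Delta(s_\Ff)^E \otimes 1$, so (CK) holds automatically, and $(\phi,\pi)$ is an $\Ff$-relative $\omega$-representation. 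Its universal property yields a homomorphism $\Phi : C^*(\Gamma,A,\omega;\Ff) \to C^*(\Gamma;\Ff) \otimes C^*(A)$, and the diagonal formula of the theorem is immediate:
\[
\Phi\bigl(\iota^{\Ff,\omega}_\Gamma(\lambda)\iota^{\Ff,\omega}_A(f)\iota^{\Ff,\omega}_\Gamma(\lambda)^*\bigr) = s_\Ff(\lambda)s_\Ff(\lambda)^* \otimes u(\lambda) f u(\lambda)^* = s_\Ff(\lambda)s_\Ff(\lambda)^* \otimes f,
\]
using commutativity of $C^*(A)$ in the last equality.

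Surjectivity follows because the image of $\Phi$ contains $\phi(\lambda)\pi(u(\lambda)^*) = s_\Ff(\lambda) \otimes 1$ for each $\lambda$, together with $\pi(f) = 1 \otimes f$, and these jointly generate $C^*(\Gamma;\Ff) \otimes C^*(A)$. For injectivity, I would appeal to the $C_0(\widehat A)$-algebra structure discussed after Lemma~\ref{lem:mx times A}: both sides are $C_0(\widehat A)$-algebras, and the fibre map at $\chi \in \widehat A$ reduces to the isomorphism of Proposition~\ref{prp:H2-dependence} associated to the coboundary $\chi \circ u$, so each $\Phi_\chi$ is an isomorphism and hence $\Phi$ is too. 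The main obstacle is the cohomological triviality claim used to produce $u$; once $u$ is in hand, the rest of the argument is formal. Should the induction on $|d(\lambda)|$ prove too delicate in the $k$-graph setting, an alternative is to build the isomorphism as a direct limit of the isomorphisms on the subalgebras $M^{\Ff,\omega}_E$ produced by Lemma~\ref{lem:mx times A}, tracking compatibility with the inclusions described in Lemma~\ref{lem:inclusions} and correcting cocycle discrepancies by inner automorphisms at each stage.
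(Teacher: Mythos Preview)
Your central claim---that the grading $d = b\circ s - b\circ r$ forces every $\omega \in \Zcat2(\Gamma, A)$ to be a coboundary---is false, and this is where the argument breaks. Consider the $2$-graph $\Gamma$ with four vertices $v_{ij}$, $i,j \in \{0,1\}$, graded by $b(v_{ij}) = (i,j)$; parallel edges $a,a' \in v_{00}\Gamma^{e_1} v_{10}$, $c,c' \in v_{01}\Gamma^{e_1} v_{11}$, $b,b' \in v_{00}\Gamma^{e_2} v_{01}$ and $d,d' \in v_{10}\Gamma^{e_2} v_{11}$; and factorisation rule $ad=bc$, $ad'=bc'$, $a'd=b'c$, $a'd'=b'c'$. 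There are no composable triples of nontrivial morphisms, so the cocycle identity is vacuous and \emph{every} normalised $2$-cochain is a cocycle. Your inductive recipe $u(\mu\nu) := u(\mu)+u(\nu)-\omega(\mu,\nu)$ is well-defined on the four degree-$(1,1)$ paths only if both factorisations of each agree; the alternating sum of the resulting four equations in the eight edge-values of $u$ collapses to the obstruction
\[
\omega(a,d)-\omega(b,c)-\omega(a,d')+\omega(b,c')-\omega(a',d)+\omega(b',c)+\omega(a',d')-\omega(b',c') = 0,
\]
which fails for, say, $A=\ZZ$ with $\omega(a,d)=\omega(a',d')=1$ and all other values $0$. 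Neither the grading (which constrains degrees, not cocycle values) nor the cocycle identity (empty here) rescues this.

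Your fallback is exactly the paper's proof. The paper fixes an exhaustion $E_1 \subseteq E_2 \subseteq \cdots$ of $\Gamma$ by finite sets with $E_i = \tE_i$ and recursively defines corrections $\kappa_i(\mu,\nu) \in A$ by $\kappa_0 \equiv 0$ and $\kappa_{i+1}(\mu,\nu) = \kappa_i(\mu,\nu) - \omega(\iota^{E_i}_\mu,\tau^{E_i}_\mu) + \omega(\iota^{E_i}_\nu,\tau^{E_i}_\nu)$. These yield isomorphisms $\psi_i : M^{\Ff,\omega}_{E_i} \to M^{\Ff}_{E_i}$ rescaling each matrix unit $\Theta^{E_i}_{\mu,\nu}$ by the central unitary $\iota^{\Ff,\omega}_A(\kappa_i(\mu,\nu))$, and a direct calculation using Lemma~\ref{lem:inclusions} shows $\psi_{i+1}|_{M^{\Ff,\omega}_{E_i}} = \psi_i$, so they glue to the desired isomorphism. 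The moral is that $\omega$ can be trivialised \emph{coherently along a filtration} even when no global $1$-cochain $u$ exists; the $\kappa_i$ depend on the chosen exhaustion and do not arise from a single $u$.
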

\begin{proof}
Fix an increasing sequence $E_1 \subseteq E_2 \subseteq E_3 \subseteq  \cdots$ of subsets
of $\Gamma$ such that each $E_i = \tE_i$ and $\bigcup_i E_i = \Gamma$. Recursively define
maps $\kappa_i : \{(\mu,\nu) \in \Gamma : s(\mu) = s(\nu)\} \to \Mm C^*(A)$ by $\kappa_0
\equiv 1_{\Mm C^*(A)}$ and
\[
\kappa_{i+1}(\mu,\nu) = \kappa_i(\mu,\nu) - \omega(\iota^{E_i}_\mu, \tau^{E_i}_\mu)
    + \omega(\iota^{E_i}_\nu, \tau^{E_i}_\nu).
\]

For each $i$, define a linear map $\psi_i : M^{\Ff, \omega}_{E_i} \to M^\Ff_{E_i}$ by
\[
\psi_i\big(\iota^{\Ff, \omega}_A(f)\Theta(\iota^{\Ff,
\omega}_\Gamma)^{E_i}_{\mu,\nu}\big) = \iota^{\Ff,
\omega}_A(\kappa_i(\mu,\nu))\iota^{\Ff,
\omega}_A(f)\Theta(\iota^{\Ff}_\Gamma)^{E_i}_{\mu,\nu}.
\]
An induction argument shows that each $\kappa_i$ satisfies $\kappa_i(\nu,\mu) =
-\kappa_i(\mu,\nu)$ and that $\kappa_i(\lambda,\mu)+\kappa_i(\mu,\nu) =
\kappa_i(\lambda,\nu)$, and so $\psi_i$ preserves adjoints and multiplication. Two
applications of Lemma~\ref{lem:mx times A} show that $\psi_i$ is an isomorphism. Using
the definition of the $\kappa_i$ and Lemma~\ref{lem:inclusions}, we calculate:
\begin{align*}
\psi_i&(\Theta(\iota^{\Ff, \omega}_\Gamma)_{\mu,\nu}) \\
	&= \iota^{\Ff, \omega}_A(\kappa_i(\mu,\nu)) \Theta(\iota^{\Ff}_\Gamma)^{E_i}_{\mu,\nu} \\
	&= \sum_{\mu\alpha \in E_{i+1}, \iota_{\mu\alpha} = \mu}
		\iota^{\Ff, \omega}_A(\kappa_i(\mu,\nu)) \Theta(\iota^{\Ff}_\Gamma)^{E_{i+1}}_{\mu\alpha,\nu\alpha} \\
	&=  \sum_{\mu\alpha \in E_{i+1}, \iota_{\mu\alpha} = \mu}
		\iota^{\Ff, \omega}_A(\kappa_{i+1}(\mu\alpha,\nu\alpha) + \omega(\mu,\alpha) -
\omega(\nu,\alpha))
		\Theta(\iota^{\Ff}_\Gamma)^{E_{i+1}}_{\mu\alpha,\nu\alpha} \\
	&= \sum_{\mu\alpha \in E_{i+1}, \iota_{\mu\alpha} = \mu}
		\psi_{i+1}\big(\iota^{\Ff, \omega}_A(\omega(\mu,\alpha))\iota^{\Ff,
\omega}_A(\omega(\nu,\alpha))^*
		\Theta(\iota^{\Ff, \omega}_\Gamma)^{E_{i+1}}_{\mu\alpha,\nu\alpha}\big)\\
	&= \psi_{i+1}(\Theta(\iota^{\Ff, \omega}_\Gamma)^{E_i}_{\mu,\nu}).
\end{align*}
Hence there is an isomorphism $\psi_\infty : C^*(\Gamma, A, \omega; \Ff) \to C^*(\Gamma,
A; \Ff)$ such that $\psi_\infty|_{M^{\Ff, \omega}_{E_i}} = \psi_i$. The
formula~\eqref{eq:projection decomp} gives $\psi_\infty\big(\iota^{\Ff,
\omega}_\Gamma(\lambda) \iota^{\Ff, \omega}_A(f) \iota^{\Ff,
\omega}_\Gamma(\lambda)^*\big) = s_\Ff(\lambda) s_\Ff(\lambda)^* \otimes f$ for each
$\lambda$ and $f$.
\end{proof}

If $A$ is a $C_0(X)$-algebra in the sense that there is a nondegenerate homomorphism $i :
C_0(X) \to \Zz\Mm(A)$, and if $I \subseteq X$ is closed, then we write $A|_I$ for the
quotient of $A$ by the ideal generated by $\{i(f) : f|_I = 0\}$.

\begin{cor}\label{cor:core eval iso}
Let $\Lambda$ be a finitely aligned $k$-graph and suppose that $\Ee \subseteq
\FE(\Lambda)$ is satiated. Suppose that $\omega \in \Zcat2(\Lambda, \RR)$. There is an
isomorphism $C^*(\Lambda \times_d \ZZ^k, A, \omega \times 1; \Ee \times \ZZ^k)|_{[0,1]}
\cong C^*(\Lambda \times_d \ZZ^k; \Ee \times \ZZ^k) \otimes C([0,1])$ that carries
$\iota^{\Ee \times \ZZ^k\!,\, \omega \times 1}_{\Lambda \times_d \ZZ^k}(\lambda)
\iota^{\Ee \times \ZZ^k\!,\, \omega \times 1}_{A}(f)\iota^{\Ee \times \ZZ^k\!,\, \omega
\times 1}_{\Lambda \times_d \ZZ^k}(\lambda)^*$ to $s_{\Ee \times
\ZZ^k}^{\chi\circ\omega}(\lambda) s_{\Ee \times \ZZ^k}^{\chi\circ\omega}(\lambda)^*
\otimes f$ for all $\lambda, f$. In particular, evaluation at any character $\chi \in
[0,1] \subseteq \widehat{\RR}$ determines an isomorphism $K_*(C^*(\Lambda \times_d \ZZ^k,
A, \omega \times 1; \Ee \times \ZZ^k)|_{[0,1]}) \cong K_*(C^*(\Lambda \times_d \ZZ^k,
\chi\circ\omega; \Ee \times \ZZ^k))$ which carries $\Big[\iota^{\Ee \times \ZZ^k\!,\,
\omega \times 1}_{\Lambda \times_d \ZZ^k}(\lambda) \iota^{\Ee \times \ZZ^k\!,\, \omega
\times 1}_{\Lambda \times_d \ZZ^k}(\lambda)^*\Big]$ to $\Big[s_{\Ee \times
\ZZ^k}^{\chi\circ\omega}(\lambda) s_{\Ee \times
\ZZ^k}^{\chi\circ\omega}(\lambda)^*\Big]$.
\end{cor}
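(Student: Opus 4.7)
The strategy is to deduce both assertions by applying Theorem~\ref{thm:AF field} to the skew-product $\Gamma := \Lambda \times_d \ZZ^k$ with $A = \RR$, and then implementing the $[0,1]$-restriction on the second tensor factor.

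First, take $b : \Gamma^0 \to \ZZ^k$ to be $b(v,n) := n$. This satisfies the hypothesis of Theorem~\ref{thm:AF field} because $b(s(\lambda,n)) - b(r(\lambda,n)) = (n+d(\lambda)) - n = d(\lambda) = d(\lambda,n)$. By the argument of \cite[Lemma~8.3]{Sims:CJM06} invoked in the proof of Corollary~\ref{cor:nuclear}, $\Ee \times \ZZ^k$ is a satiated subset of $\FE(\Gamma)$. Theorem~\ref{thm:AF field} therefore yields an isomorphism
\[
\psi_\infty : C^*(\Gamma, \RR, \omega \times 1; \Ee \times \ZZ^k) \;\cong\; C^*(\Gamma; \Ee \times \ZZ^k) \otimes C^*(\RR)
\]
carrying $\iota^{\Ee \times \ZZ^k,\,\omega\times 1}_{\Gamma}(\lambda)\, \iota^{\Ee \times \ZZ^k,\,\omega\times 1}_{A}(f)\, \iota^{\Ee \times \ZZ^k,\,\omega\times 1}_{\Gamma}(\lambda)^*$ to $s_{\Ee \times \ZZ^k}(\lambda) s_{\Ee \times \ZZ^k}(\lambda)^* \otimes f$.

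Second, use Pontryagin duality to identify $C^*(\RR) \cong C_0(\widehat{\RR})$, so that the image of $\iota^{\Ee \times \ZZ^k,\,\omega\times 1}_{A}$ is the canonical copy of $C_0(\widehat{\RR})$ in the centre of $C^*(\Gamma; \Ee \times \ZZ^k) \otimes C_0(\widehat{\RR})$. Under $\psi_\infty$, the $C_0(\widehat{\RR})$-algebra structure on the left-hand side (given by $i^{\Ee \times \ZZ^k,\,\omega\times 1}_A$ as described before the corollary) matches the tautological one on the right-hand side. The ideal generated by $\{i_A(f) : f|_{[0,1]} = 0\}$ then becomes $C^*(\Gamma; \Ee \times \ZZ^k) \otimes \{f \in C_0(\widehat{\RR}) : f|_{[0,1]} = 0\}$, and quotienting gives the first claimed isomorphism
\[
C^*(\Gamma, \RR, \omega \times 1; \Ee \times \ZZ^k)\big|_{[0,1]} \;\cong\; C^*(\Gamma; \Ee \times \ZZ^k) \otimes C([0,1]),
\]
with the tracking of generators inherited from $\psi_\infty$.

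Third, for the K-theory statement, fix $\chi \in [0,1]$. Since $C([0,1])$ is contractible as a $C^*$-algebra (a linear reparametrisation gives a homotopy between $\id_{C([0,1])}$ and $\ev_\chi$ composed with the unit inclusion $\CC \hookrightarrow C([0,1])$), the evaluation $\id \otimes \ev_\chi : C^*(\Gamma; \Ee \times \ZZ^k) \otimes C([0,1]) \to C^*(\Gamma; \Ee \times \ZZ^k)$ is a $KK$-equivalence and hence induces isomorphisms in $K$-theory. Under the isomorphism of the previous paragraph, $\id \otimes \ev_\chi$ corresponds to the fibre evaluation homomorphism $\pi_\chi$ from the restricted algebra onto its fibre $C^*(\Gamma, \chi \circ \omega; \Ee \times \ZZ^k)$ (as reviewed before the corollary, $\pi_\chi$ carries $\iota_\Gamma(\lambda)$ to $s^{\chi\circ\omega}(\lambda)$). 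Therefore $\pi_\chi$ induces an isomorphism $K_*(C^*(\Gamma, \RR, \omega\times 1; \Ee\times\ZZ^k)|_{[0,1]}) \cong K_*(C^*(\Gamma, \chi\circ\omega; \Ee\times\ZZ^k))$, and it carries the class of the range projection $\iota_\Gamma(\lambda)\iota_\Gamma(\lambda)^*$ to the class of $s^{\chi\circ\omega}(\lambda)s^{\chi\circ\omega}(\lambda)^*$, as claimed.

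There is no serious obstacle: the only point that needs care is verifying that the $C_0(\widehat{\RR})$-algebra structure used to form the restriction matches what is obtained from $\psi_\infty$ via Pontryagin duality on the second factor, but this is built into the construction of $\psi_\infty$ in Theorem~\ref{thm:AF field} (where $\iota_A$ lands in the multiplier algebra and corresponds under the isomorphism to $1 \otimes f$). Everything else is either formal tracking of generators or a standard invocation of homotopy invariance of $K$-theory.
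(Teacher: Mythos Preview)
Your proof is correct and follows essentially the same approach as the paper: apply Theorem~\ref{thm:AF field} to the skew-product (with the map $b(v,n)=n$), identify $C^*(\RR)\cong C_0(\widehat{\RR})$, and then restrict to $[0,1]$. The only difference is that for the $K$-theory assertion the paper invokes the K\"unneth theorem (via \cite[Corollary~4.3]{KumjianPaskEtAl:JMAA13}), whereas you use the contractibility of $C([0,1])$ and homotopy invariance of $K$-theory; these are equivalent here and your route is arguably more elementary.
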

\begin{proof}
Use Theorem~\ref{thm:AF field} and the K\"unneth theorem (see
\cite[Corollary~4.3]{KumjianPaskEtAl:JMAA13}).
\end{proof}

We can now prove the main result of the section, Theorem \ref{K-theory_cocycle}

\begin{proof}[Proof of Theorem \ref{K-theory_cocycle}]
We follow the proof of \cite[Theorem~5.4]{KumjianPaskEtAl:JMAA13}. As in
Corollary~\ref{cor:nuclear}, the argument of \cite[Lemma~8.3]{Sims:CJM06} shows that
$C^*(\Lambda \times_d \ZZ^k, \RR, \omega \times 1; \Ee \times \ZZ^k)$ is isomorphic to
the crossed product of $C^*(\Lambda, \RR, \omega; \Ee)$ by the gauge action and that the
inclusion of $C^*(\Lambda, \RR, \omega; \Ee)$ into the crossed product takes the $K_0$
class of an $\iota^{\Ee, \omega}_\Lambda(v)$ to the class of $\iota^{\Ee \times \ZZ^k;
\omega \times 1}_{\Lambda \times_d \ZZ^k}((v,0))$.

The argument of \cite[Lemma~5.2]{KumjianPaskEtAl:JMAA13} goes through more or less
verbatim (substitute ``relative Cuntz-Krieger family" for ``Cuntz-Krieger family" as
necessary) to prove that translation in the $\ZZ^k$ coordinate on $\Lambda \times_d
\ZZ^k$ induces the action $\hat{\gamma}$ of $\ZZ^k$ on $C^*(\Lambda \times_d \ZZ^k, \RR,
\omega; \Ee)$ that is dual to the gauge-action, that the projection $P_0 = \sum_{v \in
\Lambda^0} \iota^{\Ee \times \ZZ^k; \omega \times 1}_{\Lambda \times_d \ZZ^k}((v,0))$ is
full in $C^*(\Lambda \times_d \ZZ^k, \RR, \omega \times 1; \Ee \times \ZZ^k)
\times_{\hat\gamma} \ZZ^k$ and that the corner it determines is isomorphic to
$C^*(\Lambda, \RR, \omega; \Ee)$ via an isomorphism that takes the generating projection
associated to $(v,0) \in (\Lambda \times_d \ZZ^k)^0$ to the generating projection
associated to $v \in \Lambda$.

Lemma~5.3 of \cite{KumjianPaskEtAl:JMAA13} implies that the inclusion of $C^*(\RR)$ in
the centre of $C^*(\Lambda \times_d \ZZ^k, \RR, \omega \times 1; \Ee \times \ZZ^k)
\times_{\hat{\gamma}} \ZZ^k$ makes it into a $C_0(\RR)$ algebra whose fibre over $\chi
\in \widehat{\RR}$ is $C^*(\Lambda \times_d \ZZ^k, \chi\circ(\omega \times 1); \Ee \times
\ZZ^k) \times_{\hat{\gamma}^{\chi\circ\omega}} \ZZ^k$. Since Corollary~\ref{cor:core eval
iso} implies that evaluation at each point of $[0,1]$ induces an isomorphism in
$K$-theory on $C^*(\Lambda \times_d \ZZ^k, \RR, \omega \times 1; \Ee \times
\ZZ^k)|_{[0,1]}$, Theorem~5.1 of \cite{KumjianPaskEtAl:JMAA13} implies that the same is
true of the crossed product; applying this at $t=0$ and $t=1$ gives the result.
\end{proof}

\section{Simplicity}\label{sec:simplicity}

In Corollary~\ref{cor:CKUT} we showed that if $\Lambda$ is cofinal and aperiodic in the
sense of \cite{LewinSims:MPCPS10}, then each $C^*(\Lambda, c)$ is simple. In the
untwisted setting, these conditions are also necessary (see
\cite[Theorem~3.4]{LewinSims:MPCPS10}), but the example of rotation algebras, discussed
in the final paragraph of the introduction, shows that this is not to be expected in the
twisted setting. In this section we give a sufficient condition for simplicity of twisted
$C^*$-algebras associated to $k$-graphs that are not aperiodic.

A \emph{bicharacter} of $\ZZ^k$ is a map $c : \ZZ^k \times \ZZ^k \to \TT$ such that
\[
c(m,n)c(m,n') = c(m,n+n')\qquad\text{and}\qquad c(m,n)c(m',n) = c(m+m', n).
\]
This implies that $c(0,n) = c(m,0) = 1$ and that $c(-m,n) = \overline{c(m,n)} = c(m,-n)$
for all $m,n$. A bicharacter is \emph{skew-symmetric}\footnote{symplectic in the language
of \cite{OPT}.} if it has the additional property that $c(n,m) = \overline{c(m,n)}$ for
all $m,n$.

For $c \in Z^2(\ZZ^k, \TT)$, we write $c^*$ for the cocycle $c^*(m,n) =
\overline{c(n,m)}$ (note the reversal of variables; so $c^* \not= \overline{c}$).
Proposition~3.2 of \cite{OPT} shows that the product $cc^*$, given by $(cc^*)(m,n) =
c(m,n)\overline{c(n,m)}$, is a skew-symmetric bicharacter, and moreover that the map $c
\mapsto cc^*$ has kernel $B^2(\ZZ^k, \TT)$ and determines an isomorphism of $H^2(\ZZ^k,
\TT)$ onto the group of skew-symmetric bicharacters of $\ZZ^k$. It follows immediately
(or see \cite{Brown:cohomology}) that $H^2(\ZZ^k, \TT)$ is isomorphic to
$\TT^{k(k-1)/2}$, and that every class has a representative that is a bicharacter: given
$c \in Z^2(\ZZ^k, \TT)$, form the skew-symmetric bicharacter $cc^*$, and let $\tilde{c}$
be the unique bicharacter of $\ZZ^k$ such that $\tilde{c}(e_i, e_j) = cc^*(e_i, e_j)$ if
$i > j$ and $\tilde{c}(e_i, e_j) = 1$ if $i \le j$. Then $\tilde{c}\tilde{c}^*(e_i, e_j)
= cc^*(e_i, e_j)$ for all $i,j$, and since both are bicharacters, the two agree. Hence
\cite[Proposition~3.2]{OPT} discussed above shows that $c$ and $\tilde{c}$ represent the
same class in $H^2(\ZZ^k, \TT)$.

A skew-symmetric bicharacter $c$ is called \emph{nondegenerate} if there is no nonzero $m
\in \ZZ^k$ such that $c(m,n) = 1$ for all $n$. So if $c \in Z^2(\ZZ^k, \TT)$ is a
bicharacter, then $cc^*$ is nondegenerate if and only if there is no nonzero $m$ such
that $c(n,m) = c(m,n)$ for all $n$ (equivalently, no nonzero $m$ satisfies
$c(m,e_i)=c(e_i,m)$ for all $i \in \{1,\cdots, k\}$).

Given $c \in Z^2(\ZZ^k, \TT)$, the noncommutative torus $A(c)$ is the universal algebra
generated by unitaries $\{U_m : m \in \ZZ^k\}$ satisfying $U_m U_n = c(m,n)U_{m+n}$ for
all $m,n \in \ZZ^k$. An argument like that of Proposition~\ref{prp:H2-dependence} shows
that if $c_1$ and $c_2$ are cohomologous in $Z^2(\ZZ^k, \TT)$, then $A(c_1) \cong
A(c_2)$. So the remarks in the preceding paragraph (see also Remark~1.2 of
\cite{Phillips:xx06}) show that $A(c)$ is universal for unitaries $\{U_i : i \le k\}$
satisfying $U_i U_j = cc^*(e_i, e_j)U_j U_i$.  Theorem~3.7 of \cite{Slawny:CMP72}
combined with~1.8 of \cite{Elliott:MSM80} (see \cite[Theorem~1.9]{Phillips:xx06}) shows
that the noncommutative torus $A(c)$ is simple if and only if $cc^*$ is nondegenerate.

To state the main result of the section, observe that if $\Lambda$ is row-finite and has
no sources, then $v\Lambda^n \in \FE(\Lambda)$ for each $v \in \Lambda^0$ and $n \in
\NN^k\setminus\{0\}$. Hence a filter $S$ is $\FE(\Lambda)$-compatible if and only if it
is maximal in the sense that $S \cap \Lambda^n \not= \emptyset$ for all $n \in \NN^k$. We
call such filters \emph{ultrafilters}. Recall that if $S$ is an ultrafilter, and $s(\mu)
= r(S)$, then $\ell_\mu(S)$ is the filter $\{\nu \in \lambda : \nu\Lambda \cap \mu S
\not= \emptyset\}$, which is then also an ultrafilter.

Under the bijection of Remark~\ref{rmk:filters vs paths}, the ultrafilters of $\Lambda$
correspond to the infinite paths used in \cite{KumjianPask:NYJM00}, and if this bijection
carries the infinite path $x$ to the ultrafilter $S$, then for $\mu \in \Lambda r(x)$, it
carries the infinite path $\mu x$ to the ultrafilter $\ell_\mu(S)$.

As in \cite{CarlsenKangEtAl:xx13}, for $\mu,\nu$ in a row-finite $k$-graph $\Lambda$ with
no sources, we write $\mu \sim \nu$ if $\mu x  = \nu x$ for every infinite path $x$ in
$s(\mu)\Lambda^\infty$. Equivalently, $\mu \sim \nu$ if $\ell_\mu(S) = \ell_\nu(S)$ for
every ultrafilter $S$ such that $r(S) = s(\mu)$. We define $\Per(\Lambda)$ to be the
subgroup of $\ZZ^k$ generated by $\{d(\mu) - d(\nu) : \mu \sim \nu\}$.

In this section, if $c \in Z^2(\ZZ^k, \TT)$ and $\Lambda$ is a $k$-graph with degree
functor $d$, then we abuse notation slightly and write $c \circ d$ for the cocycle
$c\circ d(\lambda,\mu) := c(d(\lambda), d(\mu))$.

\begin{thm}\label{thm:simple}
Let $\Lambda$ be a row-finite $k$-graph with no sources, and take $c \in Z^2(\ZZ^k,
\TT)$. Suppose that $(cc^*)|_{\Per(\Lambda)}$ is nondegenerate. Then $C^*(\Lambda, c
\circ d)$ is simple if and only if $\Lambda$ is cofinal.
\end{thm}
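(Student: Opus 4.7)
The plan is to prove the two implications separately.

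For necessity of cofinality, I would argue that if $\Lambda$ is not cofinal, the essentially combinatorial argument of \cite{LewinSims:MPCPS10} produces a proper nonempty hereditary, $\FE(\Lambda)$-saturated subset $H \subseteq \Lambda^0$. This argument does not involve the cocycle, so at most cosmetic modifications are required. Theorem~\ref{thm:g-i ideals} then exhibits $I_{H, \Ee_H}$ as a proper nonzero gauge-invariant ideal of $C^*(\Lambda, c \circ d)$, so the algebra is not simple.

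For sufficiency, assume $\Lambda$ is cofinal and $(cc^*)|_{\Per(\Lambda)}$ is nondegenerate, and let $I \subseteq C^*(\Lambda, c\circ d)$ be a nonzero ideal. By the cofinality part of the proof of Corollary~\ref{cor:CKUT}, any ideal containing a single vertex projection $s^{c\circ d}(v)$ coincides with $C^*(\Lambda, c\circ d)$, so it suffices to produce a vertex $v$ with $s^{c\circ d}(v) \in I$.

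My strategy is to single out inside $C^*(\Lambda, c \circ d)$ the subalgebra $N$ generated by the spanning elements $s^{c\circ d}(\mu) s^{c\circ d}(\nu)^*$ whose initial and final projections coincide, as flagged in the introduction. For such pairs $(\mu, \nu)$ one has $\mu \sim \nu$, so $d(\mu) - d(\nu) \in \Per(\Lambda)$, and the computations of Lemma~\ref{lem:TCK consequences} show that products of such elements pick up scalars that, after absorbing a coboundary, reduce to the defining relations of $A(cc^*|_{\Per(\Lambda)})$. One therefore expects $N$ to decompose locally as a direct sum of copies of this noncommutative torus indexed by suitable equivalence classes of vertices. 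I would then construct a faithful conditional expectation $E : C^*(\Lambda, c\circ d) \to N$ by averaging the gauge action over a suitable closed subgroup of $\TT^k$ (morally $\Per(\Lambda)^\perp$). Given a nonzero $a \in I$, faithfulness of $E$ yields $0 \neq E(a^*a) \in N \cap I$; simplicity of each summand of $N$, guaranteed by the nondegeneracy hypothesis together with \cite{Slawny:CMP72}, then forces $N \cap I$ to contain the unit of some summand, which by construction of $N$ dominates a vertex projection $s^{c\circ d}(v)$.

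The main obstacle will be the construction and analysis of $N$: verifying that products of the spanning elements remain in $N$, that $N$ really does admit the claimed local decomposition as a direct sum of noncommutative tori, and that the averaging procedure produces a \emph{faithful} (not merely contractive) conditional expectation onto $N$. As the authors warn in the introduction, the interplay between the equivalence relation $\sim$ (defined via infinite paths) and the cocycle makes this significantly more delicate than in the untwisted setting, and I expect it to consume most of the technical work. A secondary subtlety is passing from ``$N \cap I$ contains the unit of a torus summand'' to ``$I$ contains an honest vertex projection $s^{c\circ d}(v)$'', which may require a further appeal to cofinality together with the local description of $N$.
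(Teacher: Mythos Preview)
Your necessity argument is a valid alternative in the row-finite no-sources setting; the paper instead proves a separate Lemma~\ref{lem:cofinality necessary} (valid for arbitrary finitely aligned $\Lambda$ and arbitrary $c$) via the path-space representation and filters, but your route through hereditary saturated sets and Theorem~\ref{thm:g-i ideals} works here.

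The sufficiency argument has a real gap at the step you flag as the main obstacle. Averaging the gauge action over $\Per(\Lambda)^\perp \subseteq \TT^k$ yields a faithful expectation onto $\clsp\{s^{c\circ d}(\mu) s^{c\circ d}(\nu)^* : d(\mu) - d(\nu) \in \Per(\Lambda)\}$, not onto $N = \clsp\{s^{c\circ d}(\mu) s^{c\circ d}(\nu)^* : \mu \sim \nu\}$. The relation $\mu \sim \nu$ is defined via infinite paths and is strictly finer than $d(\mu)-d(\nu) \in \Per(\Lambda)$; already when $\Per(\Lambda)=0$ the former gives the diagonal while the latter gives the whole core. No subgroup of $\TT^k$ can detect the difference, since the gauge action sees only degrees, so no averaging procedure of this kind lands in $N$, and the torus decomposition is not available in the larger fixed-point algebra.

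The paper's replacement for this step is the technical core of the proof. One first passes to the full corner determined by $\Hper$, so that Proposition~\ref{prp:rotation iso} applies at every vertex. Then the map onto $N$ is built not by averaging but by a cut-down: for $\mu\not\sim\nu$ with $s(\mu)=s(\nu)$, Lemma~\ref{eq:sim vs MCE} supplies $\tau$ with $\MCE(\mu\tau,\nu\tau)=\emptyset$, and compression by the projections $t_{\lambda\tau} t_{\lambda\tau}^*$ annihilates the $\mu\not\sim\nu$ terms while, after passing through the isomorphisms $\rho^\lambda_t$ of Proposition~\ref{prp:rotation iso}, preserving the norm of the $\mu\sim\nu$ part. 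This is Proposition~\ref{prp:norm-decreasing}, and here nondegeneracy of $(cc^*)|_{\Per(\Lambda)}$ is used not merely for simplicity of one torus but to guarantee that those isomorphisms exist so the norm comparison goes through. Faithfulness on positive elements then follows because the diagonal expectation $\Theta$ of Proposition~\ref{prp:expectation} factors through this map, and the proof concludes via Lemma~\ref{lem:usual argument}.
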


For our first couple of results, we continue to work in the generality of finitely
aligned $k$-graphs. We begin by showing that cofinality of $\Lambda$ is necessary for
simplicity of $C^*(\Lambda, c)$.

\begin{lem}\label{lem:cofinality necessary}
Let $\Lambda$ be a finitely aligned $k$-graph, and suppose that $c \in \Zcat{2}(\Lambda,
\TT)$. Suppose that $\Lambda$ is not cofinal. Then $C^*(\Lambda, c)$ is not simple.
\end{lem}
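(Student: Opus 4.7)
The plan is to invoke the classification of gauge-invariant ideals in Theorem~\ref{thm:g-i ideals}: any nonempty proper hereditary $\FE(\Lambda)$-saturated subset $H \subsetneq \Lambda^0$, paired with $\Bb = \FE(\Lambda \setminus \Lambda H)$, yields $(H, \Bb) \in \SHS{\FE(\Lambda)}$ and hence a nonzero proper gauge-invariant ideal $I_{H, \Bb}$ of $C^*(\Lambda, c) = C^*(\Lambda, c; \FE(\Lambda))$. So it suffices to construct such an $H$ from the failure of cofinality.

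Fix $v_0, w_0 \in \Lambda^0$ witnessing non-cofinality; that is, no finite exhaustive subset $E \subseteq v_0 \Lambda$ (including the singleton $\{v_0\}$) satisfies $w_0 \Lambda s(\alpha) \neq \emptyset$ for every $\alpha \in E$. I would define
\[
H := s(w_0\Lambda) \cup \{u \in \Lambda^0 : \text{there exists } E \in u\FE(\Lambda) \text{ with } s(E) \subseteq s(w_0\Lambda)\}.
\]
Since $w_0$, viewed as a vertex path, lies in $s(w_0\Lambda)$, we have $w_0 \in H$; and non-cofinality rules out both $v_0 \in s(w_0\Lambda)$ and the existence of any $E \in v_0\FE(\Lambda)$ with $s(E) \subseteq s(w_0\Lambda)$, so $v_0 \notin H$. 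Hence $\emptyset \neq H \subsetneq \Lambda^0$.

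The main technical step is to verify that $H$ is hereditary and $\FE(\Lambda)$-saturated. For heredity, given $u \in H$ (witnessed by some $E$) and $\mu \in u\Lambda$, I would split on whether $\mu \in E\Lambda$: if $\mu = \alpha\beta$ with $\alpha \in E$, concatenating any path in $w_0\Lambda s(\alpha)$ with $\beta$ places $s(\mu) \in s(w_0\Lambda)$; otherwise (S2) applied to $\FE(\Lambda)$ (which is itself satiated) yields $\Ext(\mu; E) \in s(\mu)\FE(\Lambda)$, and each of its sources lies in $s(w_0\Lambda)$ by the same concatenation trick. For saturation, given $F \in \FE(\Lambda)$ with $s(F) \subseteq H$, for each $\alpha \in F$ pick a witness $E_\alpha \in s(\alpha)\FE(\Lambda)$ with $s(E_\alpha) \subseteq s(w_0\Lambda)$ (or take $E_\alpha = \{s(\alpha)\}$ if $s(\alpha) \in s(w_0\Lambda)$), and form $F' := \bigcup_{\alpha \in F} \alpha E_\alpha$. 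A short computation, essentially the one establishing (S4)-type closure of $\FE(\Lambda)$ in \cite[Lemma~5.3]{Sims:IUMJ06}, shows $F' \in \FE(\Lambda)$, $r(F') = r(F)$, and $s(F') \subseteq s(w_0\Lambda)$, so $r(F) \in H$. The subtle point is exhaustiveness of $F'$: given $\lambda \in r(F)\Lambda$, use exhaustiveness of $F$ to find $\alpha \in F$ and $\mu = \alpha\mu' \in \MCE(\lambda, \alpha)$, then use exhaustiveness of $E_\alpha$ to find $\beta \in E_\alpha$ with $\MCE(\mu', \beta) \neq \emptyset$, yielding a common extension of $\lambda$ and $\alpha\beta \in F'$.

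With $H$ established as proper, nonempty, hereditary, and $\FE(\Lambda)$-saturated, set $\Bb := \FE(\Lambda \setminus \Lambda H)$. Theorem~\ref{thm:quotient map} identifies $H_{I_{H,\Bb}} = H$, so $s^c(v_0) \not\in I_{H, \Bb}$, making the ideal proper; meanwhile $s^c(w_0) \in I_{H, \Bb}$ is nonzero by Proposition~\ref{prp:nonzero elements}, making the ideal nonzero. Hence $C^*(\Lambda, c)$ has a nontrivial gauge-invariant ideal and is not simple. The only genuine obstacle is the $\FE(\Lambda)$-saturation of $H$; every other ingredient is bookkeeping against the ideal structure developed in Section~\ref{sec:ideals}.
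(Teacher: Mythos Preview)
Your argument is correct and takes a genuinely different route from the paper. The paper works representation-theoretically: it invokes the characterisation of non-cofinality from \cite[Theorem~5.1]{LewinSims:MPCPS10} to obtain $v$ and an $\FE(\Lambda)$-compatible filter $S$ with $v\Lambda s(S) = \emptyset$, restricts the path-space representation $\pi_T$ to the invariant subspace $\Hh_S = \clsp\{h_\mu : s(\mu) \in s(S)\}$, and then uses Proposition~\ref{prp:->0 on filters} to see that this restriction descends to a nonzero quotient of $C^*(\Lambda,c)$ that kills $s^c(v)$. Your approach is purely combinatorial: you build (essentially the $\FE(\Lambda)$-saturated hereditary closure of) $\{w_0\}$ directly from the definition of cofinality, check that it is proper using the witnessing pair $(v_0,w_0)$, and then read off a nontrivial gauge-invariant ideal from Theorem~\ref{thm:g-i ideals}. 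Your route avoids the filter machinery and the path-space representation entirely, and makes transparent that the result is independent of $c$ (since the lattice of gauge-invariant ideals is); the paper's route, on the other hand, fits its narrative of using filters and $\pi_T$ as the organising tools, and does not require the full ideal classification. One small streamlining: rather than assembling $F' = \bigcup_{\alpha\in F}\alpha E_\alpha$ in one step and arguing exhaustiveness by hand, you can simply iterate (S4) over those $\alpha\in F$ with $s(\alpha)\notin s(w_0\Lambda)$, since $\FE(\Lambda)$ is itself satiated; this yields $F'\in r(F)\FE(\Lambda)$ with $s(F')\subseteq s(w_0\Lambda)$ without any side computation.
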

\begin{proof}
By \cite[Theorem 5.1]{LewinSims:MPCPS10} there exists $v \in \Lambda^0$ and an
$\FE(\Lambda)$-compatible filter $S$ such that $v\Lambda s(\lambda) = \emptyset$ for all
$\lambda \in S$ (that is, $v\Lambda s(S) = \emptyset$). Consider
\[\textstyle
	\Hh_S := \clsp\{h_\mu : s(\mu) \in s(S)\}
		\subseteq \ell^2(\Lambda).
\]
Let $\{T_\lambda : \lambda \in \Lambda\}$ be the twisted Toeplitz-Cuntz-Krieger family on
$\ell^2(\Lambda)$ described just before Proposition \ref{prp:nonzero elements}, and let
$\pi_T$ be the associated representation of $\Tt C^*(\Lambda)$. For $\mu,\nu \in \Lambda$
and a basis element $h_\eta$ of $\Hh_S$ we have
\[
T_\mu T^*_\nu h_\eta =
	\begin{cases}
		c(\mu,\eta')\overline{c(\nu,\eta')} h_{\mu\eta'}
			&\text{ if $\eta = \nu\eta'$}\\
		0 &\text{ otherwise.}
	\end{cases}
\]
Since $s(\mu\eta') = s(\eta) \in s(S)$, it follows that $\Hh_S$ is an invariant subspace
of $\ell^2(\Lambda)$ for $\pi_T$. Hence restriction determines a representation
$\pi_T^{\Hh_S}$ of $\Tt C^*(\Lambda, c)$ on $\Hh_S$. We have $T_v h_\eta = 0$ for all
$\eta \in \Lambda s(S)$ because $v\Lambda s(S) = \emptyset$. Thus $s^c_\Tt(v) \in
\ker(\pi_T^{\Hh_S})$.

Let $J = J_{\FE(\Lambda)}$ be the ideal of $\Tt C^*(\Lambda, c)$ generated by
$\{\Delta(s^c_\Tt)^E : E \in \FE(\Lambda)\}$, so that $\Tt C^*(\Lambda, c)/J =
C^*(\Lambda, c)$. Proposition~\ref{prp:->0 on filters} implies that $\lim_{\mu \in S}
\pi_T(a) h_\mu=0$ for all $a \in J$. Since $\|\pi_T(s_\Tt^c(r(S))) h_\mu\| = \|h_{\mu}\|
= 1$ for all $\mu \in S$, we deduce that
\begin{equation} \label{eq:norm 1 quotient}
	\big\|\pi_T(s_\Tt^c(r(S)) - a)|_{\Hh_S}\big\| \ge 1\text{ for all $a \in J$}.
\end{equation}
Let $I := \pi_T^{\Hh_S}(J)$. Then $I$ is an ideal of $\pi_T^{\Hh_S}(\Tt C^*(\Lambda,
c))$. Let $q_I$ be the quotient map from $\pi_T^{\Hh_S}(\Tt C^*(\Lambda, c))$ to
$\pi_T^{\Hh_S}(\Tt C^*(\Lambda, c))/I$. Equation~\eqref{eq:norm 1 quotient} implies that
$\|q_I \circ \pi_T^{\Hh_S}(s_\Tt^c(r(S)))\| = 1$, and so $q_I \circ \pi_T^{\Hh_S}$
induces a nonzero homomorphism $\rho$ of $C^*(\Lambda, c)$. The preceding paragraph shows
that $q_I \circ \pi_T^{\Hh_S}(s_\Tt^c(v)) = 0$. Hence $\rho$ is neither faithful nor
trivial, and it follows that $C^*(\Lambda, c)$ is not simple.
\end{proof}

\begin{prop}\label{prp:expectation}
Let $\Lambda$ be a finitely aligned $k$-graph, and let $c \in \Zcat2(\Lambda, \TT)$.
There is a faithful conditional expectation
\[
\Theta : C^*(\Lambda, c) \to D^c_\Lambda : = \clsp\{s^c(\lambda) s^c(\lambda)^* : \lambda
\in \Lambda\}
\]
given by $\Theta(s^c(\mu) s^c(\nu)^*) = \delta_{\mu,\nu} s^c(\mu) s^c(\mu)^*$.
\end{prop}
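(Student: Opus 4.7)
The plan is to replicate, in the Cuntz--Krieger setting, the two-step construction of the expectation used in the proof of Coburn's theorem for $\Tt C^*(\Lambda,c)$. First I would average over the gauge action $\gamma^c$ to produce a faithful conditional expectation $\Phi^{\gamma^c} : C^*(\Lambda, c) \to C^*(\Lambda,c)^{\gamma^c}$ satisfying $\Phi^{\gamma^c}(s^c(\mu) s^c(\nu)^*) = \delta_{d(\mu),d(\nu)} s^c(\mu) s^c(\nu)^*$. Second, I would build a faithful conditional expectation $\Phi^c_D$ from the fixed-point algebra onto $D^c_\Lambda$ and then define $\Theta := \Phi^c_D \circ \Phi^{\gamma^c}$.

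For the second step, Lemma~\ref{lem:mx units} applied to the universal relative family $s^c$ with $\Ee = \FE(\Lambda)$ expresses the fixed-point algebra $C^*(\Lambda,c)^{\gamma^c}$ as the closure of the increasing union of the finite-dimensional algebras $M(s^c)_E$ (over finite $E = \Pi E$), each spanned by the matrix units $\Theta(s^c)^E_{\mu,\nu}$. Hence $C^*(\Lambda,c)^{\gamma^c}$ is AF. I would then verify that $D^c_\Lambda$ is exactly the canonical diagonal: the inclusion $D^c_\Lambda \subseteq \clsp\{\Theta(s^c)^E_{\mu,\mu}\}$ follows from the identity $s^c(\mu) s^c(\mu)^* = \sum_{\mu\mu' \in E} \Theta(s^c)^E_{\mu\mu', \mu\mu'}$ used in Lemma~\ref{lem:mx units}, and the reverse inclusion follows by expanding $\Theta(s^c)^E_{\mu,\mu} = s^c(\mu)\Delta(s^c)^{T(E;\mu)}s^c(\mu)^*$ as a polynomial in the commuting range projections $s^c(\lambda)s^c(\lambda)^*$ and invoking (TCK4) to see that products of range projections stay in $D^c_\Lambda$. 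Thus the standard diagonal conditional expectation on the AF algebra restricts at each level $E$ to $\Theta(s^c)^E_{\mu,\nu} \mapsto \delta_{\mu,\nu}\Theta(s^c)^E_{\mu,\mu}$; by compatibility the limit $\Phi^c_D$ exists, is faithful, and takes values in $D^c_\Lambda$.

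It remains to verify the formula $\Theta(s^c(\mu) s^c(\nu)^*) = \delta_{\mu,\nu} s^c(\mu)s^c(\mu)^*$. When $d(\mu) \neq d(\nu)$, $\Phi^{\gamma^c}$ already kills $s^c(\mu) s^c(\nu)^*$. When $\mu = \nu$, $s^c(\mu)s^c(\mu)^* \in D^c_\Lambda$ is fixed by both maps. The interesting case is $d(\mu) = d(\nu)$ with $\mu \neq \nu$: here I would choose any finite $E = \Pi E$ containing $\mu,\nu$ and apply~\eqref{eq:tmutnu=thetasum} to write
\[
s^c(\mu) s^c(\nu)^* = \sum_{\mu\alpha \in E} c(\mu,\alpha)\overline{c(\nu,\alpha)}\,\Theta(s^c)^E_{\mu\alpha,\nu\alpha}.
\]
Since $\mu \neq \nu$ and $d(\mu) = d(\nu)$, the factorisation property forces $\mu\alpha \neq \nu\alpha$ for every $\alpha$, so each matrix unit appearing is off-diagonal and $\Phi^c_D$ annihilates the sum. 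Finally, composition of two faithful conditional expectations is a faithful conditional expectation, completing the argument.

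The only real subtlety, and the step where I would concentrate the writing, is the identification $D^c_\Lambda = \clsp\{\Theta(s^c)^E_{\mu,\mu} : E = \Pi E,\, \mu \in E\}$ — in particular checking that each $\Theta(s^c)^E_{\mu,\mu}$ belongs to $D^c_\Lambda$, which uses the fact from Lemma~\ref{lem:TCK consequences} that the range projections $s^c(\lambda)s^c(\lambda)^*$ pairwise commute and that their products decompose via (TCK4) as sums of range projections. Everything else is a direct transcription of the Toeplitz-algebra Coburn argument already spelled out in the paper.
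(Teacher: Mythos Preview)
Your proposal is correct and follows essentially the same approach as the paper: average over the gauge action to land in the AF fixed-point algebra, then compose with the canonical diagonal expectation coming from the matrix-unit structure of Lemma~\ref{lem:mx units}, using~\eqref{eq:tmutnu=thetasum} to see that $s^c(\mu)s^c(\nu)^*$ with $d(\mu)=d(\nu)$ and $\mu\neq\nu$ is a combination of off-diagonal matrix units. Your write-up is in fact more detailed than the paper's on the identification of $D^c_\Lambda$ with the diagonal and on the factorisation-property argument that $\mu\alpha\neq\nu\alpha$, but the strategy is identical.
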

\begin{proof}
Averaging over the gauge action $\gamma$ gives a faithful conditional expectation $\Phi :
C^*(\Lambda, c) \to C^*(\Lambda, c)^\gamma = \clsp\{s^c(\mu) s^c(\nu)^* : d(\mu) =
d(\nu)\}$ such that $\Phi(s^c(\mu) s^c(\nu)^*) = \delta_{d(\mu), d(\nu)} s^c(\mu)
s^c(\nu)^*$. Lemma~\ref{lem:mx units} shows that $C^*(\Lambda, c)^\gamma$ can be written
as the closure of an increasing union of finite-dimensional subalgebras $M(s^c)_E$ with
nested diagonal subalgebras whose union is dense in $D^c_\Lambda$.
Equation~\eqref{eq:tmutnu=thetasum} shows that if $\mu \not= \nu$ then
$s^c(\mu)s^c(\nu)^*$ belongs to the span of the off-diagonal matrix units in $M(s^c)_E$
for sufficiently large $E$. So the canonical faithful expectation $\Psi$ from the AF
algebra $C^*(\Lambda, c)^\gamma$ onto its diagonal subalgebra satisfies
$\Theta(s^c(\mu)s^c(\nu)^*) = \delta_{\mu,\nu} s^c(\mu)s^c(\mu)^*$. Now $\Theta = \Psi
\circ \Phi$ is the desired expectation.
\end{proof}

We now restrict our attention to row-finite $k$-graphs with no sources. Recall that in
this setting the Cuntz-Krieger relations for $C^*(\Lambda, c)$ imply that
\begin{equation}\label{eq:rfns CK}
\sum_{\lambda \in v\Lambda^n} s^c(\lambda) s^c(\lambda)^*
    = s^c(v)\text{ for every $v \in \Lambda^0$ and $n \in \NN^k$.}
\end{equation}
It follows from this and~(TCK2) that
\begin{equation}\label{eq:rfnsCK consequence}
s^c(\lambda) s^c(\lambda)^*
	= \sum_{\mu \in \lambda\Lambda^n} s^c(\mu) s^c(\mu)^*\quad\text{ for all $\lambda \in
\Lambda$ and $n \in \NN^k$.}
\end{equation}

\begin{lem}\label{lem:MCE and sim}
Let $\Lambda$ be a row-finite $k$-graph with no sources. Suppose that $\mu \sim \nu$ in
$\Lambda$. Then $\MCE(\mu,\nu) = \mu\Lambda \cap \Lambda^{d(\mu) \vee d(\nu)} =
\nu\Lambda \cap \Lambda^{d(\mu) \vee d(\nu)}$.
\end{lem}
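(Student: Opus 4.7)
The plan is to use the definition $\MCE(\mu,\nu) = \mu\Lambda \cap \nu\Lambda \cap \Lambda^{d(\mu)\vee d(\nu)}$ together with the hypothesis $\mu\sim\nu$ to identify the three candidate sets. Since the intersections $\mu\Lambda \cap \Lambda^{d(\mu)\vee d(\nu)}$ and $\nu\Lambda \cap \Lambda^{d(\mu)\vee d(\nu)}$ both contain $\MCE(\mu,\nu)$ and are in turn contained in $\mu\Lambda\cap\nu\Lambda \cap \Lambda^{d(\mu)\vee d(\nu)}$ once we show they coincide, it suffices to establish the single inclusion
\[
\mu\Lambda \cap \Lambda^{d(\mu)\vee d(\nu)} \subseteq \nu\Lambda;
\]
symmetry then gives the reverse inclusion, and both sets will equal $\MCE(\mu,\nu)$.

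To obtain this inclusion, fix $\lambda \in \mu\Lambda \cap \Lambda^{d(\mu)\vee d(\nu)}$ and use the factorisation property to write $\lambda = \mu\alpha$ with $d(\alpha) = (d(\mu)\vee d(\nu)) - d(\mu)$. Since $\Lambda$ is row-finite with no sources, $s(\alpha) = s(\mu)\Lambda^\infty$ is non-empty (this is the standard construction of an infinite path through a prescribed initial segment, e.g.\ using a choice function at each vertex), so we may select an infinite path $x \in s(\mu)\Lambda^\infty$ whose initial segment of degree $d(\alpha)$ is $\alpha$. The hypothesis $\mu \sim \nu$ (which in particular forces $s(\mu) = s(\nu)$) gives $\mu x = \nu x$ as infinite paths.

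Now truncate both sides to degree $d(\mu)\vee d(\nu)$. On the left we get $(\mu x)(0, d(\mu)\vee d(\nu)) = \mu\cdot x(0, d(\alpha)) = \mu\alpha = \lambda$, and on the right we get $(\nu x)(0, d(\mu)\vee d(\nu)) = \nu \cdot x(0, (d(\mu)\vee d(\nu)) - d(\nu))$. Hence $\lambda \in \nu\Lambda$, proving the required inclusion.

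There is no real obstacle here: the argument is essentially a direct unpacking of the definition of $\sim$, combined with the ability to extend finite paths to infinite ones using the no-sources hypothesis. The only point requiring a touch of care is matching up the initial segments of $\mu x$ and $\nu x$ at degree $d(\mu)\vee d(\nu)$, which is handled cleanly by the factorisation property.
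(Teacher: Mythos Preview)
Your proof is correct and follows essentially the same approach as the paper's: both show the inclusion $\mu\Lambda \cap \Lambda^{d(\mu)\vee d(\nu)} \subseteq \nu\Lambda$ by extending $\alpha$ to an infinite object, applying $\mu \sim \nu$, and reading off the desired factorisation at degree $d(\mu)\vee d(\nu)$. The only difference is cosmetic: the paper phrases this using ultrafilters and the maps $\ell_\mu$, writing $\ell_{\mu\alpha}(S) = \ell_\mu(\ell_\alpha(S)) = \ell_\nu(\ell_\alpha(S)) = \ell_{\nu\alpha}(S)$ for an ultrafilter $S$ containing $s(\alpha)$, whereas you use infinite paths directly; these are equivalent via the bijection in Remark~\ref{rmk:filters vs paths}. (There is a small typo in your sentence ``$s(\alpha) = s(\mu)\Lambda^\infty$ is non-empty''; you mean that $s(\alpha)\Lambda^\infty$ is non-empty so that $\alpha$ extends to an infinite path in $s(\mu)\Lambda^\infty$.)
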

\begin{proof}
For any $\mu\alpha \in \mu\Lambda \cap \Lambda^{d(\mu) \vee d(\nu)}$ and any ultrafilter
$S$ with $s(\alpha) \in S$, we have $\ell_{\mu\alpha}(S) = \ell_\mu(\ell_\alpha(S)) =
\ell_\nu(\ell_\alpha(S)) = \ell_{\nu\alpha}(S)$, and so $\mu\alpha \in \nu\Lambda \cap
\Lambda^{d(\mu) \vee d(\nu)}$. So $\mu\Lambda \cap \Lambda^{d(\mu) \vee d(\nu)} \subseteq
\nu\Lambda \cap \Lambda^{d(\mu) \vee d(\nu)}$, and symmetry gives the reverse inclusion.
Since $\MCE(\mu,\nu) = \mu\Lambda \cap \nu\Lambda \cap \Lambda^{d(\mu) \vee d(\nu)}$, the
result follows.
\end{proof}

Recall from \cite[Theorem~4.2]{CarlsenKangEtAl:xx13} that if $\Lambda$ is cofinal, then
$\Per(\Lambda) = \{d(\mu) - d(\nu) : \mu\sim\nu\}$. Since $\Per(\Lambda)$ it is a
subgroup of $\ZZ^k$, it is isomorphic to $\ZZ^l$ for some $l \le k$. The collection
\begin{align*}
H_{\Per} = \{ v \in \Lambda^0 : \text{ whenever } m,n \in \NN^k&, m-n \in \Per(\Lambda) \text{ and } \lambda \in v\Lambda^m \text{ there } \\
& \text{exists } \mu \in v\Lambda^n \text{ such that } \lambda \sim \mu\}
\end{align*}
is a nonempty hereditary subset of $\Lambda^0$. For $m,n \in \NN^k$ such that $m - n \in
\Per(\Lambda)$, there is a bijection $\theta_{m,n} : \Hper \Lambda^m \to \Hper\Lambda^n$
such that $\mu \sim \theta_{m,n}(\mu)$ for all $\mu$; this $\theta_{m,n}$ preserves the
range and source maps.

Suppose that $\mu \sim \nu \in \Lambda$, and write $m = d(\mu)$ and $n = d(\nu)$. Let $t$
be a Cuntz-Krieger $(\Lambda, c)$-family. Then~\eqref{eq:rfnsCK consequence} implies that
$t_\mu t^*_\mu = \sum_{\eta \in \mu\Lambda^{(m \vee n) - m}} t_\eta t^*_\eta$, and then
Lemma~\ref{lem:MCE and sim} implies that $t_\mu t^*_\mu = t_\mu t^*_\mu t_\nu t^*_\nu$;
that is, $t_\mu t^*_\mu \le t_\nu t^*_\nu$. Symmetry gives the reverse inequality, and
hence
\begin{equation}\label{eq:sim->range projections}
t_\mu t^*_\mu = t_\nu t^*_\nu\qquad\text{ whenever $\mu \sim \nu$.}
\end{equation}

For the following proposition, we remind the reader that $A(c)$ denotes the
noncommutative torus, described at the beginning of this section.

\begin{prop}\label{prp:rotation iso}
Let $\Lambda$ be a row-finite $k$-graph with no sources. Let $c \in Z^2(\ZZ^k, \TT)$, and
suppose that $(cc^*)|_{\Per(\Lambda)}$ is nondegenerate. Suppose that $t$ is a
Cuntz-Krieger $(\Lambda, c \circ d)$-family such that each $t_v \not= 0$. Suppose that
$F$ is a finite subset of $\Hper\Lambda^n$, and let $\widetilde{F} := \{\nu \in \Lambda :
\nu \sim \mu\text{ for some } \mu \in F\}$. There is an injective homomorphism
$\bigoplus_{\lambda \in F} \rho^\lambda_t : \bigoplus_{\lambda \in F}
A(c|_{\Per(\Lambda)}) \to C^*(t)$ such that $\rho^\lambda_t(U_{d(\lambda)- d(\nu)}) =
t_\lambda t^*_\nu$ whenever $\lambda \in F$ and $\nu \sim \lambda$.
\end{prop}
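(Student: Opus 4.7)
The plan is to define each $\rho^\lambda_t$ on the unitary generators $U_m$ of $A(c|_{\Per(\Lambda)})$ via a sum formula, verify the noncommutative torus relations using the $\sim$-equivalence and the Cuntz--Krieger relations, and then deduce injectivity of the direct sum from simplicity.

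For each $\lambda \in F \subseteq \Hper\Lambda^n$ and $m \in \Per(\Lambda)$, I would choose $a, b \in \NN^k$ with $a - b = m$. Hereditariness of $\Hper$ gives $\lambda\Lambda^a \subseteq \Hper\Lambda^{n+a}$, so the bijection $\theta := \theta_{n+a, n+b}$ produces an equivalent path $\theta(\xi) \in \Hper\Lambda^{n+b}$ for every $\xi \in \lambda\Lambda^a$. Since $\sim$ is preserved by left-concatenation and $\theta$ is characterised by uniqueness, $\theta$ restricts to a bijection $\lambda\Lambda^a \to \lambda\Lambda^b$. I would set
\[
\rho^\lambda_t(U_m) := \kappa(a, b) \sum_{\xi \in \lambda\Lambda^a} t_\xi\, t^*_{\theta(\xi)},
\]
with $\kappa(a, b) \in \TT$ a scalar built from values of $c$ at $n, a, b$, tuned so that (i) the expression is independent of the choice of $(a, b)$ and (ii) in the case $m = d(\lambda) - d(\nu)$ for $\nu \sim \lambda$, the formula reduces to $t_\lambda t^*_\nu$. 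Shift-independence under $(a, b) \mapsto (a+e_i, b+e_i)$ reduces, via the decomposition $\xi = \xi_0\eta$ and $\sum_\eta t_\eta t^*_\eta = t_{s(\xi_0)}$ from \eqref{eq:rfnsCK consequence}, to the requirement $\kappa(a+e_i, b+e_i) = \kappa(a, b) c(n+a, e_i) \overline{c(n+b, e_i)}$; an explicit $\kappa$ satisfying this is assembled from the 2-cocycle identity, and the bookkeeping is cleanest when working with a bicharacter representative of the cohomology class of $c$.

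To verify the defining relations of $A(c|_{\Per(\Lambda)})$, I would first use \eqref{eq:sim->range projections} and the orthogonality of $\{t_\xi t^*_\xi : \xi \in \lambda\Lambda^a\}$ from Lemma~\ref{lem:TCK consequences} to get $\rho^\lambda_t(U_m)^*\rho^\lambda_t(U_m) = \rho^\lambda_t(U_m)\rho^\lambda_t(U_m)^* = p_\lambda := t_\lambda t^*_\lambda$, so each image is a unitary in $p_\lambda C^*(t) p_\lambda$. For multiplicativity, I would compute $\rho^\lambda_t(U_m)\rho^\lambda_t(U_{m'})$ using the decompositions $(a, b)$ and $(b, b-m')$, so that $t^*_{\theta(\xi)}t_{\xi'} = \delta_{\theta(\xi),\xi'}\,t_{s(\xi)}$ (both sides have degree $n+b$) and the $\theta$-bijections compose as $\theta_{n+b, n+b-m'}\circ \theta_{n+a, n+b} = \theta_{n+a, n+b-m'}$ by uniqueness. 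The product telescopes to the sum defining $\rho^\lambda_t(U_{m+m'})$, up to the scalar $\kappa(a,b)\kappa(b, b-m')/\kappa(a, b-m')$, which the cocycle identity forces to equal $c(m, m')$. The universal property of $A(c|_{\Per(\Lambda)})$ then produces the required $\rho^\lambda_t$.

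Finally, since $F \subseteq \Lambda^n$, Lemma~\ref{lem:TCK consequences} gives that $\{p_\lambda : \lambda \in F\}$ is pairwise orthogonal, so each $\rho^\lambda_t$ maps into the corner $p_\lambda C^*(t) p_\lambda$, placing the images in pairwise orthogonal subalgebras and making $\bigoplus_\lambda \rho^\lambda_t$ a well-defined $^*$-homomorphism. Simplicity of the noncommutative torus $A(c|_{\Per(\Lambda)})$ (from nondegeneracy of $(cc^*)|_{\Per(\Lambda)}$, as recalled in the excerpt) forces each nonzero $\rho^\lambda_t$ to be injective; and $\rho^\lambda_t(1) = p_\lambda \neq 0$ since $t^*_\lambda t_\lambda = t_{s(\lambda)} \neq 0$. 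Orthogonality of the corners propagates injectivity to the direct sum. The hardest step will be pinning down the scalar $\kappa(a, b)$ so that both shift-invariance and the $c(m, m')$-twisted multiplicativity hold simultaneously: this is the main cocycle-bookkeeping task.
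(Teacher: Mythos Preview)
Your overall strategy is correct and closely parallels the paper's: define unitaries $V_m$ in the corner $t_\lambda t^*_\lambda C^*(t) t_\lambda t^*_\lambda$ via a $\theta$-sum formula, check they generate a copy of the noncommutative torus, and invoke simplicity of $A(c|_{\Per(\Lambda)})$ for injectivity. The orthogonality argument for distinct $\lambda \in F$ and the reduction to a bicharacter representative are exactly as in the paper.

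The main difference is in how you establish the torus relations. You attempt to verify the full twisted-multiplication law $\rho^\lambda_t(U_m)\rho^\lambda_t(U_{m'}) = c(m,m')\rho^\lambda_t(U_{m+m'})$, which forces you to introduce and pin down the normalising scalar $\kappa(a,b)$ so that shift-invariance and multiplicativity hold simultaneously. The paper avoids this entirely: it makes one fixed choice of decomposition $m = p(m) - q(m)$, does \emph{not} claim independence of that choice, and computes only the commutation relation $V_m V_{m'} = (cc^*)(m,m')\, V_{m'}V_m$. Since each $V_m$ is then a scalar multiple of a word in the $V_{g_i}$ (where $g_1,\dots,g_l$ generate $\Per(\Lambda)$), the universal property of $A(c|_{\Per(\Lambda)})$ in its commutation presentation gives the map, and the specific value $V_{n-d(\nu)} = t_\lambda t^*_\nu$ is read off from the fixed choice $p(m)=n$ when $m\le n$. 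This buys the paper freedom from the $\kappa$-bookkeeping you flag as the hardest step; conversely, your approach would, if completed, give an explicit formula for $\rho^\lambda_t(U_m)$ for every $m$ rather than only up to a scalar. Both routes are valid, but the paper's is shorter precisely because the commutation relation already determines $A(c|_{\Per(\Lambda)})$ up to isomorphism. One small point to watch in your version: your choice $(a',b')=(b,b-m')$ requires $b - m' \in \NN^k$, so you will need to shift first to arrange this, and similarly the verification that $\rho^\lambda_t(U_{d(\lambda)-d(\nu)}) = t_\lambda t^*_\nu$ needs care when $d(\nu)\not\ge n$.
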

\begin{proof}
As discussed at the beginning of the section, we may assume that $c$ is a bicharacter.

If $\mu,\nu \in \widetilde{F}$ and $\mu\not\sim\nu$, then there exist distinct
$\lambda,\lambda' \in F$ such that $\mu \sim \lambda$ and $\nu\sim\lambda'$.
Equation~\eqref{eq:sim->range projections} gives $t_\mu t^*_\mu = t_\lambda t^*_\lambda
\perp t_{\lambda'} t^*_{\lambda'} = t_\nu t^*_\nu$. Hence
\[
C^*(\{t_\mu t^*_\nu : \mu,\nu \in \widetilde{F}\text{ and } \mu \sim \nu\})
	= \bigoplus_{\lambda \in F} C^*(\{t_\mu t^*_\nu : \mu \sim \lambda \sim \nu\}).
\]
Since each $t_v$ is nonzero, so is each $t_\mu t^*_\nu$, and so each summand $C^*(\{t_\mu
t^*_\nu : \mu \sim \lambda \sim \nu\})$ is nontrivial.

Fix $\lambda \in F$. It now suffices to show that there is an injective homomorphism
$\rho_s : A(c|_{\Per(\Lambda)}) \to t_\lambda t^*_\lambda C^*(t) t_\lambda t^*_\lambda$
such that $\rho_s(U_{d(\mu) - d(\nu)}) = t_\mu t^*_\nu$ whenever $\mu \sim \lambda \sim
\nu$. To see this, for each $m \in \Per(\Lambda)$, express $m = p(m) - q(m)$ where $p(m),
q(m) \in \NN^k$ and $p(m) = n$ whenever $m \le n$. Since $r(\lambda) \in \Hper$, we can
define
\begin{equation}\label{eq:Vm}
V_m := \sum_{\mu \in r(\lambda)\Lambda^{p(m)}} t_\lambda t^*_\lambda t_\mu
t^*_{\theta_{p(m),q(m)}(\mu)}.
\end{equation}
We have
\[
V_m V^*_m = \sum_{\mu,\nu \in r(\lambda)\Lambda^{p(m)}} t_\lambda t^*_\lambda t_\mu
t^*_{\theta_{p(m),q(m)}(\mu)}
	t_{\theta_{p(m),q(m)}(\nu)} t^*_\nu t_\lambda t^*_\lambda.
\]
Each $d(\theta_{p(m),q(m)}(\mu)) = q(m) = d(\theta_{p(m),q(m)}(\nu))$ and
$\theta_{p(m),q(m)}$ is a bijection, so the Cuntz-Krieger relation~\eqref{eq:rfns CK}
gives
\[
V_m V_m^* = \sum_{\mu \in r(\lambda)\Lambda^p} t_\lambda t^*_\lambda t_\mu t^*_\mu
t_\lambda t^*_\lambda,
\]
and then~\eqref{eq:rfns CK} again reduces this to $t_\lambda t^*_\lambda$.

To see that $V_m^* V_m$ is also equal to $t_\lambda t^*_\lambda$, observe that
\begin{align*}
V^*_m V_m
	&= \sum_{\mu,\nu \in r(\lambda)\Lambda^{p(m)}} t_{\theta_{p(m),q(m)}(\mu)} t^*_\mu t_\lambda t^*_\lambda t_\nu t^*_{\theta_{p(m),q(m)}(\nu)}\\
	&= \sum_{\substack{\mu,\nu \in r(\lambda)\Lambda^{p(m)}\\ \eta \in
\lambda\Lambda^{p(m)}}}
		 t_{\theta_{p(m),q(m)}(\mu)} t^*_\mu t_\eta t^*_\eta t_\nu
t^*_{\theta_{p(m),q(m)}(\nu)}.
\end{align*}
For each $\eta \in \lambda\Lambda^{p(m)}$, factorise $\eta = \alpha_\eta\beta_\eta$ with
$d(\alpha_\eta) = p(m)$ and $d(\beta_\eta) = n$. For $\mu \in r(\lambda)\Lambda^{p(m)}$
and $\eta \in \lambda \Lambda^{p(m)}$ we have $d(\mu) = p(m) = d(\alpha_\eta)$, so the
first displayed equation in Lemma~\ref{lem:TCK consequences} implies that $t^*_\mu
t_{\alpha_\eta} = \delta_{\mu, \alpha_\eta} t_{s(\mu)}$. Thus
\[
V^*_m V_m
	= \sum_{\eta \in \lambda\Lambda^{p(m)}} t_{\theta_{p(m),q(m)}(\alpha_\eta)}
t_{\beta_\eta}
		t^*_{\beta_\eta} t^*_{\theta_{p(m),q(m)}(\alpha_\eta)}.
\]
If $\mu \sim \nu$ then the factorisation property gives $\mu\tau \sim \nu\tau$ for all
$\tau$, and so $\theta_{m + p, n + p}(\mu\tau) = \theta_{m,n}(\mu)\tau$ whenever $\mu \in
\Lambda^m$ and $\tau \in s(\mu)\Lambda^p$. This and~\eqref{eq:sim->range projections}
give
\begin{equation}\label{unitary_SP_formula}
\begin{split}
V^*_m V_m
	&= \sum_{\eta \in \lambda\Lambda^{p(m)}} t_{\theta_{p(m)+n,q(m)+n}(\eta)} t^*_{\theta_{p(m)+n, q(m)+n}(\eta)} \\
	&= \sum_{\eta \in \lambda\Lambda^{p(m)}} t_\eta t^*_\eta
	= t_\lambda t^*_\lambda.
\end{split}
\end{equation}
Hence the $V_m$ are unitaries in $t_\lambda t^*_\lambda C^*(t) t_\lambda t^*_\lambda$.

Combining \eqref{eq:Vm} and \eqref{unitary_SP_formula} we now have
\begin{align*}
\sum_{\mu \in r(\lambda)\Lambda^{p(m)}} t_\lambda t^*_\lambda t_\mu
t^*_{\theta_{p(m),q(m)}(\mu)}
	&= V_m = V_m t_\lambda t^*_\lambda\\
    &= \sum_{\mu \in r(\lambda)\Lambda^{p(m)}}
t_\lambda t^*_\lambda t_\mu t^*_{\theta_{p(m),q(m)}(\mu)} t_\lambda t^*_\lambda,
\end{align*}
and then symmetry gives
\[
V_m =  \sum_{\mu \in r(\lambda)\Lambda^{p(m)}} t_\mu t^*_{\theta_{p(m),q(m)}(\mu)}
t_\lambda t^*_\lambda.
\]

Fix $m, m'$ in $\Per(\Lambda)$, and write $p := p(m)$, $p' := p(m')$, $q := q(m)$ and $q'
:= q(m')$. For the next few calculations, put
\[
C_{m,m'} := c(p,p')\overline{c(q,p')}c(p', q)\overline{c(q',q)}.
\]
Then relation~\eqref{eq:rfns CK} gives
\begin{align*}
V_m V_{m'}
	&= \sum_{\mu \in r(\lambda)\Lambda^p} t_\lambda t^*_\lambda t_\mu
t^*_{\theta_{p,q}(\mu)}
		\sum_{\eta \in r(\lambda)\Lambda^{p'}} t_\eta t^*_{\theta_{p',q'}(\eta)} t_\lambda t^*_\lambda \\
	&=  \sum_{\substack{\mu \in r(\lambda)\Lambda^p,\;
						\mu' \in s(\mu)\Lambda^{p'}\\
						\eta \in r(\lambda)\Lambda^{p'},\;
						\eta' \in s(\eta)\Lambda^q}}
			C_{m,m'}
			t_\lambda t^*_\lambda t_{\mu\mu'} t^*_{\theta_{p,q}(\mu)\mu'}
			t_{\eta\eta'} t^*_{\theta_{p',q'}(\eta)\eta'} t_\lambda t^*_\lambda\\
	&= C_{m,m'}
		\sum_{\substack{\mu \in r(\lambda)\Lambda^p,\;
							\mu' \in s(\mu)\Lambda^{p'}\\
							\eta \in r(\lambda)\Lambda^{p'},\;
							\eta' \in s(\eta)\Lambda^q}}
				\delta_{\theta_{p,q}(\mu)\mu', \eta\eta'}
				t_\lambda t^*_\lambda t_{\mu\mu'} t^*_{\theta_{p',q'}(\eta)\eta'}
t_\lambda t^*_\lambda.
\end{align*}
If $\theta_{p,q}(\mu)\mu' = \eta\eta'$, then $\mu\mu' \sim \theta_{p,q}(\mu)\mu' =
\eta\eta' \sim \theta_{p',q'}(\eta)\eta'$; and conversely $\mu\mu' \sim
\theta_{p',q'}(\eta)\eta'$ implies $\theta_{p,q}(\mu)\mu' = \eta\eta'$ by a symmetric
argument. So the final line of the preceding displayed equation becomes
\begin{equation}\label{eq:convert to a+b}
C_{m,m'}
		\sum_{\substack{\mu \in r(\lambda)\Lambda^p,\;
							\mu' \in s(\mu)\Lambda^{p'}\\
							\zeta \in r(\lambda)\Lambda^{q'},\;
							\zeta' \in s(\eta)\Lambda^q\\
							\mu\mu' \sim \zeta\zeta'}}
				t_\lambda t^*_\lambda t_{\mu\mu'} t^*_{\zeta\zeta'} t_\lambda
t^*_\lambda.
\end{equation}
Let $a := p(m + m')$ and $b := q(m+m')$. Then there are $h,l \in \NN^k$ such that $p + p'
+ h = a+l$ and $q + q' + h = b + l$. For $\alpha \in \Lambda^{p+p'}$ and $\beta \in
\Lambda^{q + q'}$, we have $\alpha \sim \beta$ if and only if there is an injection $\tau
\mapsto (\eta(\tau), \rho(\tau), \zeta(\tau))$ from $s(\alpha)\Lambda^h$ to
$r(\alpha)\Lambda^a \times \Lambda^l s(\alpha) \times r(\alpha)\Lambda^b$ such that
$\alpha\tau = \eta(\tau)\rho(\tau)$ and $\beta\tau = \zeta(\tau) \rho(\tau)$ and
$\eta(\tau) \sim \zeta(\tau)$ for each $\tau$. Using this, and applying
relation~\eqref{eq:rfns CK}, we obtain
\begin{align*}
\sum_{\substack{\mu \in r(\lambda)\Lambda^p,\;
							\mu' \in s(\mu)\Lambda^{p'}\\
							\zeta \in r(\lambda)\Lambda^{q'},\;
							\zeta' \in s(\eta)\Lambda^q\\
							\mu\mu' \sim \zeta\zeta',}}&
				t_{\mu\mu'} t^*_{\zeta\zeta'}\\
	&= \sum_{\substack{\mu \in r(\lambda)\Lambda^p,\;
								\mu' \in s(\mu)\Lambda^{p'}\\
								\zeta \in r(\lambda)\Lambda^{q'},\;
								\zeta' \in s(\eta)\Lambda^q\\
								\mu\mu' \sim \zeta\zeta',\;
								\tau \in s(\mu')\Lambda^h}}
					 c(p+p',h)\overline{c(q+q',h)} t_{\mu\mu'\tau} t^*_{\zeta\zeta'\tau}\\
	&= \sum_{\substack{\mu \in r(\lambda)\Lambda^p,\;
								\mu' \in s(\mu)\Lambda^{p'}\\
								\zeta \in r(\lambda)\Lambda^{q'},\;
								\zeta' \in s(\eta)\Lambda^q\\
								\mu\mu' \sim \zeta\zeta',\;
								\tau \in s(\mu')\Lambda^h}}
					 c(m+m',h) t_{\mu\mu'\tau} t^*_{\zeta\zeta'\tau}.		
\end{align*}
Hence the expression~\eqref{eq:convert to a+b} for $V_m V_m^*$ gives
\begin{align*}
V_m V_{m'}
	&= C_{m,m'} c(m+m',h)
		\sum_{\eta \in r(\lambda)\Lambda^a\!,\, \rho \in s(\eta)\Lambda^l}
				t_\lambda t^*_\lambda t_{\eta\rho} t^*_{\theta_{a,b}(\eta)\rho} t_\lambda t^*_\lambda \\
	&= C_{m,m'}  c(m+m',h) \overline{c(a,l)}c(b,l) \\
	&\phantom{C_{m,m'}}
			\sum_{\eta \in r(\lambda)\Lambda^a\!,\, \rho \in s(\eta)\Lambda^l}
					t_\lambda t^*_\lambda t_{\alpha'} t_\tau t^*_\tau t^*_{\theta_{a,b}(\alpha')} t_\lambda t^*_\lambda \\
	&= c(p,p')\overline{c(q,p')}c(p',q)\overline{c(q',q)} c(m+m',h-l) V_{m + m'}.
\end{align*}
Rearranging this expression for $V_m V_{m'}$ and the symmetric expression
\[
V_{m'} V_m
	=  c(p',p)\overline{c(q',p)}c(p, q')\overline{c(q,q')} c(m+m',h-l) V_{m + m'},
\]
and cancelling the occurrences of $c(m+m',h-l)$, we obtain
\begin{equation}\label{eq:commutation}
\begin{split}
V_m V_{m'}
	= c(p,p')&\overline{c(q,p')} \overline{c(p,q')}c(q,q')\\
        &\overline{c(p',p)} c(p',q) \overline{c(q',q)} c(q',p) V_{m'}V_m.
\end{split}
\end{equation}
Using repeatedly that $c$ is a bicharacter, we calculate:
\begin{align*}
c(p,p')\overline{c(q,p')}&
		\overline{c(p,q')}c(q,q') \overline{c(p',p)} c(p',q) \overline{c(q',q)} c(q',p)\\
	&= c(p-q, p') c(-p+q, q') c(p', -p+q) c(q', p-q) \\
	&= c(p-q, p') c(p-q, -q') \overline{c(p', p-q)c(-q',p-q)}\\
	&= c(m, m')\overline{c(m',m)}.
\end{align*}
Thus~\eqref{eq:commutation} becomes
\[
V_m V_{m'} = c(m, m')\overline{c(m',m)} V_{m'}V_m.
\]
Choose generators $g_1, \dots, g_l$ for $\Per(\Lambda)$. Then each $V_m$ is a scalar
multiple of a product of the $V_{g_i}$, and so $C^*(\{V_m : m \in \Per(\Lambda)\})$ is
generated by the $V_{g_i}$, which satisfy $V_{g_i} V_{g_j} = (cc^*)(g_i, g_j) V_{g_j}
V_{g_i}$. Since $c_{\Per} := c|_{\Per(\Lambda)}$ is nondegenerate,
\cite[Theorem~3.7]{Slawny:CMP72} implies that there is an isomorphism $\rho^\lambda_t :
A(c_{\Per}) \to C^*(\{V_m : m \in \Per(\Lambda)\})$ carrying each $U_m$ to $V_m$.

Recall that we chose $p(m) = n$ whenever $m \le n$. So if $\lambda \sim \nu$ then
\[
V_{n - d(\nu)}
	= t_\lambda t^*_\lambda \sum_{\mu \in r(\lambda)\Lambda^n} t_\mu t^*_{\theta_{n,
d(\nu)}(\mu)}
	= t_\lambda t^*_{\theta_{n, d(\nu)}(\lambda)}
	= t_\lambda t^*_\nu.
\]
So $\rho^\lambda_t$ carries $U_{d(\lambda)- d(\nu)}$ to $t_\lambda t^*_\nu$ whenever $\nu
\sim \lambda$. Hence $\bigoplus_{\lambda \in F} \rho^\lambda_t : \bigoplus_{\lambda \in
F} A(c_{\Per}) \to C^*(t)$ is the desired homomorphism.
\end{proof}

\begin{lem}\label{eq:sim vs MCE}
Let $\Lambda$ be a row-finite $k$-graph with no sources. Suppose that $\mu,\nu \in
\Lambda$ satisfy $s(\mu) = s(\nu)$. Then $\mu \sim \nu$ if and only if
$\MCE(\mu\tau,\nu\tau) \not= \emptyset$ for all $\tau \in s(\mu)\Lambda$.
\end{lem}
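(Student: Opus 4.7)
The plan is to prove both directions by reducing everything to initial segments of infinite paths and exploiting the factorisation property; throughout I will identify ultrafilters with infinite paths via Remark~\ref{rmk:filters vs paths}.

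For the forward direction, assume $\mu \sim \nu$. The key observation is that $\mu\tau \sim \nu\tau$ for every $\tau \in s(\mu)\Lambda$: given any $y \in s(\tau)\Lambda^\infty$, the path $\tau y$ lies in $s(\mu)\Lambda^\infty$, so $(\mu\tau)y = \mu(\tau y) = \nu(\tau y) = (\nu\tau)y$. Note that $\mu \sim \nu$ automatically forces $r(\mu) = r(\nu)$, since the infinite paths $\mu x = \nu x$ must have a common range (and $\Lambda$ has no sources, so such $x$ exists). Lemma~\ref{lem:MCE and sim} then expresses $\MCE(\mu\tau,\nu\tau)$ as $\mu\tau\Lambda \cap \Lambda^{d(\mu\tau) \vee d(\nu\tau)}$, which is nonempty because $\Lambda$ has no sources.

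For the converse, I will argue contrapositively. Suppose $\mu \not\sim \nu$, pick $x \in s(\mu)\Lambda^\infty$ with $\mu x \neq \nu x$, and choose $n \geq d(\mu) \vee d(\nu)$ with $(\mu x)(0, n) \neq (\nu x)(0, n)$ (such an $n$ exists: otherwise $\mu x$ and $\nu x$ would agree at every level above $d(\mu) \vee d(\nu)$, and hence at every level by factorisation). Set $\tau := x(0, n - d(\mu) \wedge d(\nu))$; I claim $\MCE(\mu\tau,\nu\tau) = \emptyset$. Indeed, any hypothetical common extension $\mu\tau\gamma = \nu\tau\delta$ would, after projecting to the initial segment of degree $n$ via the factorisation property, yield
\[
\mu \cdot x(0, n - d(\mu))
 = (\mu\tau\gamma)(0,n)
 = (\nu\tau\delta)(0,n)
 = \nu \cdot x(0, n - d(\nu)),
\]
i.e.\ $(\mu x)(0, n) = (\nu x)(0, n)$, contradicting the choice of $n$.

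The only bookkeeping is ensuring that $d(\tau) = n - d(\mu) \wedge d(\nu)$ is large enough for $\tau(0, n - d(\mu))$ to equal $x(0, n - d(\mu))$ and similarly for $\nu$; this reduces to the tautologies $d(\mu) \wedge d(\nu) \leq d(\mu)$ and $d(\mu) \wedge d(\nu) \leq d(\nu)$. Conceptually, the main point is simply to pick $\tau$ as an initial segment of $x$ long enough that $\mu\tau$ and $\nu\tau$ already extend the differing initial segments $(\mu x)(0,n)$ and $(\nu x)(0,n)$, so no common extension can exist.
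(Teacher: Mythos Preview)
Your proof is correct, and for the converse direction it takes a genuinely different route from the paper. The paper proves the converse directly rather than contrapositively: assuming $\MCE(\mu\tau,\nu\tau)\neq\emptyset$ for all $\tau$, it puts the compact-open topology on the space of ultrafilters via the cylinder sets $Z(\lambda)$, observes that the sets $Z(\mu\tau)\cap Z(\nu\tau)$ for $\tau\in S$ form a nested family of nonempty compact sets, and extracts a common point $T$ from their intersection, which is then identified with both $\ell_\mu(S)$ and $\ell_\nu(S)$. Your argument is more elementary and constructive: from a single witness $x$ to $\mu\not\sim\nu$ you manufacture a specific $\tau$ with $\MCE(\mu\tau,\nu\tau)=\emptyset$, using only the factorisation property and no topology or compactness. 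The trade-off is that the paper's compactness argument is a standard pattern that reappears elsewhere in the theory, while yours is self-contained and avoids importing the path-space topology for this lemma. For the forward direction the two proofs are essentially the same idea; you package it by first establishing $\mu\tau\sim\nu\tau$ and then citing Lemma~\ref{lem:MCE and sim}, whereas the paper works directly with the ultrafilter $\ell_\mu(S)$.
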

\begin{proof}
First suppose that $\mu \sim \nu$, and fix $\tau \in s(\mu)\Lambda$. There is an
ultrafilter $S$ with $\tau \in S$, and we have $\ell_\mu(S) = \ell_\nu(S)$. Since $S$ is
an ultrafilter, so is $\ell_\mu(S)$, and so $\ell_\mu(S) \cap \Lambda^{d(\mu\tau) \vee
d(\nu\tau)} \not= \emptyset$, and then the unique element of $\ell_\mu(S) \cap
\Lambda^{d(\mu\tau) \vee d(\nu\tau)}$ belongs to $\MCE(\mu\tau, \nu\tau)$.

Now suppose that $\MCE(\mu\tau, \nu\tau) \not= \emptyset$ for all $\tau$. Using the
correspondence between ultrafilters and infinite paths of Remark~\ref{rmk:filters vs
paths} together with the topology on the space of infinite paths described in
\cite[Definition~2.4 and Lemma~2.6]{KumjianPask:NYJM00}, we see that the sets $Z(\lambda)
= \{S : S\text{ is an ultrafilter and }\lambda \in S\}$ are a basis of compact open sets
for a Hausdorff topology on the set of all ultrafilters. Fix an ultrafilter $S$ with
$s(\mu) \in S$. For each $\tau \in S$, we have $\MCE(\mu\tau, \nu\tau) \not= \emptyset$,
and hence $Z(\mu\tau) \cap Z(\nu\tau) \not= \emptyset$. Since the  $Z(\mu\tau) \cap
Z(\nu\tau)$ are compact and decreasing with respect to the partial ordering $\leq$ on
$S$, we deduce that $\bigcap_{\tau \in S} Z(\mu\tau) \cap Z(\nu\tau) \not= \emptyset$;
say $T \in \bigcap_{\tau \in S} Z(\mu\tau) \cap Z(\nu\tau)$. For $\tau \le \tau' \in S$,
we have $S' \cap \Lambda^{d(\mu\tau)} = \{\mu\tau\}$ for all $S' \in Z(\mu\tau')$. Hence
$T \cap \Lambda^{d(\mu\tau)} \subseteq \{\mu\tau\}$ for all $\tau \in S$. Since $T \cap
\Lambda^n \not= \emptyset$ for all $n$, we deduce that $T \cap \Lambda^{d(\mu\tau)} =
\{\mu\tau\}$ for all $\tau \in S$. So $\ell_\mu(S) \subseteq T$, and since $\ell_\mu(S)$
is an ultrafilter, the two are equal. Symmetry gives $\ell_\nu(S) = T$ as well, so
$\ell_\mu(S) = \ell_\nu(S)$.
\end{proof}

\begin{prop}\label{prp:norm-decreasing}
Let $\Lambda$ be a row-finite $k$-graph with no sources. Let $c \in Z^2(\ZZ^k, \TT)$ and
suppose that $(cc^*)|_{\Per(\Lambda)}$ is nondegenerate. Suppose that $t$ is a
Cuntz-Krieger $(\Lambda, c \circ d)$-family with each $t_v$ nonzero. Suppose that $a :
\Hper\Lambda \times \Hper\Lambda \to \CC$ is finitely supported. Then
\[
\Big\|\sum_{\mu,\nu \in \Lambda} a_{\mu,\nu} t_\mu t^*_\nu\Big\|
    \ge \Big\|\sum_{\mu \sim \nu} a_{\mu,\nu} t_\mu t^*_\nu\Big\|.
\]
\end{prop}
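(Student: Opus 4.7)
The plan is to reduce the inequality to a norm comparison inside the direct sum of noncommutative tori identified in Proposition~\ref{prp:rotation iso}, where injectivity of the embedding will give equality. The first step is a gauge-averaging: let $\Per(\Lambda)^{\perp} \subseteq \TT^k$ denote the annihilator of $\Per(\Lambda)$, and average the gauge action $\gamma^{c \circ d}$ over this compact subgroup to obtain a conditional expectation $\Phi_{\Per}$ of $C^*(t)$ that satisfies $\Phi_{\Per}(t_\mu t^*_\nu) = t_\mu t^*_\nu$ when $d(\mu) - d(\nu) \in \Per(\Lambda)$ and vanishes otherwise. Because $\mu \sim \nu$ forces $d(\mu) - d(\nu) \in \Per(\Lambda)$, the element $A_\sim := \sum_{\mu \sim \nu} a_{\mu,\nu} t_\mu t^*_\nu$ is fixed by $\Phi_{\Per}$; since $\Phi_{\Per}$ is norm-decreasing, we may replace $a$ by its restriction to pairs with $d(\mu) - d(\nu) \in \Per(\Lambda)$.

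The second step is to separate, within this restricted sum, the $\sim$-related pairs from those with $d(\mu) - d(\nu) \in \Per(\Lambda)$ but $\mu \not\sim \nu$. For each non-$\sim$ pair $(\mu,\nu)$ in the support, Lemma~\ref{eq:sim vs MCE} supplies $\tau_{\mu,\nu} \in s(\mu)\Lambda$ with $\MCE(\mu \tau_{\mu,\nu}, \nu \tau_{\mu,\nu}) = \emptyset$. Using the finiteness of the support together with row-finiteness and the factorisation property, one can find, for each source-vertex $v$ appearing in the support, a finite subset $E_v \subseteq v\Lambda^{p_v}$ at some common large degree $p_v$ such that the projection
\[
P := \sum_v \sum_{\sigma \in E_v} t_\sigma t^*_\sigma
\]
compresses every non-$\sim$ summand to zero, while for $\sim$-pairs $(\mu,\nu)$ the Cuntz-Krieger relation~\eqref{eq:rfns CK} and the bijections $\theta_{m,n}$ on $\Hper\Lambda$ let one rewrite $P(t_\mu t^*_\nu)P$ as a controlled sum of $\sim$-related generators lying inside the image of the embedding $\bigoplus_\lambda \rho^\lambda_t$ from Proposition~\ref{prp:rotation iso}.

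Contractivity of compression then gives $\|A\| \geq \|PAP\|$, and tracking $PAP$ through the injection of Proposition~\ref{prp:rotation iso} (whose injectivity is where the nondegeneracy of $(cc^*)|_{\Per(\Lambda)}$ is used) yields $\|PAP\| \geq \|A_\sim\|$, completing the argument. The main obstacle lies in the second step: annihilating the non-$\sim$ summands follows fairly directly from Lemma~\ref{eq:sim vs MCE} via a uniformisation argument over the finite support, but verifying that the compression reconstructs the $\sim$-summands without collapse requires careful bookkeeping of the cocycle-phase factors and of how extensions of paths interact with the $\sim$-relation inside $\Hper$.
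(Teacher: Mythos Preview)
Your first step does not work: $t$ is an \emph{arbitrary} Cuntz-Krieger $(\Lambda, c\circ d)$-family with $t_v \ne 0$, and there is no reason for $C^*(t)$ to carry a gauge action, so you cannot average $\gamma^{c\circ d}$ there to produce $\Phi_{\Per}$. (Working in the universal algebra instead does not help, since the canonical map $C^*(\Lambda,c\circ d)\to C^*(t)$ is not known to be injective --- indeed, that is essentially what the proposition is used to establish.) Fortunately this step is also unnecessary: pairs with $d(\mu)-d(\nu)\notin\Per(\Lambda)$ are in particular non-$\sim$ pairs, so they are handled by the same compression argument you describe in your second step.

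The remainder of your sketch is in the right spirit but omits the key reductions the paper uses to make the compression work cleanly. Rather than building a projection $P$ assembled from many $E_v$'s at several source vertices, the paper first uses~\eqref{eq:rfnsCK consequence} to arrange that every $\mu$ in the support has a fixed degree $N$; the projections $t_\mu t_\mu^*$ for $\mu\in\Lambda^N$ are then mutually orthogonal, so the $\sim$-sum decomposes as a direct sum indexed by $\mu\in\Lambda^N$, and it suffices to match the norm at a single summand $\mu_{\max}$. One then finds a \emph{single} tail $\tau$ at $v_{\max}=s(\mu_{\max})$ killing all the non-$\sim$ terms simultaneously, and compresses by $\sum_{\lambda}t_{\lambda\tau}t_{\lambda\tau}^*$. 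The bookkeeping you flag as the main obstacle is then handled by applying Proposition~\ref{prp:rotation iso} twice (with $F=\{\mu_{\max}\}$ and $F=\{\mu_{\max}\tau\}$) and intertwining via the automorphism $\alpha_l$ of $A(c|_{\Per(\Lambda)})$ given by $U_m\mapsto c(m,l)U_m$, which absorbs the cocycle phases $c(N,l)\overline{c(d(\nu),l)}$ exactly. Your multi-vertex $P$ would make this last step considerably harder, since the phases at different vertices need not be absorbed by a single torus automorphism.
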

\begin{proof}
Again as discussed at the beginning of the section, we may assume that $c$ is a
bicharacter.

Let $N = \bigvee_{a_{\mu,\nu} \not= 0} d(\mu)$. Using the Cuntz-Krieger relation, we can
rewrite
\[
\sum_{\mu,\nu} a_{\mu,\nu} t_\mu t^*_\nu
	= \sum_{\mu,\nu}\sum_{\mu\mu' \in \Lambda^N} a_{\mu,\nu}
        c(\mu,\mu')\overline{c(\nu,\mu')} t_{\mu\mu'} t^*_{\nu\mu'};
\]
so we may assume without loss of generality that $a_{\mu,\nu} \not= 0$ implies $\mu \in
\Lambda^N$. Since the $t_\mu t^*_\mu$ for $\mu \in \Lambda^N$ are mutually orthogonal,
and since $\mu \sim \nu$ implies $t_\mu t^*_\mu = t_\nu t^*_\nu$, we have $\clsp\{t_\mu
t^*_\nu : \mu \in \Lambda^N, \nu\sim \mu\} = \bigoplus_{\mu \in \Lambda^N} \clsp\{t_\mu
t^*_\nu : \mu \sim \nu\}$, and so $\big\|\sum_{\mu \sim \nu} a_{\mu,\nu} t_\mu
t^*_\nu\big\| = \max_{\mu \in \Lambda^N} \big\|\sum_{\nu \sim \mu} a_{\mu,\nu} t_\mu
t^*_\nu\big\|$.

Choose $\mu_{\max}$ such that
\[
\Big\|\sum_{\nu \sim \mu_{\max}} a_{\mu_{\max},\nu} t_{\mu_{\max}} t^*_\nu\Big\|
    \ge \Big\|\sum_{\nu' \sim \mu'} a_{\mu',\nu'} t_{\mu'} t^*_{\nu'}\Big\|
\]
for all $\mu' \in \Lambda^N$. Let $v_{\max} = s(\mu_{\max})$. We claim that if $\mu,\nu
\in \Lambda$ satisfy $s(\mu) = s(\nu) = v_{\max}$ but $\mu \not\sim \nu$, then there
exists $\tau \in v_{\max}\Lambda$ such that $\MCE(\mu\tau, \nu\tau) = \emptyset$. Indeed,
Lemma~\ref{eq:sim vs MCE} implies that either there exists $\tau \in
v_{\max}\Lambda^{d(\mu) \vee d(\nu) - d(\mu)}$ such that $\mu\tau \not\in \MCE(\mu,\nu)$
or there exists $\tau \in v_{\max}\Lambda^{d(\mu)\vee d(\nu) - d(\nu)}$ such that
$\nu\tau \not\in \MCE(\mu,\nu)$. We consider the first case; the second is symmetric. We
have $\mu\tau \in \Lambda^{d(\mu) \vee d(\nu)}$, and so $\mu\tau = \alpha\alpha'$ for
some $\alpha \in \Lambda^{d(\nu)} \setminus\{\nu\}$. In particular, factoring $\mu\tau =
\eta\zeta$ with $d(\eta) = d(\nu)$, we have $\eta \not= \nu$, and so
$\MCE(\mu\tau,\nu\tau) = \emptyset$. An induction using that $\Lambda$ has no sources now
shows that there exists $\tau \in v_{\max}\Lambda$ such that $d(\tau) > d(\mu) \vee
d(\nu)$ whenever $a_{\mu,\nu} \not= 0$ and $\MCE(\mu\tau,\nu\tau) = \emptyset$ whenever
$s(\mu) = s(\nu) = v_{\max}$ and $\mu \not\sim \nu$.

Let $l := d(\tau)$. For $\mu,\nu \in F$ with $a_{\mu,\nu} \not= 0$, we have
\begin{align*}
\sum&_{\lambda \in Fv_{\max} \cap \Lambda^N} t_{\lambda\tau} t_{\lambda\tau}^*
		t_\mu t^*_\nu t_{\lambda\tau} t_{\lambda\tau}^* \\
	&= \begin{cases}
		c(N, l)\overline{c(d(\nu), l)} t_{\mu\tau}t^*_{\nu\tau} t_{\mu\tau} t_{\mu\tau}^*
			&\text{ if $s(\mu) = v_{\max}$ and $\mu \sim \nu$}\\
		0 &\text{ otherwise.}
	\end{cases}
\end{align*}
If $\mu \sim \nu$ then $\mu\tau \sim \nu\tau$ giving $t_{\mu\tau} t_{\mu\tau}^* =
t_{\nu\tau} t_{\nu\tau}^*$, and hence $t^*_{\nu\tau} t_{\mu\tau} t_{\mu\tau}^* =
t^*_{\nu\tau}$. That is,
\begin{align*}
\sum&_{\lambda \in Fv_{\max} \cap \Lambda^N} t_{\lambda\tau} t_{\lambda\tau}^*
		t_\mu t^*_\nu t_{\lambda\tau} t_{\lambda\tau}^*\\
	&= \begin{cases}
		c(N, l)\overline{c(d(\nu), l)} t_{\mu\tau}t^*_{\nu\tau}
			&\text{ if $s(\mu) = v_{\max}$ and $\mu \sim \nu$}\\
		0 &\text{ otherwise.}
	\end{cases}
\end{align*}
Since $c$ is a bicharacter, the map $m \mapsto c(m,l)$ is a homomorphism of
$\Per(\Lambda)$ into $\TT$. The universal property of $A(c|_{\Per(\Lambda)})$ ensures
that it admits an automorphism $\alpha_l$ such that $\alpha_l(U_m) = c(m,l)U_m$ for all
$m \in \Per(\Lambda)$. An application of Proposition~\ref{prp:rotation iso} with $F =
\{\mu_{\max}\tau\}$ gives an injective homomorphism $\rho_1 : A(c|_{\Per(\Lambda)}) \to
C^*(t)$ which carries $U_{N-d(\nu)}$ to $t_{\mu_{\max}\tau} t^*_{\nu\tau}$ whenever $\nu
\sim \mu_{\max}$; so $\rho_1 \circ \alpha_l$ carries $U_{N-d(\nu)}$ to $\sum_{\lambda \in
Fv_{\max} \cap \Lambda^N} t_{\lambda\tau} t_{\lambda\tau}^*$.
Proposition~\ref{prp:rotation iso} applied to $F = \{\mu_{\max}\}$ gives an injective
homomorphism $\rho_2 : A(c|_{\Per(\Lambda)}) \to C^*(t)$ which carries $U_{N- d(\nu)}$ to
$t_{\mu_{\max}} t^*_{\nu}$ whenever $\nu \sim \mu_{\max}$. Since these homomorphisms are
injective, they are isometric, and so the map
\[
\rho_2 \circ \alpha_l^{-1} \circ \rho_1^{-1} : \clsp\{t_{\mu_{\max}\tau} t^*_{\nu\tau} :
\nu \sim \mu_{\max}\} \to \clsp\{t_{\mu_{\max}} t^*_\nu : \nu \sim \mu_{\max}\}
\]
is isometric and carries spanning elements to spanning elements. Since the
$t_{\lambda\tau} t^*_{\lambda\tau}$ are mutually orthogonal projections, the map $a
\mapsto \sum_{\lambda} t_{\lambda\tau} t^*_{\lambda\tau} a t_{\lambda\tau}
t^*_{\lambda\tau}$ is norm-decreasing, and so we have
\begin{align*}
\Big\|\sum_{\mu \sim \nu}& a_{\mu,\nu} t_\mu t^*_\nu\Big\| \\
	&= \Big\|\sum_{\nu \sim \mu_{\max}} a_{\mu_{\max},\nu} t_{\mu_{\max}} t^*_\nu\Big\| \\
	&= \Big\|\rho_2 \circ \alpha_l^{-1} \circ \rho_1^{-1}\Big(\sum_{\lambda \in Fv_{\max} \cap \Lambda^n} t_{\lambda\tau} t_{\lambda\tau}^* \Big(\sum_{\nu \sim \mu_{\max}} a_{\mu_{\max},\nu} t_{\mu_{\max}} t^*_\nu\Big) t_{\lambda\tau} t_{\lambda\tau}^*\Big)\Big\|\\
	&= \Big\|\sum_{\lambda \in Fv_{\max} \cap \Lambda^n} t_{\lambda\tau}
t_{\lambda\tau}^*
			\sum_{\mu,\nu \in \Lambda} a_{\mu,\nu} t_\mu t^*_\nu t_{\lambda\tau} t_{\lambda\tau}^*\Big\| \\
	&\le \Big\|\sum_{\mu,\nu \in \Lambda} a_{\mu,\nu} t_\mu t^*_\nu\|,
\end{align*}
as required.
\end{proof}

\begin{proof}[Proof of Theorem~\ref{thm:simple}]
Since $\Lambda$ is cofinal, the saturation of $\Hper$ is all of $\Lambda^0$. Hence
Lemma~\ref{lem:ideal sets} implies that $P_{\Hper} C^*(\Lambda, c \circ d) P_{\Hper}$ is
a full corner of $C^*(\Lambda, c \circ d)$. So it suffices to show that $P_{\Hper}
C^*(\Lambda, c \circ d) P_{\Hper}$ is simple. Let $\Gamma = \Hper \Lambda$. The
gauge-invariant uniqueness theorem gives a canonical isomorphism $C^*(\Gamma, c \circ d)
\cong P_{\Hper} C^*(\Lambda, c \circ d) P_{\Hper}$. So it suffices to show that
$C^*(\Gamma, c \circ d)$ is simple.

Suppose $\pi$ is a homomorphism $C^*(\Gamma, c\circ d) \to B$, and let $s := \{s_\lambda
: \lambda \in \Gamma\}$ be the universal Cuntz-Krieger $(\Gamma, c \circ d)$-family.
First suppose that $\pi(s_v) = 0$ for some $v$. Then $\ker(\pi)$ contains the
gauge-invariant ideal generated by $s_v$, which in turn, by Theorem~\ref{thm:g-i ideals},
contains $s_w$ for every $w$ in the saturated hereditary set generated by $v$. Since
$\Gamma$ is cofinal it follows that $\pi(s_w) = 0$ for all $w$, and then $\pi(s_\mu
s^*_\nu) = \pi(s_{r(\mu)} s_\mu s^*_\nu) = 0$ for all $\mu,\nu$, and hence $\pi = 0$. Now
suppose that $\pi(s_v) \not = 0$ for all $v$; we must show that $\pi$ is injective. Let
$t_\mu := \pi(s_\mu)$ for all $\mu$. Since $\Hper$ is all of $\Gamma^0$,
Proposition~\ref{prp:norm-decreasing} implies that $\big\|\sum a_{\mu,\nu} t_\mu
t^*_\nu\big\| \ge \big\|\sum_{\mu\sim\nu} a_{\mu,\nu} t_\mu t^*_\nu\big\|$ for all
finitely supported $a$, and so there is a norm-decreasing linear map $\Psi : C^*(t) \to
\clsp\{t_\mu t^*_\nu : \mu \sim \nu\}$ such that $\Psi(t_\mu t^*_\nu) = t_\mu t^*_\nu$ if
$\mu \sim \nu$ and is zero otherwise. The same argument gives a norm-decreasing linear
map $\Phi$ from $C^*(\Gamma, c\circ d)$ to $\clsp\{s_\mu s^*_\nu : \mu \sim \nu\}$. This
is faithful on positive elements because the faithful conditional expectation $\Theta$ of
Proposition~\ref{prp:expectation} satisfies $\Theta = \Theta \circ \Psi$. Clearly $\pi
\circ \Phi = \Psi \circ \pi$. Fix a finite linear combination $\sum_{\mu \sim \nu}
a_{\mu,\nu} s_\mu s^*_\nu$. Using the Cuntz-Krieger relation as in the proof of
Proposition~\ref{prp:norm-decreasing}, we may assume that there exists $n \in \NN^k$ such
that $a_{\mu,\nu} \not= 0$ implies that $\mu \in \Lambda^n$. Applying
Proposition~\ref{prp:rotation iso} to the Cuntz-Krieger $(\Gamma, c\circ d)$-families $s$
and $t$ and with $F = \{\mu : a_{\mu,\nu} \not= 0\}$ and composing the resulting
homomorphisms, we obtain an isometric map from $C^*(\{s_\mu s^*_\nu : \lambda \in F,
\mu\sim \lambda \sim \nu\})$ to $C^*(\{t_\mu t^*_\nu : \lambda \in F, \mu \sim \lambda
\sim \mu\})$ which carries each $s_\mu s^*_\nu$ to $t_\mu t^*_\nu$. Hence
$\big\|\sum_{\mu\sim\nu} a_{\mu,\nu} s_\mu s^*_\nu\big\| = \big\|\sum_{\mu\sim\nu}
a_{\mu,\nu} t_\mu t^*_\nu\big\|$. Since $a$ was arbitrary, we deduce that
$\pi|_{\clsp\{s_\mu s^*_\nu : \mu \sim \nu\}}$ is injective. So Lemma~\ref{lem:usual
argument} shows that $\pi$ is injective.
\end{proof}

\Addresses
\end{document}